\date{\today}
\newcommand{\bbD}{{\mathbb{D}}}
\newcommand{\bbE}{{\mathbb{E}}}
\newcommand{\bbR}{{\mathbb{R}}}
\newcommand{\bbZ}{{\mathbb{Z}}}
\newcommand{\bbC}{{\mathbb{C}}}
\newcommand{\bbT}{{\mathbb{T}}}
\newcommand{\cF}{{\mathcal{F}}}
\newcommand{\cH}{{\mathcal{H}}}
\newcommand{\fF}{{\mathfrak{F}}}
\newcommand{\ff}{{\mathfrak{f}}}
\newcommand{\e}{{\epsilon}}
\newcommand{\z}{\zeta}
\renewcommand{\Re}{\text{\rm Re}\,}
\renewcommand{\Im}{\text{\rm Im}\,}
\allowdisplaybreaks \numberwithin{equation}{section}
\newtheorem{theorem}{Theorem}[section]
\newtheorem{lemma}[theorem]{Lemma}
\newtheorem{proposition}[theorem]{Proposition}
\newtheorem{corollary}[theorem]{Corollary}
\theoremstyle{definition}
\newtheorem{definition}[theorem]{Definition}
\newtheorem{remark}[theorem]{Remark}
\def\be{\begin{equation}}
\def\ee{\end{equation}}
\def\bea{\begin{eqnarray}}
\def\eea{\end{eqnarray}}
\def\bean{\begin{eqnarray*}}
\def\eean{\end{eqnarray*}}
\def\restr#1{\,\vrule\,\lower1ex\hbox{$#1$}}
\def\a{\alpha}
\def\b{\beta}
\def\e{\epsilon}
\def\g{\gamma}
\def\G{\Gamma}
\def\l{\lambda}
\def\L{\Lambda}
\def\o{\omega}
\def\O{\Omega}
\def\t{\theta}
\def\z{\zeta}
\title
{Martin Functions of Fuchsian Groups and \\ Character Automorphic Subspaces of the Hardy Space in the Upper Half Plane}
\author{A. Kheifets and P. Yuditskii\thanks{Supported by the Austrian Science Fund FWF, project no: P29363-N32.}}
\begin{document}

\maketitle

\begin{center}
\textit{Dedicated to our teacher Prof. V. E. Katsnelson
\footnote{Following the Russian tradition, we use the initials V.E. for Victor Emmanuilovich. V.E. was the most unusual and therefore the most attractive person among professors in the math department for students of our generation. The method he used to bring us in mathematics was also very much unusual for our time and our country: V.E. took us (four first year Master students) to a REAL mathematical conference. Actually, it was a school, but definitely of the highest conference level. At this school, we were learning the J-theory for two weeks at least 6 hours a day. V.E. was one of the lecturers, highly enthusiastic. We studied the theory together with the most prominent professors of our department. At the lunchtime, during a ski trip, or at the night lectures we were able to meet the authors of practically all popular textbooks of that time. V.E. was our guide to this new world. If indeed a personality is completely determined by the first 3 years of our life, our mathematical personalities definitely were determined by these first two weeks of our mathematical childhood.}
on the occasion of his 75-th birthday}
\end{center}

\begin{abstract}
We establish exact conditions
for non triviality of all subspaces of the standard Hardy space
 in the upper half plane, that consist of the character automorphic functions with respect to the action of a discrete subgroup of $SL_2(\bbR)$. Such spaces are the natural objects in the context of the spectral theory of almost periodic differential operators and in the asymptotics of the approximations by  entire functions.
A naive idea: it should be completely parallel to the celebrated Widom characterization for Hardy spaces on Riemann surfaces with a minor modification, namely, one has to substitute the Green function of the domain with the Martin function. Basically, this is correct, but...

\end{abstract}
\section{Introduction}

Harold Widom discovered that the asymptotic behaviour of orthogonal polynomials associated with a system of curves in the complex plane can be expressed in terms of the \textit{reproducing kernels} of the Hardy spaces of \textit{character automorphic} functions on the complementary domain (containing infinity component) \cite{Widom}. Later on in \cite{Widom71} he found a condition that guaranties  non triviality of all these spaces on infinitely connected domains. Essentially this created a foundation for the most comprehensive currently available function theory on multiply connected domains (and Riemann surfaces) \cite{Hasu}.

In its turn the theory of character automorphic Hardy spaces appeared to be the most efficient tool in solving inverse spectral and scattering problems for ergodic, almost periodic difference/differential operators and for their perturbations, see e.g. \cite{SY,  VYis}, see also \cite{You}. We mention here that a certain reverse influence also
took place, see \cite{VY}.

Another broad field of research to be naturally mentioned here is the spectral theory of commuting non-self adjoint operators and the interpolation theory on the Riemann surfaces, see e.g. \cite{LKMV, BV, AV}.

Viewing the Hardy spaces on the Riemann surfaces in terms of the \textit{universal covering} is extremely convenient for analysts, see e.g. \cite{PF}. Under this approach we realize the corresponding Hilbert space on the Riemann surface as a subspace of the standard $H^2$ in the disc consisting of the functions automorphic (character automorphic with a prescribed character) with respect to the action of a certain Fuchsian group $\G$.
Effectively, an essential part of the book \cite{Hasu} can be substituted with a single paper \cite{Pom} by Christian Pommerenke if one uses this approach.

In this paper we give precise conditions for non triviality of all subspaces of the standard Hardy space
$\cH^2=H^2_{\bbC_+}$ on the upper half plane, see e.g. \cite[Lecture XI]{Nik}, that consist of the character automorphic functions with respect to the action of a discrete subgroup $\G$ of $SL_2(\bbR)$. Such spaces are natural in the context of the spectral theory of differential operators and in the asymptotics of approximations by  entire functions.

A naive idea: it should be completely parallel to the corresponding Widom characterization with a minor modification, namely, one has to substitute the Green function of the domain with the Martin function. Basically, this is correct ..., but, as we will see, one more condition (see condition (B) in our main Theorem \ref{mth-15oct01}) should be surprisingly added to Widom type condition (A) in this case.
Moreover, in Remark \ref{re110} we will show why the approach of \cite{Pom} cannot work here in principle without essential modification.

Going to the precise statement we will introduce some notations, recall definitions and some facts.
We restrict ourself to \textit{Denjoy domains} (complements to real closed sets), which are \textit{regular} in the sense of the potential theory.

Let $E=\bbR\setminus \cup_{j\in\bbZ}(a_j,b_j)$ be a closed subset of $\bbR$, $E\not=\bbR$, unbounded in the following sense
\begin{equation}\label{10sep0}
\forall N>0\ \exists \l_\pm\in E: \l_+>N\ \text{and}\  \l_-<-N.
\end{equation}
Regularity of $E$ means that there exists a positive harmonic function in $\O=\bbC\setminus E$ with the only logarithmic singularity at a point $\l_0\in\O$ that is continuous up to the boundary of $\Omega$ and vanishes there. This function is called the Green function and is denoted by $G(\l,\l_0)$.

One can give a parametric description of regular Denjoy domains in terms of the special conformal mappings that were introduced
by Akhiezer and Levin, see \cite{Lev}, and that are extensively used in the spectral theory, see \cite{Mar}; for a modern point of view see \cite{EYu}, in particular for the proofs of Theorem \ref{th11}, Propositions \ref{prop12} and \ref{prop17}.
Let
\begin{equation}\label{10sep3}
\Pi=\{\xi+i\eta, \eta>0, \xi\in(0,\pi)\}\setminus \cup_{j\not=0}\{\o_{j}+i\eta: \eta\in(0,h_{j})\},
\end{equation}
where
$\{(\o_{k},h_{k})\}_{k\not=0}$ is any collection of numbers such that
\begin{equation}\label{10sep1}
\o_{k}\in(0,\pi),\quad \o_{k}\not=\o_{j}\quad \text{for}\quad k\not=j,
\end{equation}
and
\begin{equation}\label{10sep2}
h_{k}>0,\quad \lim_{k\to\infty} h_{k}=0.
\end{equation}
Domains $\Pi$ of this type are called
the \textit{regular combs}.
\begin{theorem}\label{th11} Let $(a_0, b_0)\subset\bbR$ and $\l_*\in (a_0, b_0)$. Let
$\Pi$ be an arbitrary comb of the form \eqref{10sep3} - \eqref{10sep2}
with parameters
$$
(\o_{k},h_{k}),\quad{k\not=0},
$$
and let
$$
\t_{\l_*}:\bbC_+\to \Pi
$$
be  the conformal mapping of $\bbC_+$ onto $\Pi$,
normalized as follows
\be\label{05dec01}
\t_{\l_*}(\l_*)=\infty,\ \t_{\l_*}(b_0)=0,\ \t_{\l_*}(a_0)=\pi.
\ee
Then $\t_{\l_*}$
can be extended by continuity to the real axis  and the set $E:=\t_{\l_*}^{-1}([0,\pi])$ is regular.  Moreover, $\Im \t_{\l_*}(\l)$ can be extended to the domain $\O:=\bbC\setminus E$ as a single-valued function, and for this extension we have
\begin{equation}\label{10sep5}
\Im\t_{\l_*}(\l)=G(\l,\l_*),
\end{equation}
where $G(\l, \l_*)$ is the Green function of $\O$. Due to normalization \eqref{05dec01},
$\t_{\l_*}(\infty)\in(0,\pi)$. If it does not coincide with the base point of a slit, i.e., $\t_{\l_*}(\infty)\not=\o_j$,
$j\in\bbZ$, then the set $E$ has property \eqref{10sep0}.

Conversely, let $E$ be a regular set, let $(a_0,b_0)$ be a component of $\bbR\setminus E$ and let $\l_*\in(a_0,b_0)$. Then there exists a comb $\Pi_{\l_*}$ of the form \eqref{10sep3} - \eqref{10sep2}
with parameters
$$
(\o_{\l_*,k},h_{\l_*,k}),\quad{k\not=0},
$$
such that $E$ corresponds to the base $[0,\pi]$ for the conformal mapping
\footnote{Here $\mathbb C_+$ is considered as a subset of $\mathbb C\setminus E$.}
$
\t_{\l_*}:\bbC_+\to \Pi_{\l_*},
$
normalized as in \eqref{05dec01}. Moreover,
\eqref{10sep5} holds. If $E$ has property \eqref{10sep0}, then $\t_{\l_*}(\infty)$ does not coincide with the base point of a slit, i.e., $\t_{\l_*}(\infty)\not=\o_j$,
$j\in\bbZ$.
\end{theorem}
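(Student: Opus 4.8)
\medskip
\noindent\emph{Sketch of the argument.}
Both implications rest on two facts: every boundary arc of $\Pi$ across which the conformal map must be continued lies on a \emph{vertical} line, so Schwarz reflection applies; and $\partial\Pi$ is locally connected, so Carath\'eodory's theorem is available. For the direct part I would first observe that $\Pi$ is simply connected (deleting vertical slits issuing from the bottom edge of a strip cannot disconnect it), and that $\partial\Pi$ is locally connected: since $h_k\to0$, in any small disc about an accumulation point of the bases $\o_k$ only slits shorter than a prescribed $\varepsilon$ occur, so the piece of $\partial\Pi$ there stays connected through the base $[0,\pi]$. Hence there is a conformal map $\t_{\l_*}\colon\bbC_+\to\Pi$; the prime ends over $\infty$, $0$ and $\pi$ are simple, so the three conditions \eqref{05dec01} fix it uniquely among such maps, and by Carath\'eodory it extends to a homeomorphism of the prime-end compactifications. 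In particular $\t_{\l_*}$ extends continuously to $\bbR$, $E:=\t_{\l_*}^{-1}([0,\pi])$ is closed, and $\bbR\setminus E$ consists of the preimages of the two lateral rays of $\Pi$ (one interval, the one containing $\l_*$) and of the two-sided slits (one interval per slit).

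Next I would study $u:=\Im\t_{\l_*}$. On every component of $\bbR\setminus E$ the values of $\t_{\l_*}$ lie on a line $\{\Re w=c\}$, so $\t_{\l_*}$ continues analytically across it by $\t_{\l_*}(\bar\l)=2c-\overline{\t_{\l_*}(\l)}$, whence $u(\bar\l)=u(\l)$; extending $u$ to $\bbC_-$ by this even symmetry produces a \emph{single-valued} harmonic function on $\O=\bbC\setminus E$ --- single-valued although $\t_{\l_*}$ itself need not continue single-valuedly around $\l_*$, since only the imaginary part is kept. It is positive, continuous up to $E$ with $u|_E\equiv0$ (as $\t_{\l_*}(E)\subset[0,\pi]\subset\bbR$), and, using that near $\infty$ the comb coincides with the half-strip $\{0<\xi<\pi,\ \eta>M\}$, which $w\mapsto e^{iw}$ maps onto a half-disc at $0$, one obtains $u(\l)=-\log|\l-\l_*|+O(1)$ as $\l\to\l_*$. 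These are exactly the defining properties of the Green function, so $E$ is regular and $u=G(\cdot,\l_*)$, i.e.\ \eqref{10sep5}. Finally $\t_{\l_*}(\infty)$ lies in the image of the complementary arc (through $\infty$) carrying $E$, hence in $[0,\pi]$ or on a slit; the symmetry $G(\bar\l,\l_*)=G(\l,\l_*)$ (uniqueness of the Green function) rules out the non-real slit points, and a point of $(0,\pi)$ that is not a base $\o_j$ cannot be an endpoint of a gap through $\infty$ --- which would force $E$ bounded --- so then $\infty$ is approached from both sides by $E$ and \eqref{10sep0} holds.

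For the converse, regularity supplies $G(\cdot,\l_*)$, symmetric in $\l\mapsto\bar\l$; hence $\partial_\eta G=0$ on every gap while $G=0$ on $E$. Since $\bbC_+$ is simply connected, $G|_{\bbC_+}$ admits a single-valued harmonic conjugate $\widetilde G$, and I would set $\t_{\l_*}:=\widetilde G+iG$, choosing the additive constant so that $\widetilde G=0$ on $(\l_*,b_0)$ (then $\widetilde G=\pi$ on $(a_0,\l_*)$, because the strength-one logarithmic pole makes $\widetilde G$ jump by $\pi$ at $\l_*$). As $d\widetilde G=\partial_\eta G\,d\xi$ on $\bbR$, the Neumann condition forces $\widetilde G$ to be a constant $\o_j$ on each remaining gap $(a_j,b_j)$, where $\t_{\l_*}$ runs up and down the slit $\{\o_j+i\eta:0\le\eta\le h_j\}$ with $h_j:=\max_{(a_j,b_j)}G(\cdot,\l_*)$; on $E$, $\widetilde G$ is continuous and strictly increasing from $0$ to $\pi$ because for regular $E$ every nonempty relatively open subset of $E$ carries positive harmonic measure; and on $(a_0,b_0)$ the image is the two lateral rays together with $\infty$. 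Thus $\t_{\l_*}$ maps $\bbR$ bijectively (on prime ends) onto $\partial\Pi_{\l_*}$ of the comb with these parameters, and an argument-principle/degree computation promotes this to a conformal bijection $\bbC_+\to\Pi_{\l_*}$ with $\Im\t_{\l_*}=G$. The constraints \eqref{10sep1}--\eqref{10sep2} come from regularity: $E$ has no isolated point (a positive harmonic function on a punctured disc cannot tend to $0$ at the puncture), which with strict monotonicity of $\widetilde G$ on $E$ yields $\o_j\in(0,\pi)$ pairwise distinct; and $\{G\ge\varepsilon\}$ is compact in $\O$, hence meets only finitely many gaps, giving $h_j\to0$. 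The statement about $\t_{\l_*}(\infty)$ mirrors the direct case.

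\medskip
\noindent\emph{Where the difficulty lies.} The delicate points are, in the direct part, checking that the reflected function is single-valued on all of $\O$ and that the continuous boundary extension exists at all (local connectedness of $\partial\Pi$ from $h_k\to0$ alone); and, in the converse, extracting the admissible comb parameters \eqref{10sep1}--\eqref{10sep2} and proving surjectivity onto the base $[0,\pi]$. The latter is precisely where regularity of $E$ is indispensable --- through strict monotonicity of $\widetilde G$ on $E$, i.e.\ positivity of harmonic measure of relatively open subsets, and through the absence of isolated points --- and is the real heart of the theorem.
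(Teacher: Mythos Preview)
The paper does not actually prove Theorem~\ref{th11}: it is stated in the introduction with the remark ``for a modern point of view see \cite{EYu}, in particular for the proofs of Theorem~\ref{th11}, Propositions~\ref{prop12} and~\ref{prop17}.'' So there is no in-paper argument to compare against; the authors simply defer to Eremenko--Yuditskii.

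That said, your sketch follows precisely the line taken in the comb-function literature (Levin, Marchenko--Ostrovskii, Eremenko--Yuditskii): in the direct part, local connectedness of $\partial\Pi$ from $h_k\to0$ plus Carath\'eodory, then Schwarz reflection across each vertical boundary segment to extend $\Im\t_{\l_*}$ symmetrically to $\O$, and identification with the Green function via its characterising properties; in the converse, the analytic completion $\t_{\l_*}=\widetilde G+iG$ on $\bbC_+$ and reading off the comb from the boundary behaviour. This is the standard route and is what \cite{EYu} does.

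One step in your converse deserves a sharper justification. You claim that $\widetilde G$ is strictly increasing on $E$ ``because for regular $E$ every nonempty relatively open subset of $E$ carries positive harmonic measure.'' That implication is correct, but the reason you should give is potential-theoretic rather than measure-theoretic: regularity of $E$ means every boundary point is regular, hence no relatively open portion of $E$ can be polar; and a non-polar subset of the boundary always has positive harmonic measure. Equivalently, if $\widetilde G$ were constant on some $I\cap E$ with $I$ open, then $\t_{\l_*}$ would collapse this set to a single boundary point of $\Pi$, contradicting the fact that Carath\'eodory's extension (in the converse direction, once you know $\t_{\l_*}$ is conformal) is a homeomorphism of prime ends, or more directly contradicting the maximum principle applied to $G$ after reflecting across $I$. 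Your phrasing ``positivity of harmonic measure of relatively open subsets'' is the right intuition but, as stated, could be mistaken for an assertion about Lebesgue measure, which regularity alone does not control. With that caveat, your outline is sound and matches the referenced proof.
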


The function $\t_{\l_*}(\l)$  admits a Schwarz-Christoffel type representation (an infinite analogue of the conformal mapping onto a polygon).

\begin{proposition}\label{prop12}
Assume that $E$ is regular and that $\t_{\l_*}(\l)$ is the conformal mapping on the corresponding comb domain.
Let
\begin{equation*}
\mu_{\l_*,k}:=\t_{\l_*}^{-1}(\o_{\l_*,k}+ih_{\l_*,k})\in (a_k,b_k).
\end{equation*}
Then for $\l\in\O$
\begin{equation}\label{6aug1}
\t'_{\l_*}(\l)=\frac i{\l_*-\l}\frac{\sqrt{(\l_*-a_0)(\l_*-b_0)}}{\sqrt{(\l-a_0)(\l-b_0)}}\prod_{k\not=0}\frac{\l-\mu_{\l_*,k}}{\l_*-\mu_{\l_*,k}}\frac{\sqrt{(\l_*-a_k)(\l_*-b_k)}}{\sqrt{(\l-a_k)(\l-b_k)}}.
\end{equation}
In particular,
$\{\mu_{\l_*,k}\}_{k\not=0}$ is the complete list of the critical points of the function $G(\l,\l_*)$, that is, the points where $\nabla G(\l,\l_*)=0$.
\end{proposition}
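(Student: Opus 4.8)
Identity \eqref{6aug1} is an infinite Schwarz--Christoffel formula for the conformal map onto the comb $\Pi$, and I would prove it by matching the zeros, the poles and the boundary branching of its two sides. By Theorem \ref{th11} the function $\Im\t_{\l_*}$ extends to the single-valued harmonic function $G(\cdot,\l_*)$ on $\O$, so $\t'_{\l_*}:=2i\,\partial_\l G(\cdot,\l_*)$ is a single-valued meromorphic function on $\O$, agreeing on $\bbC_+$ with the derivative of the conformal map, holomorphic on $\O\setminus\{\l_*\}$, and with a simple pole at $\l_*$ whose principal part is forced by the normalization $G(\l,\l_*)=\log\tfrac1{|\l-\l_*|}+O(1)$, namely $\t'_{\l_*}(\l)=\tfrac i{\l_*-\l}+O(1)$ near $\l_*$. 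Since $E\subset\bbR$ and $\l_*\in\bbR$, the Green function is symmetric under conjugation, so $\t'_{\l_*}(\bar\l)=-\overline{\t'_{\l_*}(\l)}$; in particular $\t'_{\l_*}$ is purely imaginary on each gap $I_j=(a_j,b_j)$, where it equals $iG_x$ (the normal derivative $G_y$ vanishing by evenness in $y$), and its boundary values on $E$ from $\bbC_+$ are real and non-negative.

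The divisor of $\t'_{\l_*}$ on $\O$ and its boundary behaviour are now read off from $\t_{\l_*}$. The conformal homeomorphism $\t_{\l_*}:\bbC_+\to\Pi$ extends to $\O$ as a (in general multivalued) locally univalent map whose only branch points are simple ones over the slit tips $\o_{\l_*,j}+ih_{\l_*,j}$, attained at $\mu_{\l_*,j}$ ($j\ne0$) --- the induced covering folds precisely over the prime ends of interior angle $2\pi$; hence $\t'_{\l_*}$ is zero-free on $\O$ apart from a simple zero at each $\mu_{\l_*,j}$, in addition to its simple pole at $\l_*$. As for the boundary, the gap endpoints $a_j,b_j\in E$ are carried to prime ends of $\partial\Pi$ of interior angle $\pi/2$ --- $a_0\mapsto\pi$, $b_0\mapsto0$ are the bottom corners of the base, while for $j\ne0$ both $a_j$ and $b_j$ go to the base point $\o_{\l_*,j}$ of the $j$-th slit --- so by the classical local normal form of a conformal map at a vertex, $\t'_{\l_*}$ has square-root branching at each $a_j,b_j$; finally $\t_{\l_*}(\infty)\in(0,\pi)$ is an interior point of the base, not a slit base point (Theorem \ref{th11}, using \eqref{10sep0}), so $\t_{\l_*}$ is holomorphic at $\infty$ and $\t'_{\l_*}$ has a double zero there.

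Let $B(\l)$ be the right-hand side of \eqref{6aug1}, with $\sqrt{(\l-a_j)(\l-b_j)}$ the branch holomorphic on $\bbC\setminus[a_j,b_j]$ normalized by $\sqrt{(\l-a_j)(\l-b_j)}/\l\to1$ at $\infty$ and a fixed choice of the (purely imaginary) number $\sqrt{(\l_*-a_0)(\l_*-b_0)}$. The infinite product converges locally uniformly on $\O$: the $k$-th factor is $g_k(\l)/g_k(\l_*)$ with $g_k(\l)=\frac{\l-\mu_{\l_*,k}}{\sqrt{(\l-a_k)(\l-b_k)}}$, and $(\log g_k)'(\l)=\frac1{\l-\mu_{\l_*,k}}-\frac1{2(\l-a_k)}-\frac1{2(\l-b_k)}$ satisfies $|(\log g_k)'(\l)|\le |I_k|\,\dist(\l,I_k)^{-2}$; since the $I_k$ are pairwise disjoint subintervals of $\bbR$ with endpoints in $E$, on a compact subset of $\O$ all but finitely many $\dist(\l,I_k)$ are bounded below and for the far ones $\dist(\l,I_k)\gtrsim 1+|c_k|$ ($c_k$ the centre of $I_k$), so the sum is dominated, locally uniformly, by $\sum_k|I_k|(1+|c_k|)^{-2}<\infty$. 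With these conventions $B$ is single-valued meromorphic on $\O$, satisfies $B(\bar\l)=-\overline{B(\l)}$, has a simple pole at $\l_*$ with the same principal part as $\t'_{\l_*}$, simple zeros exactly at the $\mu_{\l_*,k}$, the same square-root branching at every $a_j,b_j$, and a double zero at $\infty$. Hence $q:=\t'_{\l_*}/B$ is holomorphic and zero-free on $\O$, holomorphic and non-zero at $\infty$, bounded near $\l_*$ and near each gap endpoint (the poles, resp.\ the square-root singularities, cancel), and satisfies $q(\bar\l)=\overline{q(\l)}$, hence is real on each gap and has real boundary values on $E$ coinciding from above and below. Granting that $q$ also stays bounded up to $E$, Schwarz reflection makes $q$ a bounded entire function, hence a constant, and the constant is $\lim_{\l\to\l_*}q(\l)=1$; this is \eqref{6aug1}. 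The last assertion then follows at once: the critical points of $G(\cdot,\l_*)$ in $\O$ are the zeros of $\partial_\l G=\tfrac1{2i}\t'_{\l_*}$, which by \eqref{6aug1} are precisely the simple points $\mu_{\l_*,k}$, $k\ne0$.

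The step I expect to be the real obstacle is the boundary comparison along $E$ --- showing that $q=\t'_{\l_*}/B$ stays bounded, equivalently extends continuously, across $E$. For irregular $E$ (a Cantor-type set, say) $|\nabla G|$, and with it $\t'_{\l_*}$, may be unbounded near accumulation points of the gap endpoints, and the only bound immediately available controls the logarithmic derivative of $q$ by $\dist(\cdot,E)^{-1}$, which is not integrable. The clean way around this is to first prove \eqref{6aug1} for the \emph{finite} combs $\Pi^{(N)}$ keeping only the slits with $0<|k|\le N$ --- there $E^{(N)}$ is a finite union of intervals and rays, the domain is finitely connected, and \eqref{6aug1} (with the product truncated and the truncated parameters $\mu_{\l_*,k}^{(N)},a_k^{(N)},b_k^{(N)}$) is the classical Schwarz--Christoffel identity, proved exactly by the reflection argument above --- and then to pass to the limit via Carath\'eodory kernel convergence $\Pi^{(N)}\to\Pi$, which yields $\t_{\l_*}^{(N)}\to\t_{\l_*}$ and $G^{(N)}\to G$ locally uniformly, hence $(\t_{\l_*}^{(N)})'\to\t'_{\l_*}$ on $\O$ and $\mu_{\l_*,k}^{(N)},a_k^{(N)},b_k^{(N)}\to\mu_{\l_*,k},a_k,b_k$. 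The remaining point, that the finite products $B^{(N)}$ converge to $B$, reduces to a uniform estimate on how much the near-by gap parameters move when the far slits are deleted.
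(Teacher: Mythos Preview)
The paper does not actually prove Proposition~\ref{prop12}: immediately before Theorem~\ref{th11} it states that the proofs of Theorem~\ref{th11} and Propositions~\ref{prop12} and~\ref{prop17} are to be found in \cite{EYu}. So there is no in-paper proof to compare against; your write-up is a self-contained argument in the spirit of that reference.

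Your strategy is the correct one --- matching the pole at $\l_*$, the simple zeros at the slit tips, the $-\tfrac12$ exponents at the gap endpoints, and the double zero at $\infty$, then arguing that the ratio $q$ extends to an entire bounded function --- and you have identified the genuine difficulty honestly: for a general regular $E$ (possibly Cantor-like) the direct reflection argument stalls because one does not have a priori control of $|\t'_{\l_*}|$ up to $E$. Your proposed remedy, proving \eqref{6aug1} first for finite combs (a classical Schwarz--Christoffel computation) and then passing to the limit by Carath\'eodory kernel convergence $\Pi^{(N)}\to\Pi$, is exactly the route taken in \cite{EYu}; the convergence of $(\t_{\l_*}^{(N)})'\to\t_{\l_*}'$ locally uniformly on $\O$ and of the gap data $a_k^{(N)},b_k^{(N)},\mu_{\l_*,k}^{(N)}$ follow from it, and your tail estimate $\sum_k|I_k|(1+|c_k|)^{-2}<\infty$ (valid because the gaps are disjoint bounded intervals) gives the uniform control needed to pass to the limit in the infinite product. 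One small wording issue: the bound $|(\log g_k)'(\l)|\le |I_k|\,\dist(\l,I_k)^{-2}$ is only meant for the \emph{far} gaps (those disjoint from the compact set), which is all you use; on a compact $K\subset\O$ the finitely many factors with $I_k$ meeting or near $K$ are handled individually. With that clarification the argument is complete.
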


\begin{definition}
A regular domain is said to be of the Widom type if
\begin{equation}\label{10sep7}
\sum_{\mu:\nabla G(\mu,\l_*)=0}G(\mu,\l_*)
=\sum_{k\ne 0} G(\mu_{\l_*, k}, \l_*)
<\infty.
\end{equation}
\end{definition}
\noindent
Note that, by \eqref{10sep5}, $G(\mu_{\l_*,k},\l_*)=h_{\l_*,k}$ and \eqref{10sep7} is the same as
\begin{equation}\label{10sep8}
\sum_{k\not=0} h_{*k}<\infty.
\end{equation}
Thus, all Denjoy domains of the Widom type are represented by the conformal mappings on the comb domains, where
\eqref{10sep2} should be substituted with a stronger condition \eqref{10sep8}.

According to the \textit{uniformization theorem} there exists an analytic function $\L(z)$ on the upper half plane $\bbC_+$ that sets a one to one correspondence between the domain $\O$ and the factor of $\bbC_+$ under the action of a discrete group $\G\subset SL_2(\bbR)$, that is,
$\L(z)\in\O,$ and
for every $\l\in\O$ there exists $z\in\bbC_+$ such that $\L(z)=\l$. Moreover,
$$
\L(\g(z))=\L(z),\ \text{where}\ \g(z)=\frac{\g^{11}z+\g^{12}}{\g^{21}z+\g^{22}}\ \text{for all}\  \g=
\begin{bmatrix}
\g^{11}&\g^{12}\\
\g^{21}&\g^{22}
\end{bmatrix}\in\G,
$$
and $\L(z_1)=\L(z_2)$ implies that there exists $\g\in\G$ such that
$
z_1=\g(z_2).
$

In terms of the universal covering the Green function $G(\l,\l_*)$ admits the following representation.
Let us fix $z_*$ such that $\Lambda(z_*)=\l_*$.
Consider\footnote{If $E$ is regular, then $\Gamma$ is of the convergent type. That is, the orbit of every point in $\mathbb C_+$ satisfies the Blaschke condition.}
\begin{equation}\label{11sep11}
g(z, z_*)=\prod\limits_{\gamma\in\Gamma}\frac{z-\gamma(z_*)}{z-\overline{\gamma(z_*)}}\ C_\gamma,\quad z\in\mathbb C_+,
\end{equation}
where $C_{1_\G}=1$ and for all $\gamma\ne 1_{\G}$
$$
C_\gamma =
\left|\frac{z_*-\gamma(z_*)}{z_*-\overline{\gamma(z_*)}}\right|
\frac{z_*-\overline{\gamma(z_*)}}{z_*-\gamma(z_*)}\ .
$$
 Then
 \begin{equation}\label{11sep2}
G(\L(z), \L(z_*))=-\ln|g(z, z_*)|.
\end{equation}
For this reason $g(z,z_*)$ is called the (complex) Green function of the group $\G$, see \cite{Pom}.
Combining \eqref{10sep5} and \eqref{11sep2}, we get
 \begin{equation}\label{27dec01}
\t_{\L(z_*)} ( \L(z))=-i\ln g(z, z_*).
\end{equation}
Therefore,
 \begin{equation*}
\t_{\L(z_*)}' ( \L(z))\cdot\L'(z)=-i\frac{g'(z, z_*)}{g(z, z_*)}.
\end{equation*}
From here we see that the critical points
$\{\mu_{\l_*,k}\}_{k\not=0}$ of the Green function $G(\l,\l_*)$ are images of the zeros of $g'(z, z_*)$ under $\L$.
Then Widom condition \eqref{10sep7} can be written in terms of $g(z, z_*)$ as follows
\be\label{27dec02}
\prod_{k\ne 0}|g(c_{z_*, k}, z_*)|>0,
\ee
where $c_{z_*, k}$ are zeros of $g'(z, z_*)$ in the fundamental domain of $\G$, which is the Blaschke condition on {\it all} the zeros of $g'(z, z_*)$ in the upper half plane.

By $\G^*$ we denote the group of the unimodular characters of $\G$, that is, the functions
$$
\a:\G\to \bbT
\quad
\text{such that}\quad
\a(\g_1\g_2)=\a(\g_1)\a(\g_2), \g_j\in\G.
$$
Note $g(z,z_*)$ is an example of the character automorphic function, that is, there exists $\b_*=\b_{g(\cdot,z_*)}\in\G^*$ such that
$$
g(\g(z),z_*)=\b_*(\g)g(z,z_*),
$$
respectively,
$$
|g(\g(z),z_*)|=|g(z,z_*)|.
$$

Passing by a linear fractional transformation from the unit disk $\bbD$ to the upper half plane $\bbC_+$,
we introduce the classical Hardy space $H^2$ of holomorphic functions on $\bbC_+$, with the norm
\begin{equation}\label{24oct1}
\|f\|^2=\|f\|^2_{H^2}=\int_{\bbR}|f(x)|^2 dm(x), \quad dm(x)=\frac{dx}{1+x^2}.
\end{equation}
\begin{definition}
For a fixed character $\a\in\G^*$ we define
\begin{equation*}
H^2(\a)=\{f\in H^2:\ f(\g(z))=\a(\g)f(z), \forall \g\in\G\}.
\end{equation*}
\end{definition}
The following statement is the Pommerenke version \cite{Pom} of the Widom theorem (recall, in this paper we discuss only Denjoy domains).
\begin{theorem}\label{TP}
Let $\O=\bbC\setminus E$ be a regular Denjoy domain and $\L:\bbC_+/\G\simeq \O$ be its uniformization.
The following conditions are equivalent
\begin{itemize}
\item[(i)] For every $\a\in\G^*$ the space $H^2(\a)$ contains a non constant function.
\item[(ii)] The derivative $g'(z,z_*)$ of the Green function  is a function of bounded characteristic in $\bbC_+$ (a ratio of two bounded holomorphic functions)
\item[(iii)] Widom condition \eqref{10sep7} $($equivalently \eqref{27dec02}$)$ holds.
\end{itemize}
\end{theorem}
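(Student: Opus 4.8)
The plan is to establish the equivalences by proving $\text{(ii)}\Leftrightarrow\text{(iii)}$ and $\text{(iii)}\Leftrightarrow\text{(i)}$. The first pair is analytic. By \eqref{27dec01}, differentiating gives $g'(z,z_*)=i\,g(z,z_*)\,\Psi(z)$ with $\Psi(z):=\t_{\l_*}'(\L(z))\,\L'(z)$; feeding in the Schwarz--Christoffel product \eqref{6aug1} of Proposition \ref{prop12} shows that $g'(\cdot,z_*)$ is holomorphic and zero-free in $\bbC_+$ apart from simple zeros precisely at the $\G$-orbits of the points $c_{z_*,k}$ with $\L(c_{z_*,k})=\mu_{\l_*,k}$: the simple poles of $\Psi$ over the $\G$-orbit of $z_*$ are cancelled by the zeros of $g$, and the band-edge singularities $(\l-a_k)^{-1/2},(\l-b_k)^{-1/2}$ of $\t_{\l_*}'$ lie on $E=\partial\O$, hence off $\bbC_+$. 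Since a nonzero function of bounded characteristic has a Blaschke-summable zero set, (ii) immediately forces \eqref{27dec02}, i.e.\ (iii); this is the easy half.

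For $\text{(iii)}\Rightarrow\text{(ii)}$, let $B$ be the Blaschke product over the zero set of $g'(\cdot,z_*)$, which is Blaschke-summable by (iii); then $h:=g'(\cdot,z_*)/B$ is holomorphic and zero-free in $\bbC_+$, and it suffices to exhibit a harmonic majorant for $\log|h|$. Using $\log|g(z,z_*)|=-G(\L(z),\l_*)\le 0$ (by \eqref{11sep2}) together with \eqref{6aug1} pulled back by $\L$, the ``bulk'' of $\Psi$ is uniformly controlled, while the factor $\prod_{k\ne0}\bigl|(\l-\mu_{\l_*,k})/\sqrt{(\l-a_k)(\l-b_k)}\bigr|$ evaluated at $\l=\L(z)$ contributes, after taking logarithms, a series of Green-type terms whose total mass is comparable to the Widom sum $\sum_{k\ne0}h_{*k}$ --- finite exactly by (iii). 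Hence $\log^+|h|$ has a harmonic majorant, so $h$, and therefore $g'(\cdot,z_*)=hB$, is of bounded characteristic. I expect this harmonic-majorant estimate --- the one place where the Widom \emph{sum}, not just the Blaschke condition, is used --- to be one of the two technical cores.

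For $\text{(iii)}\Rightarrow\text{(i)}$ I would use the standard reformulation of (i): for every $\a\in\G^*$ the reproducing kernel $k^\a(z_*,z_*)$ of $H^2(\a)$ is positive. For each $k\ne0$ the complex Green function $b_k$ of $\G$ with pole over $\mu_{\l_*,k}$ is a non-constant inner character automorphic function with $-\log|b_k|=G(\L(\cdot),\mu_{\l_*,k})$, $1-|b_k(z_*)|\asymp h_{*k}$, and character $\b_k\in\G^*$. The set $S\subseteq\G^*$ of characters carried by some non-constant bounded automorphic function is a subgroup (closed under products since characters multiply, and under inverses via the reflection symmetry of the Denjoy domain), it contains every $\b_k$, and hence is dense in $\G^*$. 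Because $\sum_k h_{*k}<\infty$, one shows $\inf_{\a\in\G^*}k^\a(z_*,z_*)>0$ --- e.g.\ by a normal-families argument on products of the $b_k$ --- so $S$ is also closed, hence $S=\G^*$, and (i) follows. Conversely, $\text{(i)}\Rightarrow\text{(iii)}$ by contraposition: a Szeg\H o/entropy-type identity expresses $\int_{\G^*}\log k^\a(z_*,z_*)\,dm_{\G^*}(\a)$ (the integral over normalized Haar measure on $\G^*$) as a constant minus a positive multiple of $\sum_k h_{*k}$, so if the Widom sum diverges the integral is $-\infty$, forcing $k^\a(z_*,z_*)=0$, i.e.\ $H^2(\a)=\{0\}$, on a set of positive measure, contradicting (i).

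The main obstacle is the block $\text{(iii)}\Leftrightarrow\text{(i)}$: both directions rest on the precise dictionary between $\G^*$, the gaps of $E$, and the critical points $\mu_{\l_*,k}$ of the Green function of $\O$ --- in particular the description of $\G^*$ as the torus of gap-phases and the identification of the $\b_k$ with its generators --- and on the fact that it is exactly the convergence of $\sum_k h_{*k}$ (equivalently \eqref{10sep7}) that makes the attendant infinite constructions (the products built from the $b_k$, the averaging over $\G^*$) non-degenerate rather than collapsing to constants. Everything else is bookkeeping with the Nevanlinna class and the explicit formulas \eqref{27dec01}, \eqref{6aug1}.
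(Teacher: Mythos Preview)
Your $\text{(ii)}\Rightarrow\text{(iii)}$ is correct and agrees with the paper. Note, however, that the paper reproves only $\text{(ii)}\Leftrightarrow\text{(iii)}$; the link with (i) is quoted from Pommerenke, so your reproducing-kernel/Szeg\H{o}-entropy sketch of $\text{(i)}\Leftrightarrow\text{(iii)}$ --- which is in the spirit of Widom's original argument rather than Pommerenke's --- has nothing here to be compared against.

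Your $\text{(iii)}\Rightarrow\text{(ii)}$ contains a genuine gap. The decomposition $g'=i\,g\,\Psi$ with $\Psi:=\t_{\l_*}'(\L)\cdot\L'$ is tautological: differentiating \eqref{27dec01} gives exactly $\Psi=-i\,g'/g$, so you have only rewritten $g'$ as itself. To feed in the Schwarz--Christoffel product \eqref{6aug1} you would have to separate $\t_{\l_*}'(\L(z))$ from $\L'(z)$; but \eqref{6aug1} is a formula in the $\l$-variable and tells you nothing about $\L'$. Producing a harmonic majorant for $\log^+|\L'|$ in $\bbC_+$ is nowhere addressed in your sketch, and it is not bookkeeping --- it is a problem of the same nature as the one you are trying to solve. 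Consequently the ``series of Green-type terms summing to the Widom total'' that you anticipate does not emerge from \eqref{6aug1} once the $\L'$ factor is taken into account.

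The paper's route avoids $\L'$ and the product \eqref{6aug1} entirely. It starts from the \emph{series} representation \eqref{16sep04} for $g'$ and proves the sharper $\G$-automorphic inequality
\[
\left|\frac{B(z)}{g'(z,z_*)}\right|\sum_{\g\in\G}\frac{|\g'(z)|}{|\g(z)-\bar z_*|^2}\le 1,\qquad z\in\bbC_+,
\]
whose left side is subharmonic (a sum of moduli of holomorphic functions) and $\G$-automorphic. The key device is the approximation of $\G$ by finitely generated subgroups $\G_n$ (Section~\ref{FZ}): for each $\G_n$ the corresponding automorphic subharmonic function descends to a \emph{finitely connected} quotient $\O_n$, is continuous up to the boundary, and equals $1$ there by the direct computation \eqref{30aug04}--\eqref{30aug05}; the maximum principle gives the inequality for $\G_n$. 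The Widom hypothesis \eqref{28aug01} is used only in the passage $n\to\infty$, through a dominated-convergence argument on the approximating Blaschke products $B^{(n)}$ (exploiting $|g_n|\ge|g|$, see \eqref{27dec03}, and that $c_{z_*,k}$ minimizes $|g(\cdot,z_*)|$ on the $k$-th semicircle). A separate step (Theorem~\ref{02sep04}, via Lemmas~\ref{02sep01} and~\ref{02sep06}) shows that $B/\bigl((z-\bar z_*)^2 g'\bigr)$ is in fact outer.
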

Let now $\cH^2$ be the standard Hardy space in the upper half plane, that is,
Smirnov class functions $f$ with finite norm
$$
\|f\|^2=\|f\|^2_{\cH^2}=\int_{\bbR}|f(x)|^2 dx.
$$
\begin{definition}
For a fixed character $\a\in\G^*$ we introduce
\begin{equation*}
\cH^2(\a)=\{f\in \cH^2:\ f(\g(z))=\a(\g)f(z), \forall \g\in\G\}.
\end{equation*}
\end{definition}

We express a similar property of non triviality of all $\cH^2(\a)$ spaces  in terms of the Martin function $M(\l)$ in $\O$ (associated to the infinity).

To be more precise, by $M(\l)$, $\l\in\O$, we denote the  symmetric Martin function with respect to the infinity, see e.g. \cite{Lev, EYu, BS}, and the references therein. That is,
$M(\l)$ is a positive harmonic in $\O$ function, continuous up to the boundary with the only exception at the infinity and vanishing at every finite point of the boundary. Symmetry means that
$$
M(\overline\lambda)=M(\lambda).
$$
Such a function is unique up to a positive constant factor.
It also admits a Schwarz-Christoffel type representation.
\begin{proposition}\label{prop17}
Assume that $\O$ is a regular Danjoy domain. All critical points $\mu_k$ of the symmetric Martin function are real, moreover
it has exactly one critical point in each gap
\begin{equation*}
\mu_{k}\in (a_k,b_k), \quad k\in\bbZ.
\end{equation*}
Then for $\l\in\bbC_+$ and a fixed normalization point  $\l_*\in(a_0,\mu_0)$
\begin{equation}\label{11sep7}
\t(\l):=i(\partial_x M)(\l_*)\int_{a_0}^\l\prod_{k\in\bbZ}\frac{\xi-\mu_{k}}{\l_*-\mu_{k}}\frac{\sqrt{(\l_*-a_k)(\l_*-b_k)}}{\sqrt{(\xi-a_k)(\xi-b_k)}}d\xi
\end{equation}
and $M(\l)=\Im\t(\l)$.
\end{proposition}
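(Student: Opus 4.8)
The proof runs parallel to that of Proposition~\ref{prop12}, with the Green function replaced by the symmetric Martin function $M$: I would represent $M$ as the imaginary part of a conformal map onto a comb, then read off the critical points and the Schwarz--Christoffel product from that map. To set up the map, note $\bbC_+\subset\O$ and $M>0$ is harmonic on the simply connected $\bbC_+$, so it has there a single-valued harmonic conjugate $v$; put $\t(\l):=v(\l)+iM(\l)$, a Herglotz function $\t:\bbC_+\to\bbC_+$ with $\Im\t=M$. The first step is to prove the Martin-function counterpart of Theorem~\ref{th11}: using that $M$ is the symmetric positive harmonic function vanishing on the finite boundary and tending to $+\infty$ at $\infty$, unique up to a positive factor --- equivalently, the minimal Martin kernel at $\infty$, so that in particular every superlevel set $\{M>c\}$ is connected --- one shows, exactly as for Theorem~\ref{th11}, that $\t$ extends continuously to $\overline{\bbC_+}$ and is a conformal bijection of $\bbC_+$ onto a slit half-plane
\[
\Pi_M=\{w:\Im w>0\}\setminus\bigcup_{k\in\bbZ}\{u_k+it:0<t\le M(\mu_k)\},
\]
with one slit over each gap $(a_k,b_k)$, the set $E$ mapping onto the base line and $\infty$ to $\infty$; here $u_k=\t(a_k)=\t(b_k)$ is the foot of the $k$-th slit and $\mu_k\in(a_k,b_k)$ is the preimage of its tip.

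Granting this, the location of the critical points follows. Extend $M$ to $\O$ by $M(\overline\l):=M(\l)$ and put $f(\l):=(\partial_xM)(\l)-i(\partial_yM)(\l)$; then $f$ is holomorphic and single-valued on $\O$, satisfies $f(\overline\l)=\overline{f(\l)}$, coincides with $-i\t'$ on $\bbC_+$, and its zeros are precisely the critical points of $M$. By the comb representation $\t'\ne0$ on $\bbC_+$, so $f\ne0$ on $\bbC_+$ and, by reflection, on $\bbC_-$; hence every critical point lies on $\bbR\cap\O=\bigcup_{k\in\bbZ}(a_k,b_k)$. On a gap $(a_k,b_k)$ the function $M$ is continuous, positive inside, and vanishes at the endpoints, so it attains an interior maximum, a critical point; and since under $\t$ the two edges of the gap map onto the two sides of a single slit with one tip, $M|_{(a_k,b_k)}$ is unimodal, so this critical point $\mu_k$ is the only one there. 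Thus the critical set of $M$ is exactly $\{\mu_k\}_{k\in\bbZ}$ with $\mu_k\in(a_k,b_k)$.

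Finally \eqref{11sep7} is obtained as \eqref{6aug1} was. The function $f=-i\t'$ is holomorphic and single-valued on $\O$, has a simple zero at each $\mu_k$ and no other zeros, and near a gap endpoint $a_k$ (resp.\ $b_k$) it behaves like $(\l-a_k)^{-1/2}$ (resp.\ $(\l-b_k)^{-1/2}$), because there the conformal map $\t$ has the square-root behaviour $\t(\l)-u_k\asymp(\l-a_k)^{1/2}$ characteristic of the foot of a slit; it has no other singularities on $\O$, and its behaviour at $\infty$ is governed by $M(\l)\to\infty$. One then verifies that the Akhiezer--Levin product
\be\label{plan-psi}
\Psi(\l):=i\,(\partial_xM)(\l_*)\prod_{k\in\bbZ}\frac{\l-\mu_k}{\l_*-\mu_k}\,\frac{\sqrt{(\l_*-a_k)(\l_*-b_k)}}{\sqrt{(\l-a_k)(\l-b_k)}}
\ee
converges locally uniformly on $\O$ (the logarithms of its factors are summable since $\mu_k\in(a_k,b_k)$ and $b_k-a_k\to0$) and has the same zeros, the same square-root singularities at the $a_k,b_k$, and the same value $i\,(\partial_xM)(\l_*)$ at $\l_*$ as $\t'$, the last point because $\partial_yM$ vanishes on the gaps, so $\t'(\l_*)=i\,(\partial_xM)(\l_*)$. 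Hence $\t'/\Psi$ is holomorphic, zero-free, and single-valued on $\O$, equals $1$ at $\l_*$, and --- once the singularities at the $a_k,b_k$ and the growth at $\infty$ are matched --- extends to a bounded zero-free function on the sphere, so $\t'\equiv\Psi$ by Liouville. Integrating from $a_0$, where $\t(a_0)=0$, yields \eqref{11sep7}, and $M=\Im\t$ by construction.

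I expect the main obstacle to be the first step: proving that $\t$ is univalent on $\bbC_+$ with exactly the comb image described. This is not automatic for a Herglotz function that is real on $E$; it genuinely uses the extremal (minimality/uniqueness) characterization of the symmetric Martin function, and its proof is the Martin-function analogue of Theorem~\ref{th11}, carried out by the conformal-mapping techniques of \cite{EYu}. A secondary difficulty is the convergence of \eqref{plan-psi} together with the concluding identification $\t'\equiv\Psi$, for which one needs uniform control of $\t'$ near the accumulation set of $\{a_k,b_k\}$ and at $\infty$.
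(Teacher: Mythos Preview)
The paper does not prove Proposition~\ref{prop17}; as announced just before Theorem~\ref{th11}, the proofs of Theorem~\ref{th11} and Propositions~\ref{prop12} and~\ref{prop17} are deferred to \cite{EYu}. Your outline is precisely the comb-function approach of that reference, so it coincides with what the paper invokes, and you have correctly located the two genuine difficulties (univalence of $\t$ with the slit-half-plane image, and the identification $\t'\equiv\Psi$ across the possibly infinite accumulation set of gap endpoints before Liouville can be applied).

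One concrete error, however: you justify convergence of the product \eqref{plan-psi} by ``$\mu_k\in(a_k,b_k)$ and $b_k-a_k\to0$''. The second assertion is neither assumed nor true in general; for instance $E=\bigcup_{n\in\bbZ}[2n,2n+1]$ is a regular Denjoy domain satisfying \eqref{10sep0} with every gap of length~$1$. What actually drives the convergence is that the gaps are disjoint subintervals of $\bbR$, so $\sum_k\int_{a_k}^{b_k}\frac{dx}{1+x^2}\le\pi$; a second-order expansion of the $k$-th factor about $1$ then shows its logarithm is $O\bigl((b_k-a_k)/(1+a_k^2)\bigr)$ once $\l$ and $\l_*$ lie in a fixed compact set away from $(a_k,b_k)$. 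This is the estimate carried out in \cite{EYu}.
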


Note that $\t(\l)$ also generates a conformal mapping of the upper half plane on a special comb domain \cite{EYu}. It can be extended to $\O$ as an (additive) character automorphic function. This can be described in terms of the uniformization: let $m(z)=\t(\L(z))$, then
\be\label{27dec06}
M(\L(z))=\Im m(z),\quad m(\g(z))=m(z)+\eta(\g),
\ee
where $\eta(\g)\in\bbR$, $\eta(\g_1\g_2)=\eta(\g_1)+\eta(\g_2).$
Similar to $g(z,z_*)$, the function $m(z)$ can be called the \textit{(symmetric) complex Martin function of the group} $\G$.

\medskip

Now we can state our main result.
\begin{theorem}\label{mth-15oct01}
Let $\O=\bbC\setminus E$ be a regular Denjoy domain and $\L:\bbC_+/\G\simeq \O$ be its uniformization.
The following conditions are equivalent
\begin{itemize}
\item[(i)] For every $\a\in\G^*$ the space $\cH^2(\a)\not=\{0\}$.
\item[(ii)]
\begin{itemize}
\item[($a$)]  The derivative $m'(z)$ of the  Martin function of the group $\G$ is a function of bounded characteristic;
\item[($b$)]  The Riesz-Herglotz measure correspondent to $m(z)$ is a pure point one.
\end{itemize}
\item[(iii)] The symmetric Martin function $M(\l)$ of the domain $\O$ possesses the following two properties
\begin{itemize}
\item [$(A)$] $\sum\limits_{j\in \bbZ} G(\mu_j,\l_*)<\infty$, where $\mu_j$ are the critical points of the {\em Martin function};
\item [$(B)$] $\lim\limits_{\eta\to+\infty}{M(i\eta)}/\eta>0$.
\end{itemize}
\end{itemize}
\end{theorem}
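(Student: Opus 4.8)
The plan is to prove the cycle of implications (iii) $\Rightarrow$ (ii) $\Rightarrow$ (i) $\Rightarrow$ (iii), using the conformal comb model of Proposition~\ref{prop17} as the main computational device and Theorem~\ref{TP} as the Widom-type input. The starting observation is that, unlike the Green function $g(z,z_*)$, the complex Martin function $m(z)$ is \emph{additively} character automorphic: $m(\g z)=m(z)+\eta(\g)$ with $\eta(\g)\in\bbR$. Hence $m'(z)$ is \emph{multiplicatively} character automorphic, $m'(\g z)=(\g'(z))^{-1}\,m'(z)\,\cdot$ (the cocycle from the chain rule), and $e^{i m(z)}$ is a character automorphic inner-type function whose modulus $e^{-M(\Lambda(z))}\le 1$. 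The natural candidate for a nonzero element of $\cH^2(\a)$ is a suitably normalized product
\[
f_\a(z)=\text{(character-correcting Blaschke-type factor)}\cdot \bigl(m'(z)\bigr)^{1/2}\,e^{i\,c\,m(z)},
\]
for an appropriate $c>0$; the factor $(m'(z))^{1/2}$ supplies exactly the Jacobian needed so that $\int_{\bbR}|f_\a|^2\,dx$ becomes (up to the automorphy correction) an integral over the comb, which is finite because the comb has finite "width" in the relevant sense. This is the upper-half-plane analogue of Widom's reproducing-kernel construction, and condition $(a)$ is precisely what makes $(m'(z))^{1/2}$ a legitimate (bounded-characteristic, Smirnov) factor.

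For (iii) $\Rightarrow$ (ii): Condition $(A)$ says the zeros of $m'(z)$ — which sit over the critical points $\mu_j$ — satisfy the Blaschke condition in $\bbC_+$ (here one uses $G(\mu_j,\l_*)=-\ln|g(\mu_j,z_*)|$ exactly as in the passage from \eqref{10sep7} to \eqref{27dec02}), so $m'(z)$ has a Blaschke product in its denominator/numerator structure and, together with the boundedness of $e^{im(z)}$ coming from $M\ge 0$, one reads off that $m'(z)$ is of bounded characteristic, i.e. $(a)$. Condition $(B)$, $\lim_{\eta\to\infty}M(i\eta)/\eta>0$, translates via $M(\l)=\Im\t(\l)$ and \eqref{11sep7} into a statement about the Riesz--Herglotz representation of $m$: the Herglotz function $m(z)$ has a nontrivial point mass "at the base vertex of the comb lying over infinity," and more precisely its representing measure on $\bbR$ is purely atomic. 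This is the point where the comb geometry of Proposition~\ref{prop17} must be used carefully: the slits $\{\o_k+i\eta:\eta\in(0,h_k)\}$ force the boundary values of $\Im m$ to be a sum of point masses (one atom per slit base, plus the atom at $\t(\infty)$), and $(B)$ is equivalent to the atom at $\t(\infty)$ being present with positive weight, which in turn is what prevents the measure from having an absolutely continuous part. So $(A)\Rightarrow(a)$ and $(B)\Leftrightarrow(b)$, once the comb bookkeeping is done.

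For (ii) $\Rightarrow$ (i): Given $(a)$ and $(b)$, form $f_\a$ as above. Bounded characteristic of $m'$ plus the Smirnov property (the domain is regular, so $\G$ is of convergent type and all the relevant inner--outer factorizations are Smirnov) gives that $(m'(z))^{1/2}e^{i c\, m(z)}$ is a Smirnov function; the character-correcting factor can be built from the Green functions $g(z,z_\ast)$ as in the Widom/Pommerenke scheme so that the product lands in $\cH^2(\a)$ for the prescribed $\a$. Finiteness of the norm is the computation $\int_{\bbR}|f_\a(x)|^2\,dx \asymp \int_{\partial\Pi}|\cdots|\,|dw|<\infty$, where the purely atomic nature of the Herglotz measure (condition $(b)$) is exactly what guarantees the boundary integral converges — if the measure had an a.c.\ part, the candidate function would fail to be in $\cH^2$ (this is the "surprising extra condition (B)" advertised in the introduction, and Remark~\ref{re110} presumably explains the Pommerenke obstruction here). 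For (i) $\Rightarrow$ (iii): from a nonzero $f\in\cH^2(\a)$ one extracts, by the standard extremal-problem / reproducing-kernel argument, that the reproducing kernel $k_\a$ of $\cH^2(\a)$ is nonzero; comparing with the $H^2$ (disc-type) theory of Theorem~\ref{TP} forces Widom's condition on the critical points, giving $(A)$; and the requirement that the \emph{finer} norm $\int|f|^2\,dx$ (rather than $\int |f|^2\,dm$) be finite is where $(B)$ must appear — a character automorphic Smirnov function with $\int|f|^2\,dm<\infty$ lies in $\cH^2$ only when the Martin function grows linearly at infinity, which is $(B)$.

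The main obstacle I expect is the equivalence involving $(b)$ and $(B)$: proving that "the Riesz--Herglotz measure of $m$ is purely atomic" is \emph{equivalent} to the single scalar condition $\lim_{\eta\to\infty}M(i\eta)/\eta>0$. One direction (purely atomic $\Rightarrow$ linear growth with positive slope, since the atom at $\t(\infty)$ contributes a linear term) is geometric and not hard. The reverse — that a \emph{positive} slope at infinity already forces the \emph{entire} boundary measure to be singular, with no a.c.\ component — is the delicate part; it should follow from the fact that the total "mass budget" of the comb is controlled by the slit data and that Widom's condition $(A)$ pins down everything except the atom at infinity, so once that atom is nontrivial the remaining mass is exhausted by the slit atoms. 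Making this rigorous will require a careful analysis of the boundary behaviour of $\t(\l)$ in \eqref{11sep7}, in particular identifying $\Im\t$ on $\bbR$ as a discrete sum, and relating $\partial_x M(\l_\ast)$ and the asymptotic slope at $i\infty$ to the coefficients in that sum.
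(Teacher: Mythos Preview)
Your proposal has two genuine gaps.

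\medskip

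\textbf{Gap 1: the implication $(A)\Rightarrow(a)$ is not immediate.}
You write that once the zeros of $m'$ satisfy the Blaschke condition, ``together with the boundedness of $e^{im(z)}$ coming from $M\ge 0$, one reads off that $m'(z)$ is of bounded characteristic.'' This does not follow. Knowing that the zero set of a holomorphic function satisfies the Blaschke condition says nothing about the growth of the function away from its zeros; and $e^{im}$ being bounded controls $m$, not $m'$. What has to be shown is that $B(z)/m'(z)$ is \emph{bounded}, where $B$ is the Blaschke product on the critical orbits. The paper establishes this (Theorem~\ref{17oct02}) by a substantial argument: one proves the stronger inequality
\[
\left|\frac{B(z)}{m'(z)}\right|\sum_{\g\in\G}|\g'(z)|\Bigl(1+\frac{1}{|\g(z)|^{2}}\Bigr)\le 1,
\]
whose left side is $\G$-automorphic and subharmonic, so the maximum principle applies --- but only after approximating $\G$ by finitely generated subgroups $\G_n$, for which the corresponding function is continuous up to the boundary and equal to $1$ on the real part. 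Passing to the limit in $n$ requires a dominated-convergence argument for the Blaschke products $B^{(n)}$ that uses $(A)$ in an essential way. None of this is visible in your sketch, and it is the technical heart of $(iii)\Rightarrow(ii)$.

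\medskip

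\textbf{Gap 2: you have misidentified what measure condition $(b)$ refers to.}
You describe the Riesz--Herglotz measure of $m$ as living on the comb, ``one atom per slit base, plus the atom at $\t(\infty)$.'' But $m(z)=\t(\Lambda(z))$ is a Herglotz function on the \emph{universal cover} $\bbC_+$; its representing measure lives on $\bbR$ in the $z$-variable, not on the base of the comb. The dichotomy established in the paper (Section~\ref{DM}, following \cite{VYM}) is that this measure is either pure point, supported on the $\G$-orbits of $0$ and $\infty$, with explicit weights $\sigma_\g=(\g^{21})^{-2}$ and $\beta_\g=(\g^{22})^{-2}$, or it is singular continuous; and the pure point case is exactly $\sum_\g|\g'(i)|<\infty$, i.e.\ condition~$(b_1)$. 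The equivalence $(b_1)\Leftrightarrow(B)$ is then proved (Theorem~\ref{ths}) by applying the Julia--Carath\'eodory theorem to the chain $\t\circ\Lambda$, not by any ``mass budget'' or slit bookkeeping on the comb. Your proposed mechanism --- that Widom's condition $(A)$ exhausts the mass so the remainder must be atomic --- does not correspond to what actually happens and would not yield a proof.

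\medskip

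For the record, the implications $(i)\Rightarrow(iii)$ and $(ii)\Rightarrow(i)$ in the paper are also organized differently from your sketch: the former goes via $\cH^2(\a)\neq\{0\}\Rightarrow H^2(\a)\neq\{0\}\Rightarrow$ Widom $\Rightarrow(A)$, and then the dissipative action of $\G$ on $\bbR$ together with $\int|f|^2\,dx<\infty$ forces $\sum_\g \g'(x)<\infty$ a.e., whence $(b_1)$ by Harnack; the latter is done by exhibiting an explicit outer function $\phi$ with $\cH^2(\a)=\phi\,H^2(\a_\phi^{-1}\a)$, reducing everything to Theorem~\ref{TP}, rather than by building an explicit $f_\a$ involving $(m')^{1/2}$.
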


\begin{remark}
First of all we note that  in our theorem condition ($a$), as an expected counterpart of (ii) in the Pommerenke theorem, should be accompanied by the second condition  ($b$),
respectively, the Widom type condition (A) in our case is accompanied by condition (B) that characterizes a special behaviour of the Martin function at infinity.
\end{remark}
\begin{remark}
(B) is the well known  Akhiezer-Levin condition,  see e.g. \cite{BS}. As soon as (B) holds $M(\l)$ is also called the
\textit{Phragm\'en-Lindel\"of function}, see \cite{Koo} and especially  Theorem on p. 407 in this book.
Condition ($b$) was discussed in \cite{VYM}, see especially  Theorem 5 and Lemma 1 there. It can be equivalently stated in a form similar
to condition (B)
\begin{equation}\label{27dec04}
(b_1)\quad\lim\limits_{y\to+\infty}{M(\L(iy))}/y>0.
\end{equation}
We point out that in condition (B) $i\eta$ belongs to the upper half plane of the domain $\mathbb C\setminus E$, whereas in condition ($b_1$) $iy$ is in the universal cover. It appears that the equivalence of the Akhiezer-Levin condition (B)
and property ($b$), proved in Section~\ref{Akhiezer-Levin} below, is a new result.
\end{remark}

\begin{remark}\label{re110}
Pommerenke's proof of implication (iii) to (ii) in Theorem \ref{TP} is based on an exhaustion of the given domain $\Omega$ by subdomains $\O_{\e}$: connected components of the set
$$
\{\l: G(\l,\l_*)>\e\},
$$
containing $\l_*$. It is highly important in the proof that such domains are finitely connected.  To follow this line in our  proof and to keep under control  the critical points of the Martin function one has to make a similar exhaustion generated by the sets
$$
\{\l: M(\l)>\e\}.
$$
But the simplest example
$$
\O:=\left\{\l: |\cos\l|>\frac 1 2\right\}
$$
shows that the corresponding domains $\O_\e$ remain possibly infinitely connected  for \textit{all} sufficiently small $\e$. Thus, another kind of approximation of the given domain is needed, respectively the proof should be essentially reorganised.
\end{remark}

In this paper we choose the approximation of the group $\G$ by its finitely generated subgroups. The corresponding construction is discussed in Section~\ref{FZ}.
In Section~\ref{PT} we partially reprove Pommerenke Theorem~\ref{TP} (equivalence of (ii) and (iii)) using this approach.
In this part,
it is an essential simplification  of his original construction. Note, though, that we are restricted in our setting to  Denjoy domains only, while Pommerenke's proof is valid for arbitrary Riemann surfaces. Subsection \ref{DM} describes the Martin functions $m(z)$ that possess property
($b$) (equivalently ($b_1$) of \eqref{27dec04}, by Proposition \ref{27dec05}). In Section \ref{Akhiezer-Levin} we prove that condition ($b_1$) and Akhiezer-Levin condition
(B) are equivalent. Finally, in Section \ref{pro} we prove our main Theorem~\ref{mth-15oct01}. The proof is broken into several steps, each one corresponds to a certain implication between assertions (i)--(iii). For the reader's convenience in the Appendix we give proofs of the Carath\'eodory and Frostman theorems, that were essential components of the original Pommernke's proof \cite{Pom}  (given there as references).

\section{Preliminaries}\label{FZ}

The Blaschke condition on a set $\{z_k\}$ for the upper half plane can be written as
\begin{equation}\label{16sep01}
\sum\limits_{k}\frac{\Im z_k}{|z-\overline{z_k}|^2}<\infty,\quad
\Im z_k>0,
\end{equation}
where $z$ is an arbitrary fixed point in the upper half plane.
The convergence in \eqref{16sep01} is uniform in $z$ on compact subsets of the open upper half plane, since
$$
\frac{|z-w|}{|\widetilde z-w|}
$$
is continuous and, therefore, is bounded when $z$ and $\widetilde z$ are in a compact subset of the open upper half plane and $w$ is in the closed lower half plane (including infinity). Hence, the corresponding Blaschke product
$$
\prod\limits_{k}\frac{z-z_k}{z-\overline {z_k}}C_k,
$$
converges uniformly on the compact subsets of $\mathbb C_+$,
where constants $C_k$ are chosen to make the factors positive at one point of the upper half plane.

Since $\Gamma$ is of convergent type, the Blaschke condition holds for the orbit of an arbitrary point $z_*$ in the upper half plane
\begin{equation}\label{15sep01}
\sum\limits_{\gamma\in\Gamma}\frac{\Im \gamma (z_*)}{|z-\overline{\gamma(z_*)}|^2}<\infty,\quad
\Im z>0.
\end{equation}
Hence, $g(z, z_*)$ is well defined by this formula 
\begin{equation}\label{16sep02}
g(z, z_*)=\prod\limits_{\gamma\in\Gamma}\frac{z-\gamma(z_*)}{z-\overline{\gamma(z_*)}}\ C_\gamma,\quad z\in\mathbb C_+,
\end{equation}
and the convergence is uniform on the compact subsets of $\mathbb C_+$.
Equivalently, $g(z, z_*)$ can be defined as
\begin{equation}\label{23sep01}
g(z, z_*)=\prod\limits_{\gamma\in\Gamma}\frac{\gamma (z)-z_*}{\gamma(z)-\overline{z_*}}\ \widetilde C_\gamma,\quad z\in\mathbb C_+ .
\end{equation}
For the logarithmic derivative of $g(z, z_*)$ we get
\begin{equation}\label{16sep03}
\frac{g'(z, z_*)}{g(z, z_*)}=(z_*-\overline{z_*})
\sum\limits_{\gamma\in\Gamma}
\frac{\gamma'(z)}{(\gamma(z)-z_*)(\gamma(z)-\overline{z_*})}
,\quad z\in\mathbb C_+.
\end{equation}
From here we see that
\begin{equation}\label{16sep04}
g'(z, z_*)=
(z_*-\overline{z_*})
\sum\limits_{\gamma\in\Gamma}
\frac{g(z, z_*)\gamma'(z)}{(\gamma(z)-z_*)(\gamma(z)-\overline{z_*})}
,\quad z\in\mathbb C_+.
\end{equation}
The convergence in \eqref{16sep04} is absolute and uniform on compact subsets of $\mathbb C_+$ due to the uniform convergence in \eqref{15sep01}, see also \eqref{16sep02}, \eqref{23sep01}.

We consider domain $\mathcal F$ that is obtained from the universal covering space $\mathbb C_+$ by removing countably (or finitely) many semi-disks with real centers. We choose one of them to be of radius $1$ with center at $0$ and we label it with index $0$. The universal covering map carries $\mathcal F$ conformally onto the upper half plane in $\mathbb C\setminus E$. The semi-circles are mapped onto the gaps, the real part of the boundary of
$\mathcal F$ is mapped onto $E$. The fundamental domain of the group $\Gamma$ can be obtained by taking the union of $\mathcal F$ with its reflection about the $0$-th semi-circle. We also mention here that generators of the group $\Gamma$ are the compositions of this reflection with  the reflections about the other boundary semi-circles of $\mathcal F$.

We consider domain $\mathcal F_n$ that is obtained from $\mathcal F$ by keeping a finite number of the semi-circles and
replacing the others with their diameters on the real line. We have that
$$
\mathcal F=\bigcap_n\mathcal F_n.
$$
Group $\Gamma_n$ is generated by the compositions of pairs of the reflections about the boundary semi-circles of $\mathcal F_n$.
$\Gamma_n$ is a subgroup of $\Gamma$ and
$$
\Gamma=\bigcup_n\Gamma_n.
$$
We consider the complex Green function for $\Gamma_n$ similar to the one for $\Gamma$ with the same $z_*$
\begin{equation*}
g_n(z, z_*)=\prod\limits_{\gamma\in\Gamma_n}\frac{\gamma (z)-z_*}{\gamma(z)-\overline{z_*}}\ \widetilde C_\gamma,\quad z\in\mathbb C_+.
\end{equation*}
$g_n$ is a divisor of $g$. Therefore,
\be\label{27dec03}
|g_n(z)|\ge|g(z)|,\quad z\in\mathbb C_+ .
\ee
We also mention here that
$$
\frac{ g'_n(z, z_*)}{g_n(z, z_*)}=
(z_*-\overline{z_*})
\sum\limits_{\gamma\in\Gamma_n}
\frac{\gamma'(z)}{(\gamma(z)-z_*)(\gamma(z)-\overline{z_*})}
,\quad z\in\mathbb C_+,
$$
and
\begin{equation}\label{28aug17}
g'_n(z, z_*)=
(z_*-\overline{z_*})
\sum\limits_{\gamma\in\Gamma_n}
\frac{ g_n(z, z_*)\gamma'(z)}{(\gamma(z)-z_*)(\gamma(z)-\overline{z_*})}
.
\end{equation}
Again, the convergence is absolute and uniform on the compact subsets of $\mathbb C_+$, since this is true even for the whole group $\Gamma$ (see \eqref{16sep04}).
\begin{lemma}\label{L:28aug01}
As $n$ goes to $\infty$, $g_n(z, z_*)$ converges to $g(z, z_*)$ uniformly on the compact subsets in $\mathbb C_+$ and
$g'_n(z, z_*)$ converges to $g'(z, z_*)$ uniformly on the compact subsets in $\mathbb C_+$. Let $c_{z_*,k}^{(n)}$ be the
zero of $g'_n(z, z_*)$ on the $k$-th semicircle and
$c_{z_*,k}$ be the zero of $g'(z, z_*)$ on the $k$-th semicircle. Then
$$
c_{z_*,k}^{(n)}\to c_{z_*,k}
$$
for every $k\ne 0$. Moreover, $g_n(c_{z_*,k}^{(n)}, z_*)$ converges to $g(c_{z_*,k}, z_*)$ for every
$k\ne 0$.
\end{lemma}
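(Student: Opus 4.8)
The plan is to derive all four assertions from the uniform convergence on compacta of the infinite products defining $g$ and $g_n$, together with the divisor inequality $|g_n|\ge|g|$ of \eqref{27dec03} and the location of the critical points of the Green function on a gap given by Proposition~\ref{prop12}. The first two assertions are soft; the substantive point is $c_{z_*,k}^{(n)}\to c_{z_*,k}$, and the hard part there is that these zeros lie on the arc $S_k$, whose endpoints are on $\mathbb R$, so Hurwitz's theorem on compact subsets of $\mathbb C_+$ by itself cannot prevent the zeros from running to the boundary.

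First I would treat the convergence of $g_n$ and $g_n'$. Since $\Gamma_n\uparrow\Gamma$ and $g_n$ is the subproduct of $g$ over $\Gamma_n$ formed with the same unimodular constants, $g=g_n\cdot\prod_{\gamma\in\Gamma\setminus\Gamma_n}\frac{\gamma(z)-z_*}{\gamma(z)-\overline{z_*}}\,\widetilde C_\gamma$. The product defining $g$ converges uniformly on compacta of $\mathbb C_+$ (shown above from the Blaschke bound \eqref{15sep01}), so on each compact $K\subset\mathbb C_+$ the tail factor tends to $1$ uniformly as $n\to\infty$; together with $|g_n|\le 1$ this yields $g_n\to g$ uniformly on $K$. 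Then $g_n'\to g'$ uniformly on compacta follows from the Cauchy estimate for the derivative (alternatively by passing to the limit in \eqref{28aug17}, whose convergence is absolute and locally uniform even for the full group).

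Next I would locate the critical points. Fix $k\ne 0$ and take $n$ so large that $S_k$ is among the semicircles kept in $\mathcal F_n$. Put $u:=-\ln|g(\cdot,z_*)|$, which by \eqref{11sep2} equals $G(\Lambda(\cdot),\l_*)$, and $u_n:=-\ln|g_n(\cdot,z_*)|$, the Green function of the domain uniformized by $\Gamma_n$ precomposed with its covering map; both are positive harmonic on $\mathbb C_+$, and $u_n\le u$ by \eqref{27dec03}. The covering map $\Lambda$ takes $S_k$ homeomorphically onto $[a_k,b_k]$, and $G(\cdot,\l_*)$ on the open gap $(a_k,b_k)$ is real analytic, positive, vanishes at both ends, and by Proposition~\ref{prop12} has exactly the one critical point $\mu_{\l_*,k}$ there; hence it is unimodal and $u|_{S_k}$ attains its maximum $h_{\l_*,k}:=G(\mu_{\l_*,k},\l_*)$ only at the interior point $c_{z_*,k}$. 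The same argument applied to the (finitely connected, hence regular) Denjoy domain uniformized by $\Gamma_n$ shows $u_n|_{S_k}$ is also unimodal, with maximum attained only at $c_{z_*,k}^{(n)}$. For small $\e>0$, let $K_\e\subset S_k$ be the compact subarc obtained by deleting the $\e$-neighbourhoods of the two endpoints of $S_k$; for $\e$ small it contains $c_{z_*,k}$ and $\max_{K_\e}u=h_{\l_*,k}$. On $K_\e$ both $|g|$ and $|g_n|\ge|g|$ are bounded below by a positive constant, so the uniform convergence $g_n\to g$ on $K_\e$ upgrades to $u_n\to u$ uniformly on $K_\e$, whence $u_n(c_{z_*,k}^{(n)})=\max_{S_k}u_n\ge\max_{K_\e}u_n\to h_{\l_*,k}$. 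Since $u\ge u_n$, we get $u(c_{z_*,k}^{(n)})\ge h_{\l_*,k}-\tau_n$ with $\tau_n\to 0$; and because $u|_{S_k}$ has a strict maximum at $c_{z_*,k}$, the sets $\{z\in S_k:u(z)\ge h_{\l_*,k}-\tau_n\}$ contract to $\{c_{z_*,k}\}$, so $c_{z_*,k}^{(n)}\to c_{z_*,k}$. Finally $g_n(c_{z_*,k}^{(n)},z_*)\to g(c_{z_*,k},z_*)$ is immediate, since for large $n$ the points $c_{z_*,k}^{(n)}$ lie in a fixed closed disk about $c_{z_*,k}$ inside $\mathbb C_+$ on which $g_n\to g$ uniformly, and $g$ is continuous.

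The delicate point, as announced, is ruling out the escape of $c_{z_*,k}^{(n)}$ to an endpoint of $S_k$, where no uniform control is available. The mechanism is the interplay of $|g_n|\ge|g|$ with the unimodal characterization of $c_{z_*,k}^{(n)}$ as the maximizer of $u_n|_{S_k}$: the inequality keeps this maximizer away from the region where $u$ is small, and unimodality of $u|_{S_k}$ then squeezes the maximizers onto $c_{z_*,k}$. Everything else reduces to the uniform convergence of the product for $g$ from the preliminary discussion.
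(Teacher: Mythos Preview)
Your proof is correct. The first, second, and fourth assertions are handled exactly as in the paper (tail of the convergent product, Cauchy's formula, and continuity plus locally uniform convergence). The substantive difference is in the third assertion, the convergence $c_{z_*,k}^{(n)}\to c_{z_*,k}$.

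The paper argues in one line: regularity of $E$ plus the locally uniform convergence $g_n'\to g'$, then Rouch\'e's theorem. Implicitly this means: since $c_{z_*,k}$ is a simple isolated zero of $g'$ in the interior of $S_k$ (regularity guarantees the critical point sits strictly inside the gap), a small disk about it lies in $\mathbb C_+$, and Hurwitz forces $g_n'$ to have a unique zero in that disk for large $n$; one then identifies that zero with $c_{z_*,k}^{(n)}$. You instead exploit the variational characterization of the critical point as the unique maximizer of $-\ln|g_n|$ along $S_k$, together with the divisor inequality \eqref{27dec03}, to trap the maximizers in shrinking superlevel sets of $-\ln|g|$. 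This buys you an entirely explicit mechanism excluding escape of $c_{z_*,k}^{(n)}$ toward the real endpoints of $S_k$, precisely the step the paper leaves to the reader under the word ``regularity''. The paper's route is shorter; yours is more transparent about where the geometry of the Green function enters and why compactness inside $\mathbb C_+$ suffices.
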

\begin{proof}
The uniform convergence of $g_n(z, z_*)$ follows from the convergent type of $\Gamma$ (see \eqref{15sep01}, \eqref{16sep02}). The uniform convergence
of $g'_n(z, z_*)$ follows from the uniform convergence of $g_n(z, z_*)$ (by local Cauchy integral formula). The convergence of $c_{z_*,k}^{(n)}$ follows from the {\em regularity of $E$} and from the uniform convergence of $g'_n(z, z_*)$, by the Rouche's Theorem. The last assertion is obtained by combining the uniform convergence of $g_n(z, z_*)$ with the convergence of $c_{z_*,k}^{(n)}$.
\end{proof}

\section{Pommerenke Theorem}\label{PT}
\begin{theorem}\label{Pommerenke}
Let $c_{z_*,k}$ be the zeros of $g'(z, z_*)$, one on each semicircle on the boundary of $\cF$, except for the $0$-th one.
Assume that they satisfy the Widom condition \eqref{27dec02}
\begin{equation}\label{28aug01}
\prod_{k\ne 0}|g(c_{z_*, k}, z_*)|>0.
\end{equation}
Then $g'(z, z_*)$ is of bounded characteristic, that is, it is a ratio of two bounded analytic functions.
\end{theorem}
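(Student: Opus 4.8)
The plan is to use the finite-genus approximation $\Gamma_n$ introduced in Section~\ref{FZ} and to transfer the classical (finitely connected) Widom estimate to the limit. Recall that for the \emph{finitely connected} domain $\Omega_n = \mathbb C \setminus E_n$ uniformized by $\Gamma_n$ the function $g'_n(z,z_*)$ is automatically of bounded characteristic: indeed $g_n$ is a character automorphic Blaschke product with only finitely many generators, and by \eqref{28aug17} one can write $g'_n$ as $g_n$ times a finite sum of the automorphic-type kernels $\gamma'(z)/\bigl((\gamma(z)-z_*)(\gamma(z)-\overline{z_*})\bigr)$; each term is a ratio of bounded functions on $\mathbb C_+$, so $g'_n$ is of bounded characteristic. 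What one really needs is a \emph{uniform} bound: I would show that $g'_n/B_n$ is a ratio of two functions bounded \emph{by a constant independent of $n$}, where $B_n$ is the Blaschke product over the zeros of $g'_n$. The natural candidate is the Widom estimate
\begin{equation*}
|g'_n(z,z_*)| \le \frac{C}{\Im z}\,\frac{|B_n(z)|}{\prod_{k\ne 0}|g_n(c^{(n)}_{z_*,k},z_*)|},
\end{equation*}
or rather its ``outer part'', obtained by writing $g'_n = g_n \cdot (\text{inner})^{-1} \cdot (\text{Widom minorant})$ and bounding the Widom minorant uniformly.

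The key steps, in order, are: (1) For fixed $n$, factor $g'_n(z,z_*) = B_n(z)\, \cdot \varphi_n(z)$ where $B_n$ is the Blaschke product over the zeros $c^{(n)}_{z_*,k}$, $k \ne 0$, and $\varphi_n$ is zero-free. (2) Estimate $\varphi_n$ from above and below using the representation \eqref{28aug17}: on the boundary semicircles one can control the poles coming from the orbit of $z_*$ by the corresponding factors of $g_n$, and the point is precisely that $g'_n$ vanishes at exactly one point of each semicircle, so dividing by $B_n$ removes the only zeros; the resulting $\varphi_n$ admits a harmonic majorant whose mass is $\sum_{k \ne 0}\bigl(-\ln|g_n(c^{(n)}_{z_*,k},z_*)|\bigr) = \sum_{k\ne 0} G_n(\mu^{(n)}_k,\l_*)$ plus a bounded term. (3) By Lemma~\ref{L:28aug01}, $g_n(c^{(n)}_{z_*,k},z_*) \to g(c_{z_*,k},z_*)$, and by \eqref{27dec03} and the divisor property $|g_n| \ge |g|$ one has $|g_n(c^{(n)}_{z_*,k},z_*)| \ge |g(c_{z_*,k^{\prime}},z_*)|$-type lower bounds; combined with the Widom hypothesis \eqref{28aug01} this gives $\inf_n \prod_{k\ne 0}|g_n(c^{(n)}_{z_*,k},z_*)| \ge \delta > 0$. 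Hence the harmonic majorants of $\varphi_n$ have uniformly bounded mass, so $\{\varphi_n\}$ is a normal family of functions of uniformly bounded characteristic. (4) Pass to the limit: $g'_n \to g'$ and $B_n \to B$ (the Blaschke product over all zeros $c_{z_*,k}$, which satisfies the Blaschke condition again by \eqref{28aug01}) uniformly on compacta, hence $\varphi_n \to \varphi := g'/B$ uniformly on compacta, and a uniform bound on characteristic is preserved under such limits (via Nevanlinna's theorem / normal families in the Smirnov class). Therefore $g'(z,z_*) = B(z)\varphi(z)$ is a ratio of two bounded analytic functions.

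The main obstacle I expect is Step~(2)–(3): making the bound on $\varphi_n$ genuinely \emph{uniform in $n$}. Two subtleties must be handled carefully. First, the zero $c^{(n)}_{z_*,k}$ of $g'_n$ lies on the $k$-th semicircle only for the finitely many $k$ kept in $\mathcal F_n$; for the discarded indices the ``would-be critical point'' collapses onto the real diameter, and one must check that this does not create spurious boundary behaviour or spoil the normalization of $B_n$ (it should not, since those factors simply disappear from $g_n$ and the corresponding gap closes). Second, and more seriously, one needs that the outer/inner factorization of $g'_n$ does not hide an inner factor whose mass blows up as $n \to \infty$; the Green-function identity \eqref{11sep2} together with the convergence $G_n \to G$ and Widom's summability \eqref{10sep7} for $\Omega$ should control this, but the bookkeeping relating $\sum_{k}G_n(\mu^{(n)}_k,\l_*)$ to $\sum_k G(\mu_k,\l_*)$ — in particular a uniform upper bound for the former — is the technical heart of the argument. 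Once that uniform estimate is in place, the normal-family limiting argument in Step~(4) is routine.
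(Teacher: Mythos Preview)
Your overall architecture---approximate by the finitely generated $\Gamma_n$, get a bound that is uniform in $n$, then pass to the limit---is exactly the paper's strategy, and your limiting step (3)--(4) is essentially right: the paper uses the divisor inequality $|g_n|\ge|g|$ together with the fact that $c_{z_*,k}$ minimizes $|g(\cdot,z_*)|$ on the $k$-th semicircle to get $|B_k^{(n)}(z_*)|\ge|g(c_{z_*,k},z_*)|$, then Dominated Convergence and Fatou to conclude $|B^{(n)}|\to|B|$. Your worry about the discarded indices is also a non-issue: one simply sets $B_k^{(n)}\equiv 1$ for those $k$.

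The genuine gap is Step~(2). You assert that ``the resulting $\varphi_n$ admits a harmonic majorant whose mass is $\sum_{k\ne 0}\bigl(-\ln|g_n(c^{(n)}_{z_*,k},z_*)|\bigr)$'' but give no mechanism for producing that majorant; this is exactly where the proof lives, and your displayed ``Widom estimate'' (with its stray $1/\Im z$) is neither proved nor quite the right shape. The paper does \emph{not} try to majorize $\log|\varphi_n|$. Instead it works in the reciprocal direction and builds an object to which the maximum principle applies cleanly: set
\[
\ff_n(z)\;=\;\left|\frac{B^{(n)}(z)}{g'_n(z,z_*)}\right|\sum_{\gamma\in\Gamma_n}\frac{|\gamma'(z)|}{|\gamma(z)-\overline{z_*}|^{2}}.
\]
Each summand is the modulus of a function holomorphic on $\bbC_+$ (the zeros of $g'_n$ being cancelled by $B^{(n)}$), so $\ff_n$ is subharmonic; and the full sum is $\Gamma_n$-automorphic, so $\ff_n$ descends to a subharmonic function on the finitely connected $\Omega_n$, continuous up to $E_n$. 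By the maximum principle its supremum is attained on $E_n$. But for real $x$ in the boundary of the fundamental domain, formula \eqref{28aug17} shows that the series equals $|g'_n(x,z_*)|$ exactly (all terms are positive there and $|g_n|=|B^{(n)}|=1$), so $\ff_n\equiv 1$ on $E_n$ and hence $\ff_n\le 1$ on all of $\bbC_+$. Extracting the $\gamma=1_\Gamma$ term yields
\[
\left|\frac{B^{(n)}(z)}{(z-\overline{z_*})^{2}\,g'_n(z,z_*)}\right|\le 1,
\]
which is the uniform-in-$n$ bound you were looking for---obtained with no reference to harmonic majorants of $\log|\varphi_n|$, no Nevanlinna bookkeeping, and no Widom condition at all (the latter enters only to make $B$ converge and to pass to the limit). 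In particular your concern that ``the outer/inner factorization of $g'_n$ might hide an inner factor whose mass blows up'' never arises here; that question is treated separately, \emph{after} the limit, in Theorem~\ref{02sep04}.
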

\begin{proof}
Let $B_k$ be the Blaschke product over the orbit of $c_{z_*,k}$, $k\ne 0$
\begin{equation*}
B_k(z)=\prod\limits_{\gamma\in\Gamma}\frac{\gamma (z)-c_{z_*,k}}{\gamma (z)-\overline{c_{z_*,k}}}\ d_\gamma,\quad z\in\mathbb C_+,
\end{equation*}
where $|d_\gamma|=1$ are chosen so that the factors in $B_k$ are positive at $z_*$.
It converges since $\Gamma$ is of the convergent type.
We now consider
\begin{equation*}
B(z)= \prod\limits_{k\ne 0}B_k(z).
\end{equation*}
This product converges due to assumption \eqref{28aug01}. Moreover, it converges uniformly on the compact subsets of $\mathbb C_+$.

The goal here is to prove that $$\frac{1}{(z-\overline{z_*})^2}\frac{B(z)}{g'(z, z_*)}$$ is a bounded analytic function on $\mathbb C_+$. Then $g'(z, z_*)$ will be the ratio of the following two bounded analytic functions
$$
\frac{1}{(z-\overline{z_*})^2}B(z)\quad
\text{and}\quad
\frac{1}{(z-\overline{z_*})^2}\frac{B(z)}{g'(z, z_*)}.
$$
More precisely, we will prove that
\begin{equation}\label{29aug11}
\left|\frac{1}{(z-\overline{z_*})^2}\frac{B(z)}{g'(z, z_*)}\right|\le 1,\quad z\in\mathbb C_+.
\end{equation}
It turns out that it is easier to prove even a stronger inequality
\begin{equation}\label{30aug01}
\ff(z)\le 1, \quad z\in\mathbb C_+, \quad \text{where}\quad \ff(z)=\left|\frac{B(z)}{g'(z, z_*)}\right|
\sum\limits_{\gamma\in\Gamma}
\dfrac{|\gamma'(z)|}{|\gamma (z)-\overline{z_*}|^2}.
\end{equation}
It is easier because of the automorphic property of the latter function. Note that the series in \eqref{30aug01}
converges to a function continuous on $\mathbb C_+$ for any group $\Gamma$ of convergent type.
So, we are going to prove that
\begin{equation}\label{30aug02}
\ff(z)=\sum\limits_{\gamma\in\Gamma}
\left|
\dfrac{B(z)\gamma'(z)}{g'(z, z_*)(\gamma(z)-\overline{z_*})^2}
\right|
\le 1,
\quad z\in\mathbb C_+.
\end{equation}
Observe that
$$
\dfrac{B(z)\gamma'(z)}{g'(z, z_*)(\gamma(z)-\overline{z_*})^2}
$$
is holomorphic on $\mathbb C_+$. Therefore, its absolute value is a
subharmonic function on $\mathbb C_+$. Hence $\ff(z)$
is a subharmonic function, which is automorphic with respect to
$\Gamma$.

We consider first the finitely generated approximation described in Section \ref{FZ}. Let $\O_n$ be the Denjoy domain
corresponding to the subgroup $\G_n$, $\L_n:\bbC_+/\G_n\simeq\O_n$.
Let $c_{z_*,k}^{(n)}$ be the
zero of $g'_n(z, z_*)$ on the $k$-th semicircle.
Let $B_k^{(n)}$ be the Blaschke product over the orbit of $c_{z_*,k}^{(n)}$ under $\Gamma_n$
$$
B_k^{(n)}(z)=\prod\limits_{\gamma\in\Gamma_n}\frac{\gamma(z)-c_{z_*,k}^{(n)}}{\gamma(z)-\overline{c_{z_*,k}^{(n)}}}\ d_\gamma,\quad z\in\mathbb C_+,
$$
if $k$-th semicircle is a part of the boundary of $\mathcal F_n$, and let $B_k^{(n)}(z)=1$ otherwise.
We now consider
$$
B^{(n)}(z)= \prod\limits_{k\ne 0}B_k^{(n)}(z).
$$
We are going to prove this approximative version of \eqref{30aug02}
\begin{equation}\label{30aug03}
\ff_n(z)\le 1, \quad z\in\mathbb C_+, \quad\text{where}\quad \ff_n(z)=\sum\limits_{\gamma\in\Gamma_n}
\left|
\dfrac{B^{(n)}(z)\gamma'(z)}{g'_n(z, z_*)(\gamma(z)-\overline{z_*})^2}
\right|.
\end{equation}
The advantage of the series in \eqref{30aug03} over the series in \eqref{30aug02} is that it
converges also on the boundary of the domain $\mathcal F_n$ and that the sum in \eqref{30aug03} is continuous
on $\mathcal F_n$ and up to the boundary,
since $\Gamma_n$ is finitely generated. The same is true for the fundamental domain of $\Gamma_n$, which is the union of $\mathcal F_n$ and the reflection of $\mathcal F_n$ about the $0$-th semicircle.

Due to the automorphic property of $\ff_n(z)$, it possesses the representation
\begin{equation*}
\ff_n(z)=\fF_n(\L_n(z)),
\end{equation*}
where $\fF_n(\l)$
is still subharmonic in $\O_n$ and continuous
up to the boundary of the domain.  Therefore, its maximum is attained on the boundary of $\O_n$. Thus, going back to the function $\ff_n(z)$, we get that its maximum is attended  on the part of the boundary of the fundamental domain that lies on the real axis. Recall that on the boundary of the fundamental domain of $\Gamma_n$ all series below converge to continuous functions. Therefore, for real $z$ on the boundary of the fundamental domain of $\Gamma_n$ we have, by \eqref{28aug17},
\begin{equation}\label{30aug04}
|g'_n(z, z_*)|= \left|g_n(z, z_*)\sum\limits_{\gamma\in\Gamma_n}\frac{\gamma'(z)}{(\gamma(z)-z_*)(\gamma(z)-\overline{z_*})}\right|
=\sum\limits_{\gamma\in\Gamma_n}\frac{\gamma'(z)}{|\gamma(z)-\overline{z_*}|^2}
.
\end{equation}
Here we used the fact that $\gamma(z)$ is real for every real $z$ and that $\gamma'(z)$ is positive for every real $z$.
Therefore, for every real $z$ on the boundary of the fundamental domain
\begin{equation}\label{30aug05}
\ff_n(z)=\sum\limits_{\gamma\in\Gamma_n}
\left|
\dfrac{B^{(n)}(z)\gamma'(z)}{g'_n(z, z_*)(\gamma(z)-\overline{z_*})^2}
\right|
=\frac{1}{|g'_n(z, z_*)|}
\sum\limits_{\gamma\in\Gamma_n}\frac{\gamma'(z)}{|\gamma(z)-\overline{z_*}|^2}
=1.
\end{equation}
Hence, \eqref{30aug03} follows. Thus we have this approximative version of \eqref{30aug01}
\begin{equation}\label{30aug06}
\left|\frac{B^{(n)}(z)}{g'_n(z, z_*)}\right|
\sum\limits_{\gamma\in\Gamma_n}
\dfrac{|\gamma'(z)|}{|\gamma(z)-\overline{z_*}|^2}
\le 1,
\quad z\in\mathbb C_+.
\end{equation}

Now we want to pass to the limit in \eqref{30aug06} for arbitrary fixed $z\in\mathbb C_+$ as $n$ goes to infinity.
By Lemma \ref{L:28aug01}, $g'_n(z, z_*)$ converges to $g'(z, z_*)$.
The sum over $\Gamma_n$
converges to the sum over $\Gamma$.
It remains to show that $|B^{(n)}(z)|$ converges to $|B(z)|$.
Note that $|B^{(n)}_k(z)|=|g_n(c_{z_*,k}^{(n)}, z)|$ converges to
$|g(c_{z_*,k}, z)|=|B_k(z)|$, by Lemma \ref{L:28aug01}. 
Further,
$$
|B^{(n)}_k(z_*)|=|g_n(c_{z_*,k}^{(n)}, z_*)|\ge |g(c_{z_*,k}^{(n)}, z_*)| \ge |g(c_{z_*,k}, z_*)|.
$$
The first inequality holds since $g_n$ is a divisor of $g$, the second does since $c_{z_*,k}$ is the point of minimum of
$|g|$ on the $k$-th semi-circle.
By assumption \eqref{28aug01} the product
$$
\prod\limits_{k\ne 0}|g(c_{z_*,k}, z_*)|
$$
converges (that is greater than $0$). Then, by the Dominated Convergence theorem\footnote{This case reduces to the standard Dominated Convergence by applying $(-\log)$ to the products.},
\begin{align*}
\lim_{n\to\infty}|B^{(n)}(z_*)|=&\lim_{n\to\infty}\prod\limits_{k\ne 0}|B^{(n)}_k(z_*)|
=\prod\limits_{k\ne 0}\lim_{n\to\infty}|B^{(n)}_k(z_*)|
\\
=&
\prod\limits_{k\ne 0}|B_k(z_*)|=|B(z_*)|.
\end{align*}
There exists a subsequence $n_j$ such that $B^{(n_j)}(z)$
converges for all $z\in\mathbb C_+$. Let
$$
\widetilde B(z)=\lim_{j\to\infty}B^{(n_j)}(z).
$$
Pick and hold any $z\in\mathbb C_+$.
Then, by the Fatou's lemma\footnote{Same explanation as in the previous footnote.},
\begin{align*}
|\widetilde B(z)|&=\lim_{j\to\infty}|B^{(n_j)}(z)|=\lim_{j\to\infty}\prod\limits_{k\ne 0}|B^{(n_j)}_k(z)|
\\
&\le\prod\limits_{k\ne 0}\lim_{j\to\infty}|B^{(n_j)}_k(z)|
=\prod\limits_{k\ne 0}|B_k(z)|=|B(z)|.
\end{align*}
Thus
$$
|\widetilde B(z)|\le |B(z)|,\quad z\in\mathbb C_+.
$$
Since
$$
|\widetilde B(z_*)|= |B(z_*)|,
$$
the equality must hold
\begin{equation}\label{24oct01}
|\widetilde B(z)|= |B(z)|,\quad z\in\mathbb C_+.
\end{equation}
Since \eqref{24oct01} holds for every subsequential limit $\widetilde B(z)$ of $B^{(n)}(z)$,
we get
$$
 B(z)=\lim_{n\to\infty}B^{(n)}(z).
$$
Thus, we get \eqref{30aug01} and, therefore, \eqref{29aug11}.
\end{proof}
\if{
\begin{remark}
Actually we proved that
\begin{equation}\label{31aug01}
\left|
\frac{B(z)}{g'(z, z_*)}
\sum\limits_{\gamma\in\Gamma}
\dfrac{\gamma'(z)}{(\gamma(z)-\overline{z_*})^2}
\right|
\le
\left|
\frac{B(z)}{g'(z, z_*)}
\right|
\sum\limits_{\gamma\in\Gamma}
\dfrac{|\gamma'(z)|}{|\gamma(z)-\overline{z_*}|^2}
\le 1,
\quad z\in\mathbb C_+.
\end{equation}
\end{remark}
}\fi
\begin{remark}\label{01sep03}
Since the function
\begin{equation*}
\frac{1}{(z-\overline{z_*})^2}\frac{B(z)}{g'(z, z_*)}
\end{equation*}
is bounded, it can be written as
$$
\frac{1}{(z-\overline{z_*})^2}\frac{B(z)}{g'(z, z_*)}=I(z)\cdot O_2(z),
$$
where $I$ is an inner function and $O_2$ is a bounded outer function.
Moreover, $I$ is a singular inner function, since the left hand side does not have zeros in $\mathbb C_+$.
Therefore,
$$
g'(z, z_*)=\frac{O_1(z)}{O_2(z)}\frac{B(z)}{I(z)},
$$
where $O_1(z)=\frac{1}{(z-\overline{z_*})^2}$ is also a bounded outer function. Thus,
\begin{equation}\label{02sep10}
g'(z, z_*)=O(z)\frac{B(z)}{I(z)},
\end{equation}
where
$O(z)=\frac{O_1(z)}{O_2(z)}$ is a ratio of two bounded outer functions.
\end{remark}
\begin{theorem}[Pommerenke] \label{02sep04} The function
$$
\frac{1}{(z-\overline{z_*})^2}\frac{B(z)}{g'(z, z_*)}
$$
is outer. That is, $I(z)=1$.
\end{theorem}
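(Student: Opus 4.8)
Set $F(z):=\dfrac{1}{(z-\overline{z_*})^2}\dfrac{B(z)}{g'(z,z_*)}$; by \eqref{29aug11} it is bounded by $1$ and it has no zeros in $\mathbb C_+$ (the zeros of $g'(\cdot,z_*)$ are exactly the orbits of the $c_{z_*,k}$, i.e. precisely the zeros of $B$), so by Remark~\ref{01sep03} $F=I\cdot O_2$ with $I$ a \emph{singular} inner function and $O_2$ bounded outer. Since $|O_2^*|=|F^*|$ a.e.\ on $\mathbb R$, one has $\log|O_2(z_*)|=\int_{\mathbb R}\log|F^*(x)|\,P_{z_*}(x)\,dx$ (with $P_{z_*}$ the Poisson kernel of $\mathbb C_+$ at $z_*$) and hence the general inequality $\log|F(z_*)|\le\int_{\mathbb R}\log|F^*(x)|\,P_{z_*}(x)\,dx$, with equality iff $\log|I(z_*)|=0$, i.e.\ iff $I\equiv$ const $=1$. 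So the plan is to prove the reverse inequality, namely the outer mean value identity $\log|F(z_*)|=\int_{\mathbb R}\log|F^*(x)|\,P_{z_*}(x)\,dx$.

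Next I describe $|F^*|$ explicitly. Since $\Gamma$ is of convergent type its limit set $\Lambda\subset\mathbb R$ is a Lebesgue null set, and transporting the boundary identity \eqref{30aug04} by the elements of $\Gamma$ (exactly as \eqref{30aug05} was deduced from \eqref{30aug04}) gives, for a.e.\ $x\in\mathbb R$, that $|g(x,z_*)|=1$, that $|B^*(x)|=1$ (the product $B=\prod_{k\neq0}B_k$ being a genuine Blaschke product, its convergence being equivalent to the Widom condition \eqref{28aug01}), and that $|g'(x,z_*)|=\sum_{\gamma\in\Gamma}\gamma'(x)\,|\gamma(x)-\overline{z_*}|^{-2}$. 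Consequently $|F^*(x)|=\bigl(|x-\overline{z_*}|^2\sum_{\gamma\in\Gamma}\gamma'(x)|\gamma(x)-\overline{z_*}|^{-2}\bigr)^{-1}\le1$ a.e. Now apply the same formulas to the finitely generated subgroups $\Gamma_n\uparrow\Gamma$ of Section~\ref{FZ}: with $F_n:=\dfrac{1}{(z-\overline{z_*})^2}\dfrac{B^{(n)}(z)}{g_n'(z,z_*)}$ one gets $|F_n^*(x)|=\bigl(|x-\overline{z_*}|^2\sum_{\gamma\in\Gamma_n}\gamma'(x)|\gamma(x)-\overline{z_*}|^{-2}\bigr)^{-1}$ a.e., and since the (positive) series increase with $n$ we have the monotone convergence $-\log|F_n^*(x)|\uparrow-\log|F^*(x)|\ge0$ a.e. By the monotone convergence theorem $\int_{\mathbb R}\log|F_n^*|\,P_{z_*}\ \downarrow\ \int_{\mathbb R}\log|F^*|\,P_{z_*}$, while $F_n(z_*)\to F(z_*)$ by Lemma~\ref{L:28aug01} together with $B^{(n)}(z_*)\to B(z_*)$ (the dominated–convergence computation in the proof of Theorem~\ref{Pommerenke}). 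Hence it suffices to establish the mean value identity for each $F_n$, i.e.\ to prove that $F_n$ is outer on the \emph{finitely connected} regular Denjoy domain $\Omega_n$.

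For finitely connected $\Omega_n$ the Schwarz–Christoffel representation of Proposition~\ref{prop12} is a \emph{finite} product, so $g_n'$ — equivalently $\theta'_{\Lambda_n(z_*)}(\Lambda_n(z))\,\Lambda_n'(z)$ — is, up to the explicit Blaschke factor $B^{(n)}$, an explicitly uniformizable function associated with the hyperelliptic curve of the finitely many gaps; one then checks directly that the singular inner factor $I_n$ furnished by Remark~\ref{01sep03} for $\Gamma_n$ is trivial. This last step is exactly where the Carath\'eodory and Frostman theorems recalled in the Appendix enter, as in Pommerenke's original argument, and it is the point I expect to be the main obstacle: one must exclude a singular inner factor supported at the finitely many endpoints $a_k,b_k$ of $E_n$ and along the real boundary of the fundamental domain of $\Gamma_n$, and for this the explicit control of the uniformization of a finite domain (together with the Frostman-type description of nontangential limits of inner functions) is needed; the boundary formula for $|F_n^*|$ above is then combined with the automatic inequality of the first paragraph to force $|I_n(z_*)|=1$, hence $I_n=1$.
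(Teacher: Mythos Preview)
Your reduction to the finitely generated subgroups $\Gamma_n$ is set up correctly (the boundary formula for $|F^*|$, the monotone convergence of $-\log|F_n^*|$, and $F_n(z_*)\to F(z_*)$ all hold), but it shifts the entire burden to the claim ``$F_n$ is outer for each $n$'', which you do not prove. Your last paragraph only gestures at the Schwarz--Christoffel formula and the Appendix theorems; but even for a finitely connected Denjoy domain the statement that $B^{(n)}/((z-\overline{z_*})^2 g_n')$ has no singular inner factor is exactly the content of the theorem, and nothing in Proposition~\ref{prop12} or in the Carath\'eodory/Frostman facts rules out, say, a singular inner factor arising from $\Lambda_n'$. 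So as it stands the argument is circular: you have reduced Theorem~\ref{02sep04} for $\Gamma$ to Theorem~\ref{02sep04} for $\Gamma_n$, and stopped.

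The paper's proof avoids this reduction entirely and works directly with the full group. The missing idea you need is Lemma~\ref{02sep06}: not merely that $u(z)=\sum_{\gamma}|\gamma'(z)|\,|\gamma(z)-\overline{z_*}|^{-2}$ is subharmonic, but that $\log u(z)$ is subharmonic (proved by a Cauchy--Schwarz computation on $\partial_z\partial_{\bar z}\log u_n$ for finite partial sums, then monotone convergence). Together with Lemma~\ref{02sep01}, which is where Frostman actually enters and which gives $|g'(x,z_*)|=2\Im z_*\, u(x)$ a.e., this yields
\[
\log|O(z)|=\frac1\pi\int_{\mathbb R}\log|g'(x,z_*)|\,\frac{\Im z}{|x-z|^2}\,dx\ \ge\ \log\bigl(2\Im z_*\, u(z)\bigr)\ \ge\ \log|g'(z,z_*)|,
\]
the last inequality being the elementary pointwise bound \eqref{02sep05}. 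Hence $|B/I|=|g'/O|\le1$, forcing $I\equiv1$. Note that this argument would equally well prove your base case ``$F_n$ is outer'' --- but once you have Lemma~\ref{02sep06} there is no reason to pass through $\Gamma_n$ at all.
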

\begin{lemma}\label{02sep01}
Let $x\in\mathbb R$.
The nontangential limits $g(x, z_*)$ and $g'(x, z_*)$ exist with $|g(x, z_*)|=1$, $g'(x, z_*)$ finite
if and only if
\begin{equation}\label{02sep02-2019}
\sum\limits_{\gamma\in\Gamma}
\dfrac{|\gamma'(x)|}{|\gamma(x)-\overline{z_*}|^2}<\infty.
\end{equation}
In this case
\begin{equation}\label{02sep02}
\dfrac{1}{i}\dfrac{g'(x, z_*)}{g(x, z_*)}=
|g'(x, z_*)|=2\Im z_*\sum\limits_{\gamma\in\Gamma}
\dfrac{|\gamma'(x)|}{|\gamma(x)-\overline{z_*}|^2}.
\end{equation}
Hence, in our case ($g$ is a Blaschke product, $g'$ is of bounded characteristic) \eqref{02sep02} holds almost everywhere on $\mathbb R$.
\end{lemma}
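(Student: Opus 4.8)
The plan is to reduce the statement to the classical description of the boundary behaviour of a Blaschke product in terms of its zeros, applied to $B:=g(\cdot,z_*)$. Since $\G$ is of convergent type, the orbit $\{\gamma(z_*):\gamma\in\G\}$ obeys the Blaschke condition \eqref{15sep01}, so \eqref{16sep02} exhibits $g(\cdot,z_*)$ as a genuine (hence inner) Blaschke product whose zero set is exactly this orbit. The Frostman and Julia--Carath\'eodory theorems (proved in the Appendix) then tell us that, for $x\in\bbR$, the nontangential limits of $B$ and $B'$ exist with $|B(x)|=1$ and $B'(x)$ finite if and only if
\[
\sum_{\gamma\in\G}\frac{\Im\gamma(z_*)}{|x-\gamma(z_*)|^{2}}<\infty ,
\]
and in that case, differentiating the product \eqref{16sep02} termwise (the normalising constants $C_\gamma$ do not enter the logarithmic derivative) yields
\[
\frac{g'(x,z_*)}{g(x,z_*)}=\sum_{\gamma\in\G}\Bigl(\frac{1}{x-\gamma(z_*)}-\frac{1}{x-\overline{\gamma(z_*)}}\Bigr)=2i\sum_{\gamma\in\G}\frac{\Im\gamma(z_*)}{|x-\gamma(z_*)|^{2}} .
\]

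It remains to translate this ``orbit'' series into the ``derivative'' series \eqref{02sep02-2019} of the statement. For $\gamma=\begin{bmatrix}a&b\\c&d\end{bmatrix}\in\G$ and a real $x$ that is not a pole of $\gamma$ (a countable exceptional set) one has $\gamma(x)\in\bbR$ and $\gamma'(x)=(cx+d)^{-2}>0$, so $|\gamma(x)-\overline{z_*}|=|\gamma(x)-z_*|$; then, using $\gamma(x)-z_*=\gamma(x)-\gamma(\gamma^{-1}(z_*))=\bigl(x-\gamma^{-1}(z_*)\bigr)/\bigl((cx+d)(c\gamma^{-1}(z_*)+d)\bigr)$ together with $ad-bc=1$ and $|c\gamma^{-1}(z_*)+d|^{2}=\Im\gamma^{-1}(z_*)/\Im z_*$, one obtains the termwise identity
\[
\Im z_*\,\frac{|\gamma'(x)|}{|\gamma(x)-\overline{z_*}|^{2}}=\frac{\Im\gamma^{-1}(z_*)}{|x-\gamma^{-1}(z_*)|^{2}} .
\]
Summing over $\G$ and relabelling $\gamma\mapsto\gamma^{-1}$ gives
\[
\Im z_*\sum_{\gamma\in\G}\frac{|\gamma'(x)|}{|\gamma(x)-\overline{z_*}|^{2}}=\sum_{\gamma\in\G}\frac{\Im\gamma(z_*)}{|x-\gamma(z_*)|^{2}} ,
\]
so the two series are finite simultaneously; this proves the equivalence. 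When the common value is finite, $|g(x,z_*)|=1$ and, by the displays of the first paragraph, $g'(x,z_*)/g(x,z_*)$ equals $i$ times the positive number $2\Im z_*\sum_{\gamma}|\gamma'(x)|/|\gamma(x)-\overline{z_*}|^{2}$; hence $|g'(x,z_*)|=g'(x,z_*)/\bigl(i\,g(x,z_*)\bigr)=2\Im z_*\sum_{\gamma}|\gamma'(x)|/|\gamma(x)-\overline{z_*}|^{2}$, which is \eqref{02sep02}.

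For the final assertion, note that in the setting of Theorem \ref{02sep04} the Widom condition \eqref{28aug01} is in force, so $g$ is a Blaschke product --- hence inner, with unimodular nontangential boundary values a.e.\ on $\bbR$ --- while $g'$ is of bounded characteristic by Theorem \ref{Pommerenke} and therefore has finite nontangential limits a.e.\ on $\bbR$. Thus almost every $x\in\bbR$ meets the hypotheses of the equivalence, so \eqref{02sep02} holds for a.e.\ $x$.

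The main obstacle is the ``only if'' half of the dichotomy of the first paragraph: that finiteness of the nontangential limit $g'(x,z_*)$, together with $|g(x,z_*)|=1$, forces $\sum_{\gamma}\Im\gamma(z_*)/|x-\gamma(z_*)|^{2}<\infty$. This is exactly where Julia's lemma (part of the Julia--Carath\'eodory circle) enters: it provides the estimate $1-|g(z,z_*)|=O(\Im z)$ along Stolz angles at $x$, and since $|g(\cdot,z_*)|\le|P_F|$ for every finite subproduct $P_F$ of \eqref{16sep02}, comparing with the elementary lower bound for $1-|P_F|$ and letting $z\to x$ bounds the partial sums of $\sum_{\gamma}\Im\gamma(z_*)/|x-\gamma(z_*)|^{2}$ uniformly in $F$. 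The ``if'' half is softer: convergence of the series makes \eqref{16sep02} and its termwise derivative converge uniformly on Stolz angles at $x$, so $|g(x,z_*)|=1$ there, and one may pass \eqref{16sep03} to the boundary by dominated convergence, obtaining at once the finite limit $g'(x,z_*)$ and \eqref{02sep02}.
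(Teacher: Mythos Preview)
Your proof is correct and, like the paper, rests on the Frostman result (Corollary~\ref{19oct09} of the Appendix). The difference is which factorisation of $g$ you feed into it. You use the standard zero-based representation \eqref{16sep02}, so Frostman gives the criterion $\sum_\gamma\Im\gamma(z_*)/|x-\gamma(z_*)|^2<\infty$, and you then need the change-of-variables identity
\[
\Im z_*\,\frac{|\gamma'(x)|}{|\gamma(x)-\overline{z_*}|^{2}}=\frac{\Im\gamma^{-1}(z_*)}{|x-\gamma^{-1}(z_*)|^{2}}
\]
together with the relabelling $\gamma\mapsto\gamma^{-1}$ to reach \eqref{02sep02-2019}. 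The paper instead invokes the alternative factorisation \eqref{23sep01},
\[
g(z,z_*)=\prod_{\gamma\in\Gamma}\frac{\gamma(z)-z_*}{\gamma(z)-\overline{z_*}}\,\widetilde C_\gamma,
\]
whose individual factors $B_\gamma$ already satisfy $|B'_\gamma(x)|=2\Im z_*\,|\gamma'(x)|/|\gamma(x)-\overline{z_*}|^2$, so Corollary~\ref{19oct09} yields \eqref{02sep02-2019} and \eqref{02sep02} in a single line with no translation step. Your route makes the classical zero-based Frostman criterion visible; the paper's is simply shorter.

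Two minor remarks. First, $g$ is a Blaschke product as soon as $\Gamma$ is of convergent type (guaranteed by regularity of $E$); the Widom condition is not needed for that, only for $g'$ to be of bounded characteristic. Second, your last two paragraphs sketching the Julia--Carath\'eodory mechanism are correct but redundant once Corollary~\ref{19oct09} is cited, since the Appendix already contains that argument.
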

\begin{proof}
This lemma is Corollary \ref{19oct09} of the Appendix with $w=g(z,z_*)$, which is a product of these Blaschke factors
$$
B_\g(z)=\frac{\gamma(z)-z_*}{\gamma(z)-\overline{z_*}}.
$$
\eqref{02sep02} follows since in this case
$$
B'_\g(z)=2i\Im z_*\frac{\gamma'(z)}{(\gamma(z)-\overline{z_*})^2}.
$$
\end{proof}
\begin{lemma}\label{02sep06} For every $z\in\mathbb C_+$ the following inequality holds
\begin{equation}\label{21oct02}
\frac 1 \pi \int\limits_{\mathbb R}\log\sum\limits_{\gamma\in\Gamma}
\dfrac{|\gamma'(x)|}{|\gamma(x)-\overline{z_*}|^2}\frac{\Im z\, dx}{|x-z|^2}
\ge\log\sum\limits_{\gamma\in\Gamma}
\dfrac{|\gamma'(z)|}{|\gamma(z)-\overline{z_*}|^2}.
\end{equation}
\end{lemma}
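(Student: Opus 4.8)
The plan is to reduce the inequality to finite pieces of the series together with one classical estimate. Write $h_\g(z)=\g'(z)/(\g(z)-\overline{z_*})^2$, so that the series in the lemma is $S(z)=\sum_{\g\in\G}|h_\g(z)|$ for $z\in\bbC_+$, and the pointwise series $S(x)=\sum_{\g\in\G}|h_\g(x)|\in(0,+\infty]$ for $x\in\bbR$. A direct computation with the entries of $\g\in SL_2(\bbR)$ gives $h_\g(z)=(\alpha_\g z+\beta_\g)^{-2}$ with $-\beta_\g/\alpha_\g=\overline{\g^{-1}(z_*)}\in\bbC_-$, equivalently
\begin{equation*}
|h_\g(z)|=\frac{\Im\g^{-1}(z_*)}{\Im z_*\,|z-\overline{\g^{-1}(z_*)}|^{2}},\qquad z\in\bbC_+ .
\end{equation*}
Thus every $h_\g$ is a bounded, zero-free holomorphic function on $\bbC_+$, continuous up to $\bbR$; and substituting $\g\mapsto\g^{-1}$ one recognizes $S(z)=\frac1{\Im z_*}\sum_{\g\in\G}\frac{\Im\g(z_*)}{|z-\overline{\g(z_*)}|^{2}}$, which is the series in \eqref{15sep01}, so $S$ converges and is continuous on $\bbC_+$ because $\G$ is of convergent type. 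Since the identity term forces $S(x)\ge|x-\overline{z_*}|^{-2}$ on $\bbR$, the negative part $\log^-S$ is $dm$-integrable, and the right-hand side of the asserted inequality is a well-defined element of $(-\infty,+\infty]$ (it is finite in our situation: by Lemma~\ref{02sep01}, $S(x)=|g'(x,z_*)|/(2\Im z_*)$ a.e., and $g'(\cdot,z_*)$ is of bounded characteristic).

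The core step is the finite case. For a finite subset $F\subset\G$ set $S_F(z)=\sum_{\g\in F}|h_\g(z)|$; all terms being nonnegative, $S_F\uparrow S$ pointwise in $\bbC_+$ as $F\uparrow\G$, so $\log S_F(z)\uparrow\log S(z)$ and it suffices to prove, for each finite $F$,
\begin{equation*}
\log S_F(z)\ \le\ \frac1\pi\int_{\bbR}\log S(x)\,\frac{\Im z\,dx}{|x-z|^2},\qquad z\in\bbC_+ ,
\end{equation*}
the right side being independent of $F$ (note $S_F(x)\le S(x)$ on $\bbR$). Fix $z\in\bbC_+$ and note $S_F(z)=\max_{\t\in\bbT^{F}}\bigl|\sum_{\g\in F}\overline{\t_\g}\,h_\g(z)\bigr|$, with the maximum attained at $\t_\g=h_\g(z)/|h_\g(z)|$ (the term $h_{1_\G}$ never vanishes, so the optimal combination is not identically zero). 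For every $\t$ the function $u_\t:=\sum_{\g\in F}\overline{\t_\g}h_\g$ is bounded holomorphic on $\bbC_+$, hence of Smirnov class, so $\log|u_\t(z)|\le\frac1\pi\int_{\bbR}\log|u_\t(x)|\,\frac{\Im z\,dx}{|x-z|^2}$ (the classical domination of $\log|\cdot|$ by the Poisson integral of its boundary values, see e.g.\ \cite{Nik}), while $|u_\t(x)|\le\sum_{\g\in F}|h_\g(x)|=S_F(x)\le S(x)$ a.e.\ on $\bbR$. Choosing $\t$ optimal at the fixed $z$ gives the displayed inequality; letting $F\uparrow\G$ yields the lemma.

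I do not expect a deep obstacle here; the point that must be respected is that the Smirnov-class domination cannot be applied to the whole series at once: $S$ is not the modulus of a single holomorphic function, and the unimodular weights realizing $S(z)$ as $\bigl|\sum_\g\overline{\t_\g}h_\g(z)\bigr|$ have full support — it is precisely this that forces passing through finite $F$ before invoking the Smirnov class, in the same spirit as the finitely generated approximations used earlier in the paper. Everything else — convergence and subharmonicity of $S$ and of $\log S$, the inequality $\log|u|\le P[\log|u^*|]$ for Smirnov-class $u$, and the boundary identification of Lemma~\ref{02sep01} — is classical or already in hand.
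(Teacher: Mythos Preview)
Your argument is correct, and it reaches the same conclusion by a genuinely different mechanism than the paper's proof. The paper enumerates $\G=\{\g_k\}$, sets $u_n(z)=\sum_{k\le n}|\phi_{\g_k}(z)|^2$ with $\phi_\g(z)=(\g^{21}z+\g^{22})^{-1}(\g(z)-\overline{z_*})^{-1}$, and checks \emph{directly} that $\log u_n$ is subharmonic by computing $\partial_z\partial_{\bar z}\log u_n$ and invoking Cauchy--Schwarz on the vectors $(\phi_{\g_k})$ and $(\phi'_{\g_k})$; the Poisson inequality then follows from subharmonicity, and the passage $n\to\infty$ uses monotone convergence together with the common lower bound $\log|x-\overline{z_*}|^{-2}$. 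Your route instead linearizes: at a fixed $z$ you realize $S_F(z)$ as $|u_\t(z)|$ for a single bounded holomorphic $u_\t=\sum_{\g\in F}\overline{\t_\g}h_\g$, apply the Poisson--Jensen inequality $\log|u_\t(z)|\le P[\log|u_\t^*|](z)$ for $H^\infty$ functions, bound the boundary values by the triangle inequality, and then take $F\uparrow\G$. Your approach is more elementary in that it avoids any Laplacian computation and the Cauchy--Schwarz step, relying only on the triangle inequality and the standard Smirnov-class estimate; the paper's approach, on the other hand, exhibits the subharmonicity of $\log S_F$ globally (not just the Poisson inequality at a single point) and carries over verbatim to the vector-valued analogue used later in Lemma~\ref{17oct17}. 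Both handle the integrability of $\log^- S$ via the identity term the same way.
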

\begin{proof}
Since
$$
\gamma'(z)=\frac{1}{(\gamma^{21}z+\gamma^{22})^2},
$$
one can write
$$
\sum\limits_{\gamma\in\Gamma}
\dfrac{|\gamma'(z)|}{|\gamma(z)-\overline{z_*}|^2}
=
\sum\limits_{\gamma\in\Gamma}\overline{\phi_{\gamma}(z)}\phi_\gamma(z),
$$
where
$$
\phi_{\gamma}(z)=\frac{1}{\gamma^{21}z+\gamma^{22}}\frac{1}{\gamma(z)-\overline{z_*}}.
$$
We enumerate the elements of the group $\G$, $\G=\{\g_k\}$, and consider functions $u_n(z)$ defined by the finite sums
\begin{equation*}
u_n(z)
=\sum\limits_{k=1}^n
\frac{|\gamma'_k(z)|}{|\gamma_k(z)-\overline{z_*}|^2}
=
\sum\limits_{k=1}^n\overline{\phi_{\gamma_k}(z)}\phi_{\gamma_k}(z)
,
\quad  \Im z>0.
\end{equation*}
From here we see that $u_n$ is a subharmonic function since
\begin{equation*}
\frac{\partial^2}{\partial z \partial \overline z}u_n(z)
= \sum\limits_{k=1}^n\overline{\phi'_{\gamma_k}(z)}\phi'_{\gamma_k}(z))\ge 0.
\end{equation*}
Also $\log u_n(z)$ is subharmonic, since
\begin{equation*}
\frac{\partial^2}{\partial z \partial \overline z}\log u_n(z)
= -\frac{1}{u_n^2(z)}\frac{\partial u_n}{\partial z}
\frac{\partial u_n}{\partial \overline z}+\frac{1}{u_n}
\frac{\partial^2 u_n}{\partial z \partial \overline z}=
\end{equation*}
$$
\frac{1}{u_n^2(z)}
\left\{
\sum\limits_{k=1}^n\overline{\phi_{\gamma_k}(z)}\phi_{\gamma_k}(z)
\sum\limits_{k=1}^n\overline{\phi'_{\gamma_k}(z)}\phi'_{\gamma_k}(z)
-
\sum\limits_{k=1}^n\overline{\phi_{\gamma_k}(z)}\phi'_{\gamma_k}(z)
\sum\limits_{k=1}^n\overline{\phi'_{\gamma_k}(z)}\phi_{\gamma_k}(z)
\right\},
$$
which is nonnegative by the Cauchy-Schwarz inequality. Therefore,
$$
\frac 1 \pi \int\limits_{\mathbb R}\log
\sum\limits_{k=1}^n
\frac{|\gamma'_k(x)|}{|\gamma_k(x)-\overline{z_*}|^2}
\frac{\Im z\, dx}{|x-z|^2}
\ge
\log
\sum\limits_{k=1}^n
\frac{|\gamma'_k(z)|}{|\gamma_k(z)-\overline{z_*}|^2}
.
$$
We now pass to the limit in this inequality. Since all integrands here have lower summable bound
$\log\frac{1}{|x-\overline{z_*}|^2}$, 
the Monotone Convergence Theorem applies and we get \eqref{21oct02}.
\end{proof}
\begin{proof}[Proof of Theorem \ref{02sep04}]
It follows from
\begin{equation*}
g'(z, z_*)= (z_*-\overline{z_*})\sum\limits_{\gamma\in\Gamma}
\frac{g(z, z_*)\gamma'(z)}{(\gamma(z)-z_*)(\gamma(z)-\overline{z_*})}
\end{equation*}
that for $z\in\mathbb C_+$
\begin{align}\nonumber\label{02sep05}
|g'(z, z_*)|= & 2\Im z_*
\left|
\sum\limits_{\gamma\in\Gamma}\frac{g(z, z_*)\gamma'(z)}{(\gamma(z)-z_*)(\gamma(z)-\overline{z_*})}
\right|
\\
\le &2\Im z_*
\sum\limits_{\gamma\in\Gamma}\left|\frac{g(z, z_*)\gamma'(z)}{(\gamma(z)-z_*)(\gamma(z)-\overline{z_*})}\right|
\le 2\Im z_*
\sum\limits_{\gamma\in\Gamma}
\dfrac{|\gamma'(z)|}{|\gamma(z)-\overline{z_*}|^2}.
\end{align}
We used here only one of the factors of $g(z, z_*)$ in every term. Now, by Lemmas \ref{02sep01} and \ref{02sep06},
\begin{align*}
\frac 1 \pi \int\limits_{\mathbb R}\log \frac{|g'(x, z_*)|}{2\Im z_*}\frac{\Im z}{|x-z|^2}dx
=&
\frac 1 \pi \int\limits_{\mathbb R}\log\sum\limits_{\gamma\in\Gamma}
\dfrac{|\gamma'(x)|}{|\gamma(x)-\overline{z_*}|^2}\frac{\Im z}{|x-z|^2}dx
\\
\ge & \log\sum\limits_{\gamma\in\Gamma}
\dfrac{|\gamma'(z)|}{|\gamma(z)-\overline{z_*}|^2}.
\end{align*}
That is,
$$
\frac 1 \pi \int\limits_{\mathbb R}\log |g'(x, z_*)|\frac{\Im z}{|x-z|^2}dx
\ge  \log 2\Im z_*\sum\limits_{\gamma\in\Gamma}
\dfrac{|\gamma'(z)|}{|\gamma(z)-\overline{z_*}|^2}.
$$
On the other hand
\begin{equation*}
\frac 1 \pi\int\limits_{\mathbb R}\log |g'(x, z_*)|\frac{\Im z}{|x-z|^2}dx
=
\frac 1  \pi\int\limits_{\mathbb R}\log|O(x)|\frac{\Im z}{|x-z|^2}dx = \log|O(z)|,
\end{equation*}
since $O$ is a ratio of two bounded outer functions. Thus,
\begin{equation}\label{02sep09}
 2\Im z_*\sum\limits_{\gamma\in\Gamma}
\dfrac{|\gamma'(z)|}{|\gamma(z)-\overline{z_*}|^2}\le |O(z)|,\quad z\in \mathbb C_+.
\end{equation}
Combining \eqref{02sep09} and \eqref{02sep05}, we get
$$
|g'(z,z_*)|\le |O(z)|,\quad z\in \mathbb C_+.
$$
That is, in view of \eqref{02sep10},
$$
\left|\frac{B(z)}{I(z)}\right|
=
\left|\frac{g'(z,z_*)}{O(z)}\right|
\le 1.
$$
The latter implies that $I(z)=1$.
\end{proof}

\section{Conditions ($b$) and (B)  in Theorem \ref{mth-15oct01}.}\label{sec5}

\subsection{ Martin Function with a pure point measure}\label{DM}

Recall that  (see \eqref{11sep7}, \eqref{27dec06}) $M(\l)=\Im\t(\l)$, $\l\in\O$ and that $m(z)=\t(\Lambda(z))$. Thus,
$$
M(\L(z))=\Im m(z).
$$
$m(z)$ is a  single-valued holomorphic function  defined in $\mathbb C_+$, additively character automorphic with respect to $\Gamma$.
$\Im m(z)\ge 0$ for all $z\in\bbC_+$.
Therefore, $m(z)$ admits  a Riesz-Herglotz representation
\begin{equation*}
m(z)=az + b +
\int\limits_{\mathbb R}
\left(
\frac{1}{x-z}
-\frac{x}{1+x^2}
\right)
\sigma(dx),
\end{equation*}
where $a\ge 0$, $b$ is real and $\sigma$ is a singular measure on $\mathbb R$ with
$$
\int\limits_{\mathbb R}
\frac{\sigma(dx)}{1+x^2}<\infty.
$$
Let us mention that $e^{i\ell m(z)}$ is a singular inner character automorphic function, for all $\ell>0$.

We observe (see, e.g.  \cite{VYM}) that for Martin functions in Denjoy domains there are two options: either $a>0$ (that is, ($b_1$) of \eqref{27dec04} holds)
and $\sigma$ is a pure point measure (that is, (b) of Theorem \ref{mth-15oct01} holds), supported by orbits of $\infty$ and $0$;
or $a=0$ (that is, ($b_1$) of \eqref{27dec04} fails) and $\sigma$ is a continuous singular measure (that is, (b) of Theorem \ref{mth-15oct01} fails).
For further references we state it as
\begin{proposition}\label{27dec05}
Properties (b) of Theorem \ref{mth-15oct01} and ($b_1$) of \eqref{27dec04} are equivalent.
\end{proposition}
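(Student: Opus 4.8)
The plan is to route both properties through the coefficient $a$ of the linear term in the Riesz–Herglotz representation of $m$, and then to invoke the structural dichotomy for Martin functions of Denjoy domains that was recalled just above.

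First I would dispose of the equivalence of ($b_1$) of \eqref{27dec04} with $a>0$, which is soft. From the Riesz–Herglotz representation one has, for $y>0$,
\[
\frac{\Im m(iy)}{y}=a+\int_{\bbR}\frac{\sigma(dx)}{x^{2}+y^{2}} .
\]
For $y\ge 1$ the integrand is bounded by $\frac{1}{1+x^{2}}$, which is $\sigma$-integrable by the normalization of $\sigma$, and it tends to $0$ pointwise as $y\to+\infty$; dominated convergence then gives $\lim_{y\to+\infty}\Im m(iy)/y=a$. Since $\Im m(iy)=M(\L(iy))$ by \eqref{27dec06}, this reads $\lim_{y\to+\infty}M(\L(iy))/y=a$, so ($b_1$) is exactly the statement that $a>0$.

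It then remains to show that $a>0$ holds iff $\sigma$ is purely atomic, i.e. iff ($b$) of Theorem \ref{mth-15oct01} holds. Here I would exploit that $\varphi_\ell(z)=e^{i\ell m(z)}$, $\ell>0$, is a character automorphic singular inner function whose total singular measure is $\ell\sigma$ together with a point mass of size proportional to $\ell a$ at infinity, and I would read off the nature of $\sigma$ from the uniformization/comb picture of $m$: the image of $m$ is the upper half-plane slit along countably many vertical segments whose feet lie over the critical points $\mu_j$, and $\sigma$ is the $\G$-conformal boundary measure attached to the boundary point of $\O$ at infinity (equivalently, to the orbit of the two distinguished points $0,\infty$ of the cover). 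Such a measure is either purely atomic, supported exactly on that orbit — the regime in which the channel of the comb stays open at infinity, so that $\infty$ is a cusp and $a>0$ — or else non-atomic, carried by the limit set of $\G$, in which case the channel closes and $a=0$. This classification is the content of \cite{VYM} (see Theorem~5 and Lemma~1 there) together with the explicit description of the admissible functions $m$ developed in Subsection~\ref{DM}; combined with the first step it yields the assertion.

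I expect the only real difficulty to be this second step. The implication ``$a=0\Rightarrow\sigma$ has no atoms at all'' is simply false for general Herglotz functions — an atomic measure whose atoms run off to infinity also has $a=0$ — so it cannot be obtained from soft function theory and must be extracted from the rigidity of the uniformization of a regular Denjoy domain and the structure of its symmetric Martin function. That rigidity is precisely what is packaged in the dichotomy quoted above, and making it self-contained, rather than importing it from \cite{VYM}, would be the main work behind the proposition.
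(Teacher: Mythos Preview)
Your proposal is correct and matches the paper's own treatment essentially line for line. The paper does not give a self-contained proof either: it records the Riesz--Herglotz representation of $m$, observes (citing \cite{VYM}) the dichotomy ``either $a>0$ and $\sigma$ is pure point, or $a=0$ and $\sigma$ is continuous singular,'' and then simply packages this observation as the proposition. Your first step, the dominated-convergence computation identifying $\lim_{y\to\infty}\Im m(iy)/y$ with $a$, is a bit more explicit than the paper (which takes it for granted), and your honest assessment that the second step is the real content and must be imported from \cite{VYM} is exactly right---the paper imports it too.
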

\noindent
We point out that the orbits $\{\g(0)\}_{\g\in\G}$ and $\{\g(\infty)\}_{\g\in\G}$ cannot intersect due to the structure of the generators of the group $\Gamma$.

We start with a singular function supported by the orbit of $\infty$,
\begin{equation}\label{25aug01}
m_+(z)=z+
\sum\limits_{\gamma\in\Gamma\ \gamma\ne 1}
\left(
\frac{1}{\gamma(\infty)-z}
-\frac{\gamma(\infty)}{1+\gamma(\infty)^2}
\right)
\sigma_\gamma
\end{equation}
where
\begin{equation}\label{22oct01}
\sum\limits_{\gamma\in\Gamma\ \gamma\ne 1}
\frac{\sigma_\gamma}{1+\gamma(\infty)^2}
<\infty.
\end{equation}
\begin{lemma}\label{l230ct}
The function $m_+(z)$ defined in \eqref{25aug01} is additive character automorphic with respect to the group $\G$ if and only if
\begin{equation}\label{25aug01-tilde3}
\sigma_\gamma=\frac{1}{(\gamma^{21})^2}.
\end{equation}
Respectively,
\begin{equation}\label{12oct06}
\sum\limits_{\gamma\in\Gamma}
\frac{\sigma_\gamma}{1+\gamma(\infty)^2}
=
\sum\limits_{\gamma\in\Gamma}
\frac{1}{(\gamma^{11})^2+(\gamma^{21})^2}
<\infty,
\end{equation}
and
\begin{equation}\label{10oct02}
m_+(z)=\sum\limits_{\gamma\in\Gamma}
\left(\gamma(z)-\Re\gamma(i)\right)
.
\end{equation}
\end{lemma}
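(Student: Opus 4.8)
The plan is to reduce the whole lemma to two elementary identities for the Möbius maps $\gamma\in\Gamma$, and then to read off the ``if'' direction by a reindexing of the series and the ``only if'' direction by extracting a residue forced by the automorphy relation. First I would record, for every $\gamma\ne 1_\Gamma$ — and here one uses the structural fact that in the present situation no nontrivial element of $\Gamma$ fixes $\infty$, so $\gamma^{21}\ne0$ and the points $\gamma(\infty)$ are pairwise distinct — the partial fraction decomposition (a one-line consequence of $\det\gamma=1$)
\begin{equation*}
\gamma(z)=\gamma(\infty)-\frac{1}{(\gamma^{21})^{2}\bigl(z-\gamma^{-1}(\infty)\bigr)},\qquad \gamma(\infty)=\frac{\gamma^{11}}{\gamma^{21}},\quad \gamma^{-1}(\infty)=-\frac{\gamma^{22}}{\gamma^{21}},
\end{equation*}
together with the identities
\begin{equation*}
\frac{1}{(\gamma^{21})^{2}\bigl(1+\gamma(\infty)^{2}\bigr)}=\frac{1}{(\gamma^{11})^{2}+(\gamma^{21})^{2}}=\Im\gamma^{-1}(i),\qquad \Re\gamma(i)=\gamma(\infty)+\frac{\gamma^{-1}(\infty)}{(\gamma^{21})^{2}\bigl(1+\gamma^{-1}(\infty)^{2}\bigr)} .
\end{equation*}
Reindexing $\gamma\mapsto\gamma^{-1}$ gives in particular $\sum_\gamma\Im\gamma(i)=\sum_\gamma\Im\gamma^{-1}(i)=\sum_\gamma 1/\bigl((\gamma^{11})^{2}+(\gamma^{21})^{2}\bigr)$.

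For the ``if'' direction I would substitute $\sigma_\gamma=1/(\gamma^{21})^{2}$ into \eqref{25aug01}: the first identity turns \eqref{22oct01} into \eqref{12oct06}, and reindexing the series by $\gamma\mapsto\gamma^{-1}$ and applying the partial fraction decomposition and the formula for $\Re\gamma(i)$ turns the $\gamma$-th summand into exactly $\gamma(z)-\Re\gamma(i)$, the leading $z$ being the $\gamma=1_\Gamma$ term; this is \eqref{10oct02}. Then I would check that $\sum_\gamma(\gamma(z)-\Re\gamma(i))$ is additive character automorphic. It converges locally uniformly on $\mathbb{C}_+$ because $|\gamma(z)-\Re\gamma(i)|\le C(z)\,\Im\gamma(i)$ (write $\gamma(z)-\Re\gamma(i)=(\gamma(z)-\gamma(i))+i\,\Im\gamma(i)$ and estimate $\gamma(z)-\gamma(i)=\int_i^z\gamma'$ using $|\gamma'|=\Im\gamma/\Im(\cdot)$ and Harnack's inequality for the positive harmonic function $\Im\gamma$) and $\sum_\gamma\Im\gamma(i)<\infty$ by \eqref{12oct06}; and for $\delta\in\Gamma$, reindexing $\gamma\mapsto\gamma\delta$ shows that $\sum_\gamma(\gamma(\delta z)-\Re\gamma(i))$ differs from $\sum_\gamma(\gamma(z)-\Re\gamma(i))$ by the $z$-independent quantity $\sum_\gamma(\Re\gamma(i)-\Re\gamma(\delta^{-1}(i)))$, absolutely convergent by the same estimate along the fixed path from $i$ to $\delta^{-1}(i)$. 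Hence $m_+(\delta z)=m_+(z)+\eta(\delta)$ with an additive $\eta:\Gamma\to\mathbb R$.

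For the ``only if'' direction, assume conversely $m_+(\delta z)=m_+(z)+\eta(\delta)$, $\eta(\delta)\in\mathbb R$, for all $\delta\in\Gamma$. Fix $\delta\ne 1_\Gamma$ and put $x_0:=\delta^{-1}(\infty)\in\mathbb R$. Since $x_0=\gamma(\infty)$ only for $\gamma=\delta^{-1}$, the standard atom-extraction for Herglotz functions gives $\lim_{z\to x_0}(z-x_0)\,m_+(z)=-\sigma_{\delta^{-1}}$. On the other hand, along $z=x_0+it$ with $t\downarrow0$, the partial fraction decomposition gives $(z-x_0)\,\delta(z)\to-1/(\delta^{21})^{2}$, while $(z-x_0)$ times the remaining Riesz--Herglotz integral of $m_+$ evaluated at $\delta(z)$ tends to $0$ — a dominated-convergence estimate as $\delta(z)\to\infty$ along the vertical line $\Re=\delta(\infty)$, made legitimate by the fact that $(1+x^2)^{-1}d\sigma$ is a \emph{finite} measure, so only a bounded dominating function is needed — whence $\lim_{z\to x_0}(z-x_0)\,m_+(\delta z)=-1/(\delta^{21})^{2}$. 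Subtracting and using $(z-x_0)\,\eta(\delta)\to0$ yields $\sigma_{\delta^{-1}}=1/(\delta^{21})^{2}=1/\bigl((\delta^{-1})^{21}\bigr)^{2}$; as $\delta$ runs over $\Gamma$ this is \eqref{25aug01-tilde3}. The remaining assertion \eqref{12oct06} is then just the first displayed identity summed over $\gamma$, with finiteness supplied by \eqref{22oct01}.

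I expect the main obstacle to be that second limit in the ``only if'' argument: one must be certain that $x_0=\delta^{-1}(\infty)$ carries exactly one atom of $\sigma$ (this is precisely where the structural fact that no nontrivial element of $\Gamma$ fixes $\infty$ enters), and one must control the tail of the Riesz--Herglotz integral of $m_+$ as its argument escapes to infinity — delicate only because the atoms of $\sigma$ accumulate on all of $E$, so the finiteness of $\int(1+x^2)^{-1}d\sigma$ must be used with some care. By contrast, the ``if'' direction is essentially bookkeeping once the partial fraction decomposition and the $\gamma\mapsto\gamma^{-1}$ trick are in hand.
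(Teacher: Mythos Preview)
Your proof is correct, and for the ``if'' direction and the derivation of \eqref{10oct02} it coincides with the paper's argument: substitute $\sigma_\gamma=1/(\gamma^{21})^2$, reindex $\gamma\mapsto\gamma^{-1}$ using the partial-fraction identity for $\gamma(z)$, and identify the centering constant as $\Re\gamma(i)$. You simply flesh out the automorphy check (Harnack bound and the $\gamma\mapsto\gamma\delta$ reindexing) where the paper just says ``follows directly from representation \eqref{10oct02}.''

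The genuine difference is in the ``only if'' direction. The paper substitutes $\gamma(z)$ into \eqref{25aug01} and reads off the coefficient of $z$: only the single term $\tilde\gamma=\gamma$ produces a linear-in-$z$ contribution, namely $\sigma_\gamma(\gamma^{21})^2 z$, and since $m_+(\gamma(z))-m_+(z)$ is a real constant that coefficient must be $1$. You instead fix $\delta$ and compare residues at the finite point $x_0=\delta^{-1}(\infty)$, matching the atom $\sigma_{\delta^{-1}}$ of $m_+$ against the principal part $-1/(\delta^{21})^2$ of $\delta(z)$ coming through $m_+(\delta z)$. These are dual pictures---behavior at $\infty$ versus behavior at a finite pole---and each needs a tail estimate (for the paper, that the remaining terms contribute $o(|z|)$; for you, the dominated-convergence argument you sketch). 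The paper's route is a line shorter to write; yours makes the analytic justification more visible and does not appeal to the Riesz--Herglotz $a$-coefficient being unique. Both are fine.
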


\begin{proof}
Since
$$
\gamma(z)=\frac{\gamma^{11}z+\gamma^{12}}{\gamma^{21}z+\gamma^{22}},\quad \gamma\in SL_2(\mathbb R),
$$
we have
$
\gamma(\infty)={\gamma^{11}}/{\gamma^{21}}.
$
Note that for every $\gamma\in\Gamma$, $\gamma^{11}\ne 0$ (since $\infty$ is not carried to $0$) and $\gamma^{22}\ne 0$ (since $0$ is not carried to $\infty$); also for $\gamma\ne 1$, $\gamma^{12}\ne 0$
(since $0$ is not a fixed point) and $\gamma^{21}\ne 0$ (since $\infty$ is not a fixed point).
So, let $m_+(z)$ be defined by \eqref{25aug01}
\begin{equation}\label{25aug01-tilde}
m_+(z)=z+
\sum\limits_{\widetilde\gamma\in\Gamma\ \widetilde\gamma\ne 1}
\left(
\frac{1}{\widetilde\gamma(\infty)-z}
-\frac{\widetilde\gamma(\infty)}{1+\widetilde\gamma(\infty)^2}
\right)
\sigma_{\widetilde\gamma}.
\end{equation}
Let $\gamma\in\Gamma$, then $m_+(\gamma(z))$ is the same as $m_+(z)$ up to a real additive constant.
Let $\gamma\ne 1_\G$. We substitute $\gamma(z)$ instead of $z$ in \eqref{25aug01-tilde} and
we consider the term with $\widetilde\gamma=\gamma$. 
We have
$$
\frac{1}{\gamma(\infty)-\gamma(z)}=
\left({\frac{\gamma^{11}}{\gamma^{21}}-\frac{\gamma^{11}z+\gamma^{12}}{\gamma^{21}z+\gamma^{22}}}\right)^{-1}=
(\gamma^{21})^2 z + \gamma^{21}\gamma^{22}.
$$
Since the coefficient of $z$ in $m(\gamma(z))$ must be equal to $1$,
we get \eqref{25aug01-tilde3}; then \eqref{12oct06} follows from \eqref{22oct01}.
Thus, we can write
\be\label{18dec01}
m_+(z)=
z+\sum\limits_{\gamma\in\Gamma\ \gamma\ne 1}\left(
\frac{1}{\gamma(\infty)-z}
-\frac{\gamma(\infty)}{1+\gamma(\infty)^2}
\right)
\frac{1}{(\gamma^{21})^2}
\ee
$$
=
z+\sum\limits_{\gamma\in\Gamma\ \gamma\ne 1}\left(
\frac{1}{\gamma^{11}-\gamma^{21}z}\
\frac{1}{\gamma^{21}}
-\frac{\gamma(\infty)}{1+\gamma(\infty)^2}\
\frac{1}{(\gamma^{21})^2}
\right).
$$
Since $\g\in SL_2(\bbR)$, we have
\begin{equation*}
\gamma^{-1}=\begin{bmatrix}\gamma^{22} & -\gamma^{12}\\ -\gamma^{21} & \gamma^{11}\end{bmatrix}\quad
\text{for}\quad
\gamma=\begin{bmatrix}\gamma^{11} & \gamma^{12}\\ \gamma^{21} & \gamma^{22}\end{bmatrix}.
\end{equation*}
Then we can further rewrite
$$
m_+(z)=
z+\sum\limits_{\gamma\in\Gamma\ \gamma\ne 1}\left(
\gamma^{-1}(z)-\gamma^{-1}(\infty)
-\frac{\gamma(\infty)}{1+\gamma(\infty)^2}
\frac{1}{(\gamma^{21})^2}
\right)
$$
\be\label{18dec02}
=
\sum\limits_{\gamma\in\Gamma}
\left(\gamma^{-1}(z)-c(\gamma^{-1})\right)
=
\sum\limits_{\gamma\in\Gamma}
\left(\gamma(z)-c(\gamma)\right)
,
\ee
where $c(1_\G)=0$ and for $\gamma\ne 1_\G$
$$
c(\gamma)=
\gamma(\infty)
+\frac{\gamma^{-1}(\infty)}{1+\gamma^{-1}(\infty)^2}
\frac{1}{(-\gamma^{21})^2}
=\frac{\gamma^{11}}{\gamma^{21}}-\frac{\gamma^{22}}{\gamma^{21}}
\cdot\frac{1}{(\gamma^{22})^2+(\gamma^{21})^2}.
$$
Actually, since
$$
\Re\left(
\frac{1}{\gamma(\infty)-i}
-\frac{\gamma(\infty)}{1+\gamma(\infty)^2}
\right)
=
\Re\left(
\frac{\gamma(\infty)+i}{1+\gamma(\infty)^2}
-\frac{\gamma(\infty)}{1+\gamma(\infty)^2}
\right)
=0,
$$
we get from \eqref{18dec01} and \eqref{18dec02} that
$$
c(\g)=\Re\gamma(i)=\frac{\gamma^{12}\gamma^{22}+\gamma^{11}\gamma^{21}}{(\gamma^{22})^2+(\gamma^{21})^2}.
$$
That is, we get \eqref{10oct02}.
The fact that the function $m_+(z)$ is additively character automorphic follows directly from representation
\eqref{10oct02}.
\end{proof}

The convergence in \eqref{10oct02} is absolute and uniform as long as $z$ is bounded away from the orbit of $\infty$.
We also see that
$$
m'_+(z)=\sum\limits_{\gamma\in\Gamma}\gamma'(z)
.
$$
the convergence here is also absolute and uniform as long as $z$ is bounded away from the orbit of $\infty$.
By  \eqref{10oct02}, we get
\begin{equation*}
\Im m_+(z)=
\sum\limits_{\gamma\in\Gamma}\Im\gamma(z)
=
\sum\limits_{\gamma\in\Gamma}
\frac{\Im z}{|\g^{21}z+\g^{22}|^2}
=\Im z\sum\limits_{\gamma\in\Gamma}|\g'(z)|.
\end{equation*}
Thus
\begin{equation}\label{2sepp}
\frac{\Im m_+(z)}{\Im z}
=\sum\limits_{\gamma\in\Gamma}
\frac{1}{|\g^{21}z+\g^{22}|^2}
=\sum\limits_{\gamma\in\Gamma}|\g'(z)|
.
\end{equation}

The antiholomorphic automorphism $\l\mapsto \overline{\l}$ on $\O$ acts as
$z \mapsto 1/\overline{z}$  on the universal covering $\mathbb C_+$.
Thus, the symmetric Martin function $m(z)$ of the group $\G$ possesses
the following property
$$
\Im m(1/\overline z)=\Im m(z).
$$
Let us   define $m_-(z)$ so that
$$
\Im m_-(z)= \Im m_+(1/\overline z)
$$
In this case,
\begin{equation}\label{12oct02}
\Im m_-(z)= \Im m_+(1/\overline z)
=
\sum\limits_{\gamma\in\Gamma}
\frac{\Im 1/\overline z}{\left|\dfrac{\g^{21}}{\overline z}+\g^{22}\right|^2}
=
\sum\limits_{\gamma\in\Gamma}
\frac{\Im z}{|\g^{21}+\g^{22}z|^2}.
\end{equation}
\begin{lemma}\label{18dec03}
Group $\Gamma$ has the following symmetry. Let
\begin{equation}\label{26sep13}
\tau(z)=\frac{1}{\overline z}.
\end{equation}
This is a reflection about the unit circle. Also $\tau:\mathbb C_+\to\mathbb C_+$. Then
\begin{equation}\label{26sep14}
\gamma\mapsto \widetilde\gamma=\tau\gamma\tau
\end{equation}
is an automorphism (one-to-one and onto) of $\Gamma$.
In the matrix form this automorphism reads as follows
\begin{equation}\label{26sep08}
\gamma=\begin{bmatrix}\gamma^{11} & \gamma^{12}\\ \gamma^{21} & \gamma^{22}\end{bmatrix}
\mapsto
\widetilde\gamma=\begin{bmatrix}\gamma^{22} & \gamma^{21}\\ \gamma^{12} &  \gamma^{11}\end{bmatrix}.
\end{equation}
\end{lemma}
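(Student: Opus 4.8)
The plan is to prove the two assertions of the lemma separately: first the explicit matrix description \eqref{26sep08}, which reduces to a one-line computation, and then, with its help, the claim that \eqref{26sep14} is a bijective automorphism of $\Gamma$. For the matrix form I would simply compute $\tau\gamma\tau(z)$. Writing $\gamma(z)=(\gamma^{11}z+\gamma^{12})/(\gamma^{21}z+\gamma^{22})$ and $\tau(z)=1/\bar z$, one gets $\gamma(\tau(z))=(\gamma^{11}+\gamma^{12}\bar z)/(\gamma^{21}+\gamma^{22}\bar z)$, and then, taking $\tau$ of this and using that the $\gamma^{ij}$ are real,
$$
\tau\gamma\tau(z)=\frac{\gamma^{21}+\gamma^{22}z}{\gamma^{11}+\gamma^{12}z}=\frac{\gamma^{22}z+\gamma^{21}}{\gamma^{12}z+\gamma^{11}},
$$
which is the M\"obius transformation with matrix $\begin{bmatrix}\gamma^{22}&\gamma^{21}\\\gamma^{12}&\gamma^{11}\end{bmatrix}$, i.e.\ \eqref{26sep08}. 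I would note that this matrix equals $J\gamma J$ with $J=\begin{bmatrix}0&1\\1&0\end{bmatrix}$; since $J\in GL_2(\bbR)$ and $\det(J\gamma J)=1$, the assignment $\gamma\mapsto\widetilde\gamma=J\gamma J$ is an automorphism of $SL_2(\bbR)$, and in particular $\widetilde\gamma$ is again a real M\"obius transformation. This also records the auxiliary facts that $\tau$ maps $\bbC_+$ into itself (since $\Im(1/\bar z)=\Im z/|z|^2$), that $\tau\gamma\tau$ is genuinely holomorphic (two antiholomorphic factors and one holomorphic factor), and that $\tau^2=\mathrm{id}$.

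It then remains to show $\tau\Gamma\tau=\Gamma$. Here I would invoke the description of the generators of $\Gamma$ recalled in Section~\ref{FZ}: $\Gamma$ is generated by the M\"obius maps $R_0R_j$, $j\ne 0$, where $R_j$ is the reflection about the $j$-th boundary semicircle of $\cF$ and $R_0$ the reflection about the $0$-th one. But the $0$-th semicircle was normalized to be the unit circle centered at the origin, whose reflection is exactly $z\mapsto 1/\bar z$; hence $\tau=R_0$. Since $R_0$ and each $R_j$ are involutions, $\tau(R_0R_j)\tau=R_0(R_0R_j)R_0=R_jR_0=(R_0R_j)^{-1}\in\Gamma$, and likewise for the inverse generators. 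As $\gamma\mapsto\tau\gamma\tau$ is a homomorphism (of the full M\"obius group), it therefore maps the subgroup generated by these elements into itself, i.e.\ $\tau\Gamma\tau\subseteq\Gamma$; applying this to $\tau^{-1}=\tau$ gives the reverse inclusion, so $\tau\Gamma\tau=\Gamma$ and \eqref{26sep14} is a self-inverse, hence bijective, automorphism of $\Gamma$. An equivalent, generator-free route is the standard covering-space argument: conjugation $\jmath\colon\lambda\mapsto\bar\lambda$ is an automorphism of $\O=\bbC\setminus E$ because $E=\bar E$, and $\tau$ is a lift of it through $\Lambda$, i.e.\ $\Lambda\circ\tau=\jmath\circ\Lambda$ (the fact quoted just before the lemma); then for $\gamma\in\Gamma$ the holomorphic map $\tau\gamma\tau$ satisfies $\Lambda\circ(\tau\gamma\tau)=\jmath\circ\Lambda\circ\gamma\circ\tau=\jmath\circ\Lambda\circ\tau=\jmath\circ\jmath\circ\Lambda=\Lambda$, so it is a deck transformation and hence lies in $\Gamma$.

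I do not expect a genuine obstacle here. The only points that need a moment of care are the holomorphic/antiholomorphic parity bookkeeping in the first step, so that $\widetilde\gamma$ comes out as an honest element of $SL_2(\bbR)$ rather than as a conjugate-linear object, and, in the generator version of the second step, correctly invoking the normalization ``$0$-th semicircle $=$ unit circle'' so that the abstract reflection $R_0$ is literally $\tau$. Everything else is either the displayed computation or a citation of material already set up in Section~\ref{FZ} and in the paragraph preceding the lemma.
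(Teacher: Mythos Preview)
Your proof is correct and follows essentially the same approach as the paper: the paper's entire argument is the single sentence that the lemma ``follows from the observation that every generator of the group $\Gamma$ is a composition of two reflections about the boundary semicircles of $\mathcal F$,'' which is exactly your generator argument with $\tau=R_0$. Your explicit verification of \eqref{26sep08} and the alternative covering-space argument are correct additions that the paper omits.
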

\begin{proof}
Lemma follows from the observation that every generator of the group $\Gamma$ is a composition of two reflections about the boundary semicircles of $\mathcal F$.
\end{proof}
In view of this lemma, we may continue \eqref{12oct02} (compare with \eqref{2sepp}) as
\begin{equation*}
\Im m_-(z)
=
\sum\limits_{\gamma\in\Gamma}
\frac{\Im z}{|\g^{21}+\g^{22}z|^2}
=
-\sum\limits_{\gamma\in\Gamma}
\Im\frac{1}{\widetilde\gamma(z)}
=
-\sum\limits_{\gamma\in\Gamma}
\Im\frac{1}{\gamma(z)}
.
\end{equation*}
Therefore,
$$
m_-(z)=-\sum\limits_{\gamma\in\Gamma}
\left(\frac{1}{\gamma(z)}
-
\Re\frac{1}{\gamma(i)}\right).
$$
This function also admits representation of type \eqref{25aug01}
\begin{equation}\label{12oct04}
m_-(z)=
\sum\limits_{\gamma\in\Gamma}
\left(
\frac{1}{\gamma(0)-z}
-\frac{\gamma(0)}{1+\gamma(0)^2}
\right)
\beta_\gamma,\quad \text{where}\ \beta_\gamma=\frac{1}{(\gamma^{22})^2}.
\end{equation}
The corresponding convergence condition reads as follows
\begin{equation}\label{12oct05}
\sum\limits_{\gamma\in\Gamma}
\frac{\beta_\gamma}{1+\gamma(0)^2}
=
\sum\limits_{\gamma\in\Gamma}
\frac{1}{(\gamma^{12})^2+(\gamma^{22})^2}<\infty
,
\end{equation}
which is equivalent to \eqref{12oct06} in view of Lemma \ref{18dec03}.
Finally, we arrive at the following proposition.
\begin{proposition}\label{pr23oct}
($b$) in Theorem \ref{mth-15oct01} holds if and only if
\begin{equation}\label{23oct1}
\sum_{\g\in\G}{\Im \g(i)}=\sum_{\g\in\G}{|\g'(i)|}<\infty.
\end{equation}
In this case the symmetric Martin function of the group $\G$  is given by
\begin{equation}\label{26sep04}
m(z)=m_+(z)+m_-(z)=\sum\limits_{\gamma\in\Gamma}
\left(\gamma(z)-\frac{1}{\gamma(z)}\right)
-
\Re\left(
\gamma(i)
-\frac{1}{\gamma(i)}\right).
\end{equation}
Moreover,
\begin{equation}\label{26sep05}
m'(z)=\sum\limits_{\gamma\in\Gamma}\gamma'(z)+\sum\limits_{\gamma\in\Gamma}
\frac{\gamma'(z)}{\gamma^2 (z)}
=
\sum\limits_{\gamma\in\Gamma}
\frac{1}{(\g^{21}z+\g^{22})^2}
+
\sum\limits_{\gamma\in\Gamma}
\frac{1}{(\g^{11}z+\g^{12})^2}
\end{equation}
and
\begin{equation}\label{26sep06}
\frac{\Im m(z)}{\Im z}
=
\sum\limits_{\gamma\in\Gamma}|\g'(z)|
+
\sum\limits_{\gamma\in\Gamma}\left|\frac{\g'(z)}{\g^2(z)}\right|
=
\sum\limits_{\gamma\in\Gamma}
\frac{1}{|\g^{21}z+\g^{22}|^2}
+
\sum\limits_{\gamma\in\Gamma}
\frac{1}{|\g^{11}z+\g^{12}|^2}.
\end{equation}
\end{proposition}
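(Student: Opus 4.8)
The plan is to read the proposition off the explicit descriptions of $m_+$ and $m_-$ obtained above, together with Proposition~\ref{27dec05} and the dichotomy for Martin functions in Denjoy domains recalled before it. Throughout I use that $\Im m\ge 0$, so $m$ has a Riesz--Herglotz representation; that the symmetric Martin function is unique up to a positive factor, so $a$ may be normalized to $1$ once $a>0$ is known; and that an inessential real additive constant in $m$ affects neither $\Im m$, nor $m'$, nor the additive character $\eta$.

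I would prove the equivalence first. Assume (b). By Proposition~\ref{27dec05} this is condition ($b_1$), i.e. $a>0$; normalize $a=1$. By the dichotomy, $\sigma$ is then a pure point measure carried by the two disjoint orbits $\{\gamma(\infty)\}_{\gamma\in\G}$ and $\{\gamma(0)\}_{\gamma\in\G}$. Split $m=m^{(\infty)}+m^{(0)}$, where $m^{(\infty)}$ consists of the $z$--term together with the atoms on the orbit of $\infty$ and $m^{(0)}$ of the atoms on the orbit of $0$. Expanding both sides of $m(\gamma(z))=m(z)+\eta(\gamma)$ in Riesz--Herglotz form and comparing the coefficient of $z$, as in the proof of Lemma~\ref{l230ct}: since the orbits are disjoint, $\gamma(\infty)$ is not a pole of $m^{(0)}$, so $m^{(0)}(\gamma(z))$ stays bounded as $z\to\infty$, while near its pole $\gamma(\infty)$ one has $\gamma(z)-\gamma(\infty)\sim-\frac{1}{(\gamma^{21})^2\,z}$; hence the atom of $m^{(\infty)}$ at $\gamma(\infty)$ must have mass $1/(\gamma^{21})^2$ for every $\gamma\ne 1$. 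By Lemma~\ref{l230ct} this forces $m^{(\infty)}=m_+$; then the symmetry of $m$, together with the fact (Lemma~\ref{18dec03}) that $\tau(z)=1/\overline z$ normalizes $\G$ and carries the orbit of $\infty$ onto that of $0$, forces $m^{(0)}$ to be the $0$--orbit counterpart of $m_+$, namely the function $m_-$ determined by $\Im m_-(z)=\Im m_+(1/\overline z)$. Thus $m=m_++m_-$, which, inserting \eqref{10oct02} and $m_-(z)=-\sum_{\gamma}\bigl(\frac{1}{\gamma(z)}-\Re\frac{1}{\gamma(i)}\bigr)$, is exactly \eqref{26sep04}; in particular $m_+$ is well defined, so \eqref{12oct06} holds, i.e. $\sum_{\gamma}\frac{1}{(\gamma^{11})^2+(\gamma^{21})^2}<\infty$, and reindexing $\gamma\mapsto\gamma^{-1}$ (equivalently applying Lemma~\ref{18dec03}) turns this into $\sum_{\gamma}\frac{1}{(\gamma^{21})^2+(\gamma^{22})^2}<\infty$, which, since $\Im\gamma(i)=|\gamma'(i)|=\frac{1}{(\gamma^{21})^2+(\gamma^{22})^2}$, is \eqref{23oct1}.

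For the converse, assume \eqref{23oct1}; then \eqref{12oct06} and \eqref{12oct05} hold, so $m_+$ in the form \eqref{10oct02} and $m_-$ converge absolutely and locally uniformly off the orbits of $\infty$ and $0$. Put $m:=m_++m_-$. From \eqref{10oct02} and the corresponding formula for $m_-$, $m$ is additively character automorphic, $\Im m>0$ on $\bbC_+$, and $\Im m$ vanishes on $\bbR$ off the orbits of $\infty$ and $0$ (each element of $SL_2(\bbR)$ being real on $\bbR$ away from its single pole); hence $\Im m$ descends to a positive harmonic function on $\O$ whose only singularity is at $\infty$ and which, by regularity of $E$, is continuous up to $E$ and vanishes there. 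With the symmetry $\Im m_-(z)=\Im m_+(1/\overline z)$ this identifies $m$ with the symmetric complex Martin function, so \eqref{26sep04} holds; and since this $m$ has $a=1>0$ and pure point measure, (b) follows. Finally \eqref{26sep05} and \eqref{26sep06} follow by differentiating \eqref{26sep04} term by term, legitimate by the local uniform convergence, using $\gamma'(z)=\frac{1}{(\gamma^{21}z+\gamma^{22})^2}$ and (since $\det\gamma=1$) $\bigl(\frac{1}{\gamma(z)}\bigr)'=-\frac{\gamma'(z)}{\gamma^2(z)}=-\frac{1}{(\gamma^{11}z+\gamma^{12})^2}$, and then taking imaginary parts together with $\Im\gamma(z)=\frac{\Im z}{|\gamma^{21}z+\gamma^{22}|^2}$ and $\Im\bigl(-\frac{1}{\gamma(z)}\bigr)=\frac{\Im\gamma(z)}{|\gamma(z)|^2}=\frac{\Im z}{|\gamma^{11}z+\gamma^{12}|^2}$.

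I expect the main obstacle to be converting automorphy of the whole function $m$ into the precise atomic masses: one must be sure the coefficient-of-$z$ comparison genuinely isolates the single atom at $\gamma(\infty)$ — this uses disjointness of the two orbits and the triviality of the stabilizers of $\infty$ and $0$ in $\G$ — so that Lemma~\ref{l230ct} applies; and, on the converse side, the delicate point is that $\Im(m_++m_-)$ extends continuously to $E$ and vanishes there, which is precisely where regularity of the domain is used.
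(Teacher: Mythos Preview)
Your proposal is correct and follows essentially the same route as the paper. Both arguments hinge on Lemma~\ref{l230ct} to pin down the atomic masses on the orbit of~$\infty$, on Lemma~\ref{18dec03} (the symmetry $\gamma\mapsto\widetilde\gamma$) to pass between the orbits of $\infty$ and $0$, and on the observation that the reindexings $\gamma\mapsto\gamma^{-1}$, $\gamma\mapsto\widetilde\gamma$, $\gamma\mapsto\widetilde\gamma^{-1}$ make the four convergence conditions $\sum 1/((\gamma^{11})^2+(\gamma^{21})^2)$, $\sum 1/((\gamma^{12})^2+(\gamma^{22})^2)$, $\sum 1/((\gamma^{22})^2+(\gamma^{21})^2)$, $\sum 1/((\gamma^{11})^2+(\gamma^{12})^2)$ equivalent, so that \eqref{23oct1} is the same as \eqref{12oct06}. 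Your version is more explicit than the paper's: you spell out why automorphy of the full $m$ still forces the masses $1/(\gamma^{21})^2$ on the $\infty$-orbit (disjointness of the two orbits keeps $m^{(0)}(\gamma(z))$ bounded near $z=\infty$), and in the converse direction you verify directly that $m_++m_-$ satisfies the defining properties of the symmetric Martin function, whereas the paper simply cites the representations \eqref{10oct02} and \eqref{12oct04} and the dichotomy already stated before Proposition~\ref{27dec05}.
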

\begin{proof}\label{26sep15}
We consider
\begin{equation*}
\gamma=\begin{bmatrix}\gamma^{11} & \gamma^{12}\\ \gamma^{21} & \gamma^{22}\end{bmatrix}
,\quad
\widetilde\gamma=\begin{bmatrix} \gamma^{22} & \gamma^{21}\\ \gamma^{12} &  \gamma^{11}\end{bmatrix},
\end{equation*}
\begin{equation*}
\gamma^{-1}=\begin{bmatrix}\gamma^{22} & -\gamma^{12}\\ -\gamma^{21} & \gamma^{11}\end{bmatrix}
,\quad
\widetilde\gamma^{-1}=\begin{bmatrix}\ \ \gamma^{11} & -\gamma^{21}\\ -\gamma^{12} & \ \ \gamma^{22}\end{bmatrix}.
\end{equation*}
The meaning of $\widetilde\gamma$ is explained in \eqref{26sep13} - \eqref{26sep08}.
By looking at the first columns of those matrices, we can conclude that
one of the four convergence conditions below imply the others
\begin{equation}\label{26sep09}
\sum\limits_{\gamma\in\Gamma}
\frac{1}{(\gamma^{11})^2+(\gamma^{21})^2}<\infty
,\quad
\sum\limits_{\gamma\in\Gamma}
\frac{1}{(\gamma^{12})^2+(\gamma^{22})^2}
<\infty,
\end{equation}
\begin{equation}\label{26sep10}
\sum\limits_{\gamma\in\Gamma}
\frac{1}{(\gamma^{22})^2+(\gamma^{21})^2}<\infty
,\quad
\sum\limits_{\gamma\in\Gamma}
\frac{1}{(\gamma^{11})^2+(\gamma^{12})^2}
<\infty.
\end{equation}
In particular, \eqref{12oct05} and \eqref{12oct06} are equivalent. \eqref{23oct1} corresponds to the first condition in \eqref{26sep10}. Due to Lemma \ref{l230ct} we get representation \eqref{10oct02} for $m_+(z)$.
By \eqref{12oct05} we have \eqref{12oct04} for $m_-(z)$, and therefore
 \eqref{26sep04} - \eqref{26sep06}. Also in this formulas
$\gamma$ can be replaced with $\gamma^{-1}$, $\widetilde\gamma$ or $\widetilde\gamma^{-1}$ if needed.
\end{proof}

\if{
\begin{corollary}\label{cal}
Condition ($b_1$) of \eqref{27dec04} implies  \eqref{alco}.
\end{corollary}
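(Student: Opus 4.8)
The plan is to reduce the statement to the summability condition \eqref{23oct1} furnished by Proposition \ref{pr23oct}, and then to read off \eqref{alco} by evaluating the explicit series for the symmetric complex Martin function $m$ of $\G$ along the imaginary axis. First I would dispose of the hypothesis: assuming ($b_1$) of \eqref{27dec04}, Proposition \ref{27dec05} gives property ($b$), and Proposition \ref{pr23oct} then supplies $\sum_{\g\in\G}|\g'(i)|=\sum_{\g\in\G}\Im\g(i)<\infty$, which is \eqref{23oct1}. Consequently I may work under the standing hypothesis of Proposition \ref{pr23oct}, so that $m(z)$ is given by the absolutely convergent series \eqref{26sep04}, the quotient $\Im m(z)/\Im z$ by \eqref{26sep06}, and all four mutually equivalent majorant series \eqref{26sep09}--\eqref{26sep10} are finite.

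The heart of the argument is a passage to the limit in \eqref{26sep06} at $z=iy$ as $y\to+\infty$. For $y\ge 1$ the two families of summands satisfy $|\g^{21}iy+\g^{22}|^{-2}\le((\g^{21})^2+(\g^{22})^2)^{-1}$ and $|\g^{11}iy+\g^{12}|^{-2}\le((\g^{11})^2+(\g^{12})^2)^{-1}$, and the corresponding majorant series are finite by \eqref{26sep10}; hence the Dominated Convergence Theorem licenses termwise passage to the limit. In the first sum the $\g$-term tends to $0$ unless $\g^{21}=0$, which occurs only for $\g=1_\G$ (since $\infty$ is a fixed point of no nontrivial element of $\G$), leaving the single value $1$; in the second sum every term tends to $0$ because $\g^{11}\ne 0$ for all $\g$ (as $\infty$ is never carried to $0$). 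This yields $\lim_{y\to+\infty}\Im m(iy)/y=1$, equivalently $M(\L(iy))/y\to 1$ by \eqref{27dec06}, which is the positive growth asserted by \eqref{alco}.

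The step I expect to be genuinely delicate is the justification of the termwise limit together with the identification of the surviving term, rather than the two reductions, which are immediate from the quoted propositions. If \eqref{alco} is instead formulated for the domain $\bbC\setminus E$ (the point $i\eta$ there, as opposed to $iy$ in the cover, is precisely the distinction stressed after \eqref{27dec04}), then an additional transfer through the uniformization $\L$ is required; I would carry this out via the Schwarz--Christoffel representation \eqref{11sep7}, where each factor $\frac{\xi-\mu_k}{\sqrt{(\xi-a_k)(\xi-b_k)}}$ tends to $1$ as $\xi\to\infty$, and where the finiteness of \eqref{23oct1} is exactly what keeps the limiting normalization constant away from zero, giving strict positivity in \eqref{alco}. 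This transfer, not the algebra of the series, is where the real obstacle lies.
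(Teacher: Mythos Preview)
Your main argument is circular. From the surrounding text (and from the proof of Theorem~\ref{ths}) it is clear that \eqref{alco} is the Akhiezer--Levin condition~(B), namely $\lim_{\eta\to+\infty}M(i\eta)/\eta>0$, where $i\eta$ lies in the upper half plane of the \emph{domain} $\Omega=\bbC\setminus E$. What you compute in your second paragraph is $\lim_{y\to+\infty}\Im m(iy)/y$, where $iy$ lies in the \emph{universal cover}; this limit is precisely the coefficient $a$ in the Riesz--Herglotz representation of $m$, and its positivity is condition~($b_1$) itself. So your careful termwise passage to the limit just recovers the hypothesis (with the normalized value $1$ coming from the explicit construction of $m_+$ in \eqref{25aug01}), not the conclusion. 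Your final paragraph acknowledges this possibility, but the proposed transfer via the Schwarz--Christoffel formula \eqref{11sep7} is not a proof: the assertion that ``\eqref{23oct1} is exactly what keeps the limiting normalization constant away from zero'' has no justification, and controlling the infinite product $\prod_k\frac{\xi-\mu_k}{\sqrt{(\xi-a_k)(\xi-b_k)}}$ uniformly as $\xi\to\infty$ is a genuinely hard analytic question that is not addressed.

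The paper's argument is entirely different and avoids any such transfer. Under~($b_1$) one has two \emph{distinct} Martin functions $M_\pm$ in $\Omega$, related by $M_-(\l)=M_+(\bar\l)$, with $M_\pm(\L(z))=\Im m_\pm(z)$. Restrict them to $\bbC_+\subset\Omega$ and write their Poisson representations there: the boundary data on $\bbR$ agree (both vanish on $E$; on the gaps they coincide because $\bar\l=\l$ there), so $M_+-M_-=(a_+-a_-)\Im\l$. Since $M_+\ne M_-$, the coefficients $a_\pm\ge 0$ are unequal, hence $a_++a_->0$. But $a_++a_-$ is exactly the coefficient of $\Im\l$ for $M=M_++M_-$, i.e.\ the limit in~(B). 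The key idea you are missing is this two-dimensionality of the Martin cone combined with the reflection symmetry of the domain.
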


\begin{proof}
In $\O=\mathbb C\setminus E$ we have two different  Martin functions $M_\pm$ such that
$$
M_\pm(\L(z))=\Im m_\pm(z).
$$
It is known that if the cone of Martin functions in the domain is two dimensional, then \eqref{alco} holds. Indeed,
consider functions $M_\pm(\l)$ on the upper half plane in $\mathbb C\setminus E$. As positive harmonic functions continuous up to the real line, they admit Poisson representations in terms of their values on the real line.
On $E$ they both vanish, on $\mathbb R\setminus E$ they coincide, since $M_-(\l)=M_+(\overline\l)$. Therefore, coefficients $a_\pm$ of $\Im\lambda$ in their Poisson representations are not equal. Hence at least one of them is positive. Finally we conclude that coefficient $a_++a_-$ of
$\Im\lambda$ in the Poisson representation of the symmetric Martin function $M(\lambda)=M_+(\lambda)+M_-(\lambda)$ is positive, that is \eqref{alco} holds.
\end{proof}
}\fi

\begin{lemma}\label{L:29aug01}
Assume that convergence condition \eqref{12oct05} (or any equivalent) holds. Let $m$ be the symmetric Martin function of the group $\Gamma$ defined as in \eqref{26sep04}. Let $m_n(z)$ be the symmetric Martin function of the group $\Gamma_n$. All critical points of $m(z)$ and $m_n(z)$ are located on the boundary semi-circles of $\mathcal F$ and $\mathcal F_n$, respectively. Let $c_{k}^{(n)}$ be the zero of $m'_n(z)$ on the $k$-th semicircle and let $c_{k}$ be the zero of $m'(z)$ on the $k$-th semicircle. Then, as $n$ goes to $\infty$, $m_n(z)$ converges to $m(z)$ uniformly on the compact subsets in $\mathbb C_+$ and
$m'_n(z)$ converges to $m'(z)$ uniformly on the compact subsets in $\mathbb C_+$.  Also
$$
c_{k}^{(n)}\to c_{k}
$$
for every $k$.
Moreover,
$m_n(c_{k}^{(n)})$ converges to $m(c_{k})$ and
$g_n(c_{k}^{(n)}, z_*)$ converges to $g(c_{k}, z_*)$ for every
$k$.
\end{lemma}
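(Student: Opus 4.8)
The plan is to follow, almost verbatim, the scheme used for the Green functions in Lemma \ref{L:28aug01}. The one genuinely new point is that one must first make sure the explicit formula \eqref{26sep04} is available for every approximating group $\Gamma_n$, not only for $\Gamma$. So I would begin by noting that $\Gamma_n$ is of convergent type (being a subgroup of the convergent type group $\Gamma$), that $\Omega_n$ is a regular Denjoy domain enjoying property \eqref{10sep0} — its complement $E_n$ being $\mathbb R$ with finitely many bounded open intervals deleted — and that the convergence condition \eqref{12oct05} for $\Gamma_n$ is simply the restriction to $\Gamma_n$ of the convergent series for $\Gamma$, hence it holds. Consequently Proposition \ref{prop17} (for $\Omega_n$) and Proposition \ref{pr23oct} (for $\Gamma_n$) apply and give: $m_n$ is represented by \eqref{26sep04} and $m_n'$ by \eqref{26sep05} with $\Gamma$ replaced by $\Gamma_n$; all critical points of $m_n$ are real, exactly one in each of the finitely many gaps of $\Omega_n$; equivalently, on the universal cover $m_n$ has exactly one critical point $c_k^{(n)}$ on each boundary semicircle of $\cF_n$. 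It is worth recording that a gap of $\Omega_n$ is literally one of the retained original gaps $(a_k,b_k)$, so that $\Lambda_n(c_k^{(n)})\in(a_k,b_k)$ with endpoints that do not depend on $n$.

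Next I would establish the uniform convergence. Fix a compact $K\subset\mathbb C_+$. The orbits $\{\gamma(0)\}$ and $\{\gamma(\infty)\}$ lie on $\mathbb R$, hence at positive distance from $K$, so both series \eqref{26sep04} and \eqref{26sep05} converge absolutely and uniformly on $K$ for the whole group $\Gamma$. Since $\Gamma_n\nearrow\Gamma$, the difference $m-m_n$ equals the tail $\sum_{\gamma\in\Gamma\setminus\Gamma_n}$ of the series \eqref{26sep04}, whose modulus on $K$ is dominated by the corresponding tail of a convergent series and therefore tends to $0$ uniformly on $K$; the same estimate applied to \eqref{26sep05} gives $m_n'\to m'$ uniformly on $K$ (alternatively, $m_n'\to m'$ locally uniformly follows from $m_n\to m$ by the Cauchy integral formula). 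This gives the first two assertions of the lemma.

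The step requiring the most care — the one I would flag as the main obstacle — is the convergence $c_k^{(n)}\to c_k$, and it goes exactly as in Lemma \ref{L:28aug01}. Since $c_k$ is an interior point of the $k$-th semicircle it is an isolated zero of $m'$ in $\mathbb C_+$, so one may fix a small disk $B(c_k,\varepsilon)\subset\mathbb C_+$ on whose boundary $m'$ does not vanish and which meets no boundary semicircle other than the $k$-th. For $n$ large, the uniform convergence $m_n'\to m'$ of Step~2 keeps $m_n'$ zero-free on $\partial B(c_k,\varepsilon)$, and Rouch\'e's theorem then produces exactly one zero of $m_n'$ in $B(c_k,\varepsilon)$. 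By Proposition \ref{prop17} applied to $\Omega_n$ every zero of $m_n'$ lies on a boundary semicircle of $\cF_n$, precisely one per semicircle; since $B(c_k,\varepsilon)$ sees only the $k$-th semicircle, the zero just produced must be $c_k^{(n)}$. Hence $c_k^{(n)}\in B(c_k,\varepsilon)$ for all large $n$, i.e. $c_k^{(n)}\to c_k$. As in Lemma \ref{L:28aug01}, regularity of $E$ (and of the $E_n$) is indispensable here: it is what makes Proposition \ref{prop17} applicable, and it is exactly the resulting fact — that every critical point sits on a semicircle, rather than, say, drifting to an endpoint of the $k$-th semicircle — that forces the convergence.

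It remains to treat the convergence of the values, which is now routine. For fixed $k$ and all large $n$, $c_k^{(n)}$ belongs to a fixed closed ball $B_0\subset\mathbb C_+$ about $c_k$, so
$$
|m_n(c_k^{(n)})-m(c_k)|\le\sup_{B_0}|m_n-m|+|m(c_k^{(n)})-m(c_k)|\longrightarrow 0,\qquad n\to\infty,
$$
by Step~2, the convergence $c_k^{(n)}\to c_k$, and continuity of $m$; substituting $g(\cdot,z_*)$ and $g_n(\cdot,z_*)$ for $m$ and $m_n$, and using the uniform convergence $g_n\to g$ from Lemma \ref{L:28aug01}, gives $g_n(c_k^{(n)},z_*)\to g(c_k,z_*)$. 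All of this applies equally to $k=0$, since the $0$-th semicircle is retained in every $\cF_n$.
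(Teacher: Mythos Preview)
Your proof is correct and is exactly the approach the paper intends: its own proof is the single sentence ``Completely parallel to the proof of Lemma \ref{L:28aug01},'' and you have faithfully written out those parallel details (tail estimate from the absolutely convergent series \eqref{26sep04}--\eqref{26sep05}, Rouch\'e for the critical points, then the triangle-inequality step for the values). One small inaccuracy you should remove: the aside ``a gap of $\Omega_n$ is literally one of the retained original gaps $(a_k,b_k)$'' is not justified, since $\Lambda_n$ is a different uniformizing map from $\Lambda$ and there is no reason $\Lambda_n$ sends the $k$-th semicircle to $(a_k,b_k)$; fortunately you never use this, as the whole argument (location of $c_k^{(n)}$ on the $k$-th semicircle, Rouch\'e, etc.) takes place in $\mathbb C_+$.
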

\begin{proof}
Completely parallel to the proof of Lemma \ref{L:28aug01}.
\end{proof}

\subsection{Akhiezer - Levin
condition}\label{Akhiezer-Levin}

In this subsection we prove

\begin{theorem}\label{ths}
Properties $(B)$ of Theorem \ref{mth-15oct01} and $(b_1)$ of \eqref{27dec04} are equivalent.
\end{theorem}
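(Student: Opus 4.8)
The plan is to express both conditions through the coefficient of $\Im\lambda$ in the Herglotz representation of the symmetric Martin function, read off in the two coordinates. By Propositions~\ref{27dec05} and \ref{pr23oct}, $(b_1)$ is equivalent to $(b)$, hence to the summability~\eqref{23oct1}, and in that case $m=m_++m_-$ is given by the explicit formula~\eqref{26sep04}. On the other side, the restriction of $M$ to the upper half-plane $\bbC_+\subset\O$ is a nonnegative harmonic function, continuous up to $\bbR$ and vanishing on $E$, so by the Herglotz theorem
\[
M(\lambda)=c_M\,\Im\lambda+\frac1\pi\int_{\bbR}\frac{\Im\lambda}{|t-\lambda|^2}\,M(t)\,dt,\qquad c_M\ge 0 ,
\]
and $M(i\eta)/\eta=c_M+\frac1\pi\int_{\bbR}\frac{M(t)\,dt}{t^2+\eta^2}\to c_M$ as $\eta\to+\infty$ (dominated convergence, using $\int\frac{M(t)}{1+t^2}\,dt<\infty$). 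Hence $(B)$ says exactly $c_M>0$, and the theorem reduces to the equivalence $c_M>0\iff\eqref{23oct1}$.

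For $(b_1)\Rightarrow(B)$ I would assume \eqref{23oct1}. Then by \eqref{2sepp}, \eqref{12oct02} and Lemma~\ref{l230ct} the functions $\Im m_\pm$ are positive and $\G$-invariant on the cover, so they descend to positive harmonic functions $M_\pm$ on $\O$, with $M=M_++M_-$ by \eqref{26sep04}. The antiholomorphic involution $\lambda\mapsto\overline\lambda$ lifts to $z\mapsto 1/\overline z$ and $\Im m_-(z)=\Im m_+(1/\overline z)$, so $M_-(\lambda)=M_+(\overline\lambda)$; consequently $M_+$ and $M_-$ are nonnegative harmonic on $\bbC_+$ with the same finite continuous boundary values on all of $\bbR$ (zero on $E$, the common gap function on $\bbR\setminus E$), hence they differ by a constant multiple of $\Im\lambda$, say $M_+(\lambda)-M_-(\lambda)=\kappa\,\Im\lambda$. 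Here $\kappa\ne 0$, since $\Im m_+\not\equiv\Im m_-$: by \eqref{26sep05}, $\Im m_+(iy)/y\to 1$ while $\Im m_-(iy)/y\to 0$ as $y\to+\infty$. Exchanging $M_+$ and $M_-$ so that $\kappa>0$, we obtain $M=2M_-+\kappa\,\Im\lambda\ge\kappa\,\Im\lambda$ on $\bbC_+$, hence $c_M\ge\kappa>0$.

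For $(B)\Rightarrow(b_1)$ I would argue by contraposition. Suppose $(b)$ fails; then by Subsection~\ref{DM} (see also \cite{VYM}) the Riesz--Herglotz measure of $m$ is continuous singular, $a=0$, and the boundary point $\infty$ of $\O$ does not split, i.e. the cone of nonnegative harmonic functions on $\O$ that are singular only at $\infty$ and vanish on $E$ is one-dimensional, spanned by $M$. If nonetheless $(B)$ held, i.e. $c_M>0$, then $\tfrac12\bigl(M(\lambda)\pm c_M\,\Im\lambda\bigr)$ would be two linearly independent elements of that cone: each is harmonic on all of $\O$, vanishes on $E\subset\bbR$, is singular only at $\infty$, and is nonnegative because $M(\lambda)\ge c_M|\Im\lambda|$ on $\O$ (the Herglotz bound on $\bbC_+$ together with the symmetry $M(\overline\lambda)=M(\lambda)$). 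This contradiction forces $c_M=0$, i.e. $(B)$ fails.

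The step I expect to be the main obstacle is the non-splitting assertion used in the last paragraph: that when \eqref{23oct1} fails, the cone of Martin functions of $\O$ at $\infty$ reduces to a single ray. I would deduce it from the pure-point versus continuous-singular dichotomy of Subsection~\ref{DM}; if a direct argument proves awkward, one can instead approximate $\G$ by the finitely generated subgroups $\G_n$ of Section~\ref{FZ}, for which $\cF_n$ agrees with $\bbC_+$ near $\infty$, $\L_n$ is conformal there, $(B)$ holds automatically for $\O_n$, and then control the limiting behaviour of the corresponding Herglotz coefficients (via Lemma~\ref{L:29aug01}).
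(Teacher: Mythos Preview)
Your argument for $(b_1)\Rightarrow(B)$ is essentially the paper's: both use the two Martin functions $M_\pm$ with $M_-(\lambda)=M_+(\overline\lambda)$, observe that on $\bbC_+$ they share the same boundary data on $\bbR$ and hence differ by $\kappa\,\Im\lambda$ with $\kappa\ne 0$, and conclude $c_M>0$. (Minor point: for $\Im m_+(iy)/y\to 1$ and $\Im m_-(iy)/y\to 0$ you should cite \eqref{2sepp} and \eqref{12oct02}, not \eqref{26sep05}.)

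The gap is in $(B)\Rightarrow(b_1)$. Your contrapositive hinges on the claim that if \eqref{23oct1} fails then the Martin cone of $\O$ at $\infty$ is one-dimensional. Nothing in Subsection~\ref{DM} establishes this: the dichotomy there concerns the Riesz--Herglotz measure of $m$ on the cover, not the dimension of the Martin cone of $\O$. To close the circle you would need to show that the existence of a non-symmetric minimal Martin function $N_+$ on $\O$ forces its lift $N_+(\L(z))$ to have a point mass at $\infty$ (equivalently, linear growth along $iy$); this is exactly the implication you are trying to prove, so invoking the dichotomy is circular. Your fallback via $\G_n$ also stalls: Lemma~\ref{L:29aug01} presupposes \eqref{23oct1} for $\G$, so it cannot be invoked before $(b_1)$ is known, and without it there is no control on the limit of the Herglotz coefficients.

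The paper bypasses the Martin-cone dimension entirely for this direction and argues through the Carath\'eodory--Julia theorem (their Corollary~\ref{27nov6}). The point is that the ``Julia quotient''
\[
\frac{\Im u(z)}{\Im z}\cdot\frac{|z+i|^2}{|u(z)+i|^2}
\]
behaves well under composition of Herglotz maps. Condition $(B)$ says the Julia quotient of $\t$ at $\infty$ is finite and positive; since $\L^{-1}$ (restricted to $\bbC_+\subset\O$) is a nonconstant Herglotz map, its Julia quotient at $\infty$ is automatically positive. Dividing the two and substituting $\l=\L(z)$ gives a sequence $z_k\to\infty$ along which the Julia quotient of $m=\t\circ\L$ is finite; Corollary~\ref{27nov6} then upgrades this to the radial statement $\lim_{y\to\infty}\Im m(iy)/y>0$, which is $(b_1)$. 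This route is short, uses only elementary angular-derivative theory, and avoids any appeal to the structure of the Martin boundary.
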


We start with the following lemma.

\begin{lemma}\label{23nov01}
Let $u(z)$ be a function with positive imaginary part on the upper half plane. We assume that $u$ is not a real constant.
Let
$$
f(z)=\frac{u(z)-i}{u(z)+i},\quad \text{respectively},\quad u(z)=i\frac{1+f(z)}{1-f(z)}.
$$
Let
$$
\zeta=\frac{z-i}{z+i},\quad \text{respectively},\quad z=i\frac{1+\zeta}{1-\zeta}.
$$
Let also
\be\label{27nov5}
 w(\zeta)=f\left(i\frac{1+\zeta}{1-\zeta}\right).
\ee
Then
$$
a :=\lim_{z=iy, y\to\infty}\frac{\Im u(z)}{\Im z} > 0
$$
if and only if
\be\nonumber 
\lim_{\zeta >0, \zeta\to 1}  w(\zeta)=1\quad\text{and}\quad
d :=\lim_{\zeta >0, \zeta\to 1}\frac{1-| w(\zeta)|^2}{1-|\zeta|^2}<\infty.
\ee
In this case
$$
a=\frac{1}{d}.
$$
\end{lemma}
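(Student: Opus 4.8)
My plan is to transport the whole statement to the unit disk through the two given Cayley transforms and to reduce it to one elementary identity plus Julia's lemma, the rest being routine bookkeeping with the Herglotz representation of $u$.

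\emph{The key identity.} Restrict to the imaginary axis: write $z=iy$, $y>1$, and $\zeta=\frac{y-1}{y+1}\in(0,1)$, so that $z=i\frac{1+\zeta}{1-\zeta}$, $|\zeta|=\zeta$, and $\zeta\to1^-$ precisely when $y\to+\infty$. Since $f=\dfrac{u-i}{u+i}=1-\dfrac{2i}{u+i}$ and $w(\zeta)=f(iy)$, a direct computation gives $1-|w(\zeta)|^2=\dfrac{4\,\Im u(iy)}{|u(iy)+i|^2}$, $|1-w(\zeta)|^2=\dfrac{4}{|u(iy)+i|^2}$, and $1-\zeta^2=\dfrac{4y}{(y+1)^2}$, whence
\begin{equation*}
\Im u(iy)=\frac{1-|w(\zeta)|^2}{|1-w(\zeta)|^2},\qquad
\frac{1-|w(\zeta)|^2}{1-|\zeta|^2}=\frac{\Im u(iy)}{y}\cdot\frac{(y+1)^2}{|u(iy)+i|^2}.
\end{equation*}
I would also record two facts from the Herglotz representation $u(z)=az+b+\int_{\mathbb R}\big(\frac1{x-z}-\frac{x}{1+x^2}\big)\,d\mu(x)$, $a\ge0$, $\int\frac{d\mu(x)}{1+x^2}<\infty$. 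First, $\frac{\Im u(iy)}{y}=a+\int\frac{d\mu(x)}{x^2+y^2}\searrow a$ as $y\to+\infty$ by monotone convergence; hence $a=\lim_{y\to\infty}\Im u(iy)/y$ always exists in $[0,\infty)$ and $\Im u(iy)\ge ay$ for all $y$. Second, $\Re u(iy)=o(y)$: indeed $\frac{\Re u(iy)}{y}=\frac by+\int\frac{x(1-y^2)}{y(x^2+y^2)(1+x^2)}\,d\mu(x)$, and for $y\ge1$ the last integrand is dominated by $\frac{1}{2(1+x^2)}$ (use $y^2-1\le y^2$ and $x^2+y^2\ge2|x|y$), so dominated convergence applies.

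\emph{The direction $a>0\Rightarrow\{w(\zeta)\to1,\ d<\infty\}$.} From $\Im u(iy)\ge ay\to+\infty$ we get $|u(iy)|\to\infty$, so $w(\zeta)=1-\frac{2i}{u(iy)+i}\to1$ as $\zeta\to1^-$. From the two recorded facts, $|u(iy)+i|^2=(\Re u(iy))^2+(\Im u(iy)+1)^2=a^2y^2\big(1+o(1)\big)$, so the second factor in the key identity tends to $1/a^2$, and therefore
\begin{equation*}
d:=\lim_{\zeta\to1^-}\frac{1-|w(\zeta)|^2}{1-|\zeta|^2}=a\cdot\frac{1}{a^2}=\frac1a\in(0,\infty),
\end{equation*}
which is exactly the asserted conclusion, including $a=1/d$.

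\emph{The converse.} Assume $w(\zeta)\to1$ and $d:=\lim_{\zeta\to1^-}\frac{1-|w(\zeta)|^2}{1-|\zeta|^2}<\infty$. Since $|w(\zeta)|,|\zeta|\to1$ we also get $\frac{1-|w(\zeta)|}{1-|\zeta|}\to d$, so $\liminf_{\zeta\to1}\frac{1-|w(\zeta)|}{1-|\zeta|}\le d<\infty$, and Julia's lemma applies at the boundary point $1$; its nontangential limit of $w$ there coincides with the radial one, namely $1$, so Julia's inequality reads
\begin{equation*}
\frac{|1-w(\zeta)|^2}{1-|w(\zeta)|^2}\le d\,\frac{|1-\zeta|^2}{1-|\zeta|^2},\qquad \zeta\in\mathbb D.
\end{equation*}
Taking $\zeta=r\in(0,1)$ and using the first identity of the key lemma with $y=\frac{1+r}{1-r}$, the left-hand side equals $1/\Im u(iy)$ and the right-hand side equals $d/y$; hence $\Im u(iy)\ge y/d$ and $a=\lim\Im u(iy)/y\ge1/d>0$. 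Since now $a>0$, the computation of the previous paragraph applies verbatim and gives $\lim_{\zeta\to1^-}\frac{1-|w(\zeta)|^2}{1-|\zeta|^2}=1/a$; comparing with the hypothesis yields $d=1/a$, i.e.\ $a=1/d$.

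The step I expect to be the real obstacle is the converse: the elementary identity alone cannot rule out $a=0$ — e.g.\ $u(z)=\log z$ has $w(\zeta)\to1$ yet $a=0$ and $d=\infty$, while $u(z)=-1/z$ has $d<\infty$ but $w(\zeta)\to-1$ and $a=0$ — so one genuinely needs the one-sided Schwarz--Pick/horocycle estimate of Julia's lemma to convert finiteness of $d$ together with the normalization $w(\zeta)\to1$ into the quantitative bound $\Im u(iy)\ge y/d$. Everything else is bookkeeping with the two Cayley transforms and the Herglotz representation; note that it is precisely the factor $|1-w(\zeta)|^{-2}$ in the identity (morally the squared angular derivative at $1$) that makes the answer $a=1/d$ rather than $a=d$.
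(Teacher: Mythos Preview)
Your proof is correct and follows essentially the same approach as the paper: the Cayley-transform identity relating $\Im u/\Im z$ to $(1-|w|^2)/(1-|\zeta|^2)$, plus Julia's inequality for the converse direction. The one minor difference is that you establish $d=1/a$ directly in the forward direction by invoking the Herglotz representation to show $\Re u(iy)=o(y)$, whereas the paper only proves the inequality $d\le 1/a$ there and then combines it with the converse bound $a\ge 1/d$ to obtain equality.
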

\begin{proof}
Note that $\zeta\to 1$ as $z\to\infty$.
Compute
$$
\frac{\Im u(z)}{\Im z}=\frac{1-|f(z)|^2}{|1-f(z)|^2}\frac{|1-\zeta|^2}{1-|\zeta|^2}
=\frac{1-|w(\zeta)|^2}{1-|\zeta|^2}\left|\frac{1-\zeta}{1-w(\zeta)}\right|^2.
$$
The backwards computation gives
\be\label{27nov7}
\frac{1-| w(\zeta)|^2}{1-|\zeta|^2}=
\frac{\Im u(z)}{\Im z}\frac{|z+i|^2}{|u(z)+i|^2}.
\ee
The limit
$$
a=\lim_{z=iy, y\to\infty}\frac{\Im u(z)}{\Im z} \ge 0
$$
always exists and is finite ($a$ is the coefficient of $z$ in the Riesz-Herglotz representation of $u$). Assume that $a>0$. Then
$$
\lim_{z=iy, y\to\infty}u(z)=\infty\
\text{
and, therefore,}\
\lim_{\zeta >0, \zeta\to 1}  w(\zeta)=1.
$$
Further,
\begin{align*}
d=\lim_{\zeta >0, \zeta\to 1}\frac{1-| w(\zeta)|^2}{1-|\zeta|^2}=&
\lim_{z=iy, y\to\infty}\frac{\Im u(z)}{\Im z}
\frac{|z+i|^2}{|u(z)+i|^2}\\
=&
 \lim_{z=iy, y\to\infty}\frac{\Im u(z)}{\Im z}\frac{(\Im z+1)^2}{(\Im u(z)+1)^2+(\Re u(z))^2}
\\
\le &
 \lim_{z=iy, y\to\infty}\frac{\Im u(z)}{\Im z}\frac{(\Im z+1)^2 }{(\Im u(z)+1)^2}=\frac{1}{a}<\infty.
\end{align*}
Conversely, let
$$
\lim_{\zeta >0, \zeta\to 1} w(\zeta)=1\quad\text{and}\quad
\lim_{\zeta >0, \zeta\to 1}\frac{1-|w(\zeta)|^2}{1-|\zeta|^2}=d<\infty.
$$
$d>0$, since $w$ is not a unimodular constant (since $u$ is not a real constant). Then, by the Julia Theorem
(see Theorem \ref{03sep01} in Appendix),
$$
\frac{1-|w(\zeta)|^2}{1-|\zeta|^2}\left|\frac{1-\zeta}{1-w(\zeta)}\right|^2\ge \frac{1}{d}>0.
$$
Therefore,
$$
\frac{\Im u(z)}{\Im z}
=\frac{1-|w(\zeta)|^2}{1-|\zeta|^2}\left|\frac{1-\zeta}{1-w(\zeta)}\right|^2\ge\frac{1}{d}>0
$$
and
$$
a=\lim_{z=iy, y\to\infty}\frac{\Im u(z)}{\Im z}\ge\frac{1}{d}>0.
$$
\end{proof}
Combining Lemma \ref{23nov01} and the Carath\' eodory- Julia theorem (Theorem \ref{03sep01} in Appendix) we get
\begin{corollary}\label{27nov6}
Assume that $u(z)$ is not a real constant, then
the following are equivalent
\begin{eqnarray*}
&&(1)\quad \text{There\ exists\ a\ sequence\ } z_k,\ \Im z_k>0,\ \lim z_k=\infty
\text{\ such\ that\ }
\nonumber\\
&&
\quad\quad
{\displaystyle\lim u(z_k)}=\infty\quad\text{and}\quad
{\displaystyle d_1 :=\lim
\frac{\Im u(z_k)}{\Im z_k}\frac{|z_k+i|^2}{|u(z_k)+i|^2}}<\infty;\nonumber \\
&&(2) \quad
{\displaystyle\lim_{z=iy, y\to\infty}u(z)}=\infty\quad\text{and}\quad
{\displaystyle d_2 :=\lim_{z=iy, y\to\infty}
\frac{\Im u(z)}{\Im z}\frac{|z+i|^2}{|u(z)+i|^2}}<\infty
;\nonumber\\
&&(3) \quad a=\lim_{z=iy, y\to\infty}\frac{\Im u(z)}{\Im z} > 0;\nonumber\\
\end{eqnarray*}
When these conditions hold, we have $d_1=d_2=\frac{1}{a}$. Function $u$ is a real constant if and only if
$d_1=d_2=0$.
\end{corollary}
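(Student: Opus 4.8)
The plan is to push the entire statement to the unit disk through the substitutions of Lemma~\ref{23nov01} and then recognize conditions (1)--(3) as, respectively, the hypothesis and the conclusions of the Carath\'eodory--Julia theorem (Theorem~\ref{03sep01}). Recall the dictionary $f(z)=\frac{u(z)-i}{u(z)+i}$, $\zeta=\frac{z-i}{z+i}$, $w(\zeta)=f\bigl(i\frac{1+\zeta}{1-\zeta}\bigr)$. Under it, $z\in\bbC_+$ corresponds to $\zeta\in\bbD$; $z\to\infty$ with $\Im z>0$ corresponds to $\zeta\to 1$ with $|\zeta|<1$; the ray $z=iy$, $y\to+\infty$, corresponds to the radius $\zeta=\frac{y-1}{y+1}\to 1^-$; and $u(z)\to\infty$ is the same as $w(\zeta)\to 1$, because $u=i\frac{1+f}{1-f}$. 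Moreover, formula~\eqref{27nov7} reads
$$
\frac{1-|w(\zeta)|^2}{1-|\zeta|^2}=\frac{\Im u(z)}{\Im z}\,\frac{|z+i|^2}{|u(z)+i|^2},
$$
so the expressions whose limits define $d_1$ and $d_2$ are precisely $\frac{1-|w(\zeta)|^2}{1-|\zeta|^2}$ along the image sequence $\{\zeta_k\}$, respectively along the radius.

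With this dictionary in hand, I would first observe that (2)$\Leftrightarrow$(3) is merely Lemma~\ref{23nov01} restated: (2) says that $w(\zeta)\to 1$ radially and that $d_2=\lim_{\zeta\to 1^-}\frac{1-|w(\zeta)|^2}{1-|\zeta|^2}<\infty$, which by Lemma~\ref{23nov01} is equivalent to $a=\lim_{y\to+\infty}\frac{\Im u(iy)}{y}>0$, with $a=1/d_2$. The implication (2)$\Rightarrow$(1) is then immediate: take $z_k=iy_k$ with $y_k\to+\infty$, so that $d_1=d_2$.

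The only substantive step is (1)$\Rightarrow$(2). Translated to the disk, the hypothesis becomes: there is a sequence $\zeta_k\to 1$ in $\bbD$ with $w(\zeta_k)\to 1$ and $\lim_k\frac{1-|w(\zeta_k)|^2}{1-|\zeta_k|^2}=d_1<\infty$; in particular $\liminf_{\zeta\to 1}\frac{1-|w(\zeta)|}{1-|\zeta|}\le d_1<\infty$, since the extra factor $\frac{1+|w(\zeta_k)|}{1+|\zeta_k|}$ tends to $1$. Now the Carath\'eodory--Julia theorem (Theorem~\ref{03sep01}) applies at the boundary point $1$: $w$ acquires a unimodular nontangential limit there, equal to $1$ because $w(\zeta_k)\to 1$, the angular derivative $w'(1)$ exists, and $\frac{1-|w(\zeta)|^2}{1-|\zeta|^2}$ has a nontangential limit at $1$ equal to $\liminf_{\zeta\to 1}\frac{1-|w(\zeta)|}{1-|\zeta|}$. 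Restricting to the radius gives exactly (2), and since this nontangential limit is a single number realized in particular along $\{\zeta_k\}$ we get $d_2=d_1$.

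Chaining these gives (1)$\Leftrightarrow$(2)$\Leftrightarrow$(3) together with $d_1=d_2=1/a$. For the final sentence: if $u\equiv c\in\bbR$ then $\Im u\equiv 0$ and every such limit is $0$, whereas if $u$ is not a real constant then $w$ is not a unimodular constant and the Julia inequality used in the proof of Lemma~\ref{23nov01} forces $d>0$; hence $u$ is a real constant precisely when $d_1=d_2=0$. The step that needs care is the appeal to Theorem~\ref{03sep01}: one must note that a sequence $z_k\to\infty$ in $\bbC_+$ may reach $\infty$ tangentially, so its image $\zeta_k$ may approach $1$ nontangentially, yet this is still admissible input for the \emph{unrestricted} $\liminf$ driving the Julia--Carath\'eodory theorem; and one must use that the theorem returns the \emph{radial} value of $\frac{1-|w|^2}{1-|\zeta|^2}$ with the same constant, so nothing is lost when we pass back to the ray $z=iy$.
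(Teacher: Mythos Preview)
Your proof is essentially the same as the paper's: both translate (1), (2), (3) to the disk via the Cayley substitutions of Lemma~\ref{23nov01}, identify them with the hypothesis and conclusions of the Carath\'eodory--Julia theorem (Theorem~\ref{03sep01}), and read off the equivalences. The paper's proof is terser but follows the identical route: (2)$\Leftrightarrow$(3) by Lemma~\ref{23nov01}, (2)$\Rightarrow$(1) trivially, and (1')$\Rightarrow$(2') via Theorem~\ref{03sep01}.

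Two small points deserve comment. First, your parenthetical remark that a tangential sequence $z_k\to\infty$ in $\bbC_+$ may have image $\zeta_k\to 1$ \emph{nontangentially} is incorrect: the Cayley map preserves the tangential/nontangential dichotomy (one checks $|1-\zeta|/(1-|\zeta|^2)=|z+i|/(2\Im z)$). This does not damage your argument, since, as you correctly observe, Theorem~\ref{03sep01} is triggered by the \emph{unrestricted} $\liminf$. Second, your one-line justification that the nontangential limit $w(1)$ equals $1$ ``because $w(\zeta_k)\to 1$'' is incomplete when $\zeta_k$ is tangential; here the paper is slightly more explicit, invoking the boundary Schwarz--Pick inequality \eqref{28oct01}: once $d_4<\infty$, that inequality forces $|w(\zeta_k)-w_0|^2\le d_4\cdot\phi(\zeta_k)\cdot|\zeta_k-1|^2\to 0$, so $w(\zeta_k)\to w_0$, and comparison with $w(\zeta_k)\to 1$ gives $w_0=1$. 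With this filled in, your argument and the paper's coincide.
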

\begin{proof}
Note first that, in view of formulas \eqref{27nov5} and \eqref{27nov7}, (1) and (2) are equivalent, respectively, to
\begin{eqnarray*}
&&(1')\quad \text{There\ exists\ a\ sequence\ } \z_k,\ |\z_k|<1,\ \lim\z_k=1
\text{\ such\ that\ }
\nonumber\\
&&
\quad\quad\ \
{\displaystyle\lim w(\z_k)}=1\quad\text{and}\quad
{\displaystyle\lim
\frac{1-| w(\zeta_k)|^2}{1-|\zeta_k|^2}}<\infty;\nonumber \\
&&(2') \
\lim_{\zeta >0, \zeta\to 1} w(\zeta)=1\quad\text{and}\quad
\lim_{\zeta >0, \zeta\to 1}\frac{1-|w(\zeta)|^2}{1-|\zeta|^2}<\infty.
\nonumber\\
\end{eqnarray*}
 By Lemma \ref{23nov01}, (3) is equivalent to (2') and, therefore (3) and (2) are equivalent.
 (2) obviously implies (1). It remains to show that (1) implies (2), equivalently, that (1') implies (2').
  By Theorem \ref{03sep01}, the second part of (1') implies the second part of (2').
Also (1') implies  \eqref{28oct01} of Theorem \ref{03sep01} with $w_0=1$. Hence, $w_0=1$ is the
limit of $w(\z)$ as $\z$, $|\z|<1$, approaches $t_0=1$ nontangentially.
\end{proof}

\begin{proof}[\bf Proof of Theorem \ref{ths}]
First we show that (B) of Theorem \ref{mth-15oct01} implies ($b_1$) of \eqref{27dec04}.
Property (B) of Theorem \ref{mth-15oct01} says that
\be\label{27nov2}
\lim_{\l=i\eta, \eta\to\infty}\frac{\Im\theta(\l)}{\Im \l} > 0.
\ee
By Corollary \ref{27nov6}, we have that
\be\label{18dec04}
\lim_{\l=i\eta, \eta\to\infty}\theta(\l)=\infty.
\ee
and
\be\label{28nov01}
{\displaystyle\lim_{\l=i\eta, \eta\to\infty}\frac{\Im \theta(\l)}{\Im \l}\frac{|\l+i|^2}{|\theta(\l)+i|^2}}<\infty.
\ee
Consider $z=\L^{-1}(\l)$ as a mapping from $\bbC_+\subset\bbC\setminus E$ to the fundamental domain (in $\bbC_+$).
Since $\L^{-1}(\l)$ is a nonconstant function with positive imaginary part, we get, by Corollary \ref{27nov6},
that
\be\label{28nov02}
{\displaystyle\lim_{\l=i\eta, \eta\to\infty}\frac{\Im \L^{-1}(\l)}{\Im \l}\frac{|\l+i|^2}{|\L^{-1}(\l)+i|^2}}>0.
\ee
Combining \eqref{28nov01} and \eqref{28nov02}, we get
\be\label{28nov03}
{\displaystyle\lim_{\l=i\eta, \eta\to\infty}\frac{\Im\theta(\l)}{\Im \L^{-1}(\l)}
\frac{|\L^{-1}(\l)+i|^2}{|\theta(\l)+i|^2}}<\infty.
\ee
Substituting $\l=\L(z)$, we get that
\be\label{28nov04}
{\displaystyle\lim\frac{\Im m(z)}{\Im z}\frac{|z+i|^2}{|m(z)+i|^2}}<\infty
\ee
as $z$ goes to $\infty$ along $\L^{-1}(\{i\eta, \eta>0\})$. We also have from \eqref{18dec04} that
$$
\lim m(z)=\infty
$$
as $z$ goes to $\infty$ along $\L^{-1}(\{i\eta, \eta>0\})$.
By Corollary \ref{27nov6},
$$
\lim_{z=iy, y\to\infty}\frac{\Im m(z)}{\Im z} > 0,
$$
which is ($b_1$) of \eqref{27dec04}.

Now we will show that
condition ($b_1$) of \eqref{27dec04} implies (B) of Theorem \ref{mth-15oct01}.
In
$\O=\mathbb C\setminus E$ we have two different  Martin functions $M_\pm$ such that
$
M_+(\L(z))=\Im m_+(z),
$
where the measure of $m_+$ is supported by the orbit $\{\g(\infty)\}$,  and $M_-(\l)=M_+(\overline\l)$.
It is known that if the cone of Martin functions in the domain is two dimensional, then (B) of Theorem \ref{mth-15oct01} holds. Indeed,
consider functions $M_\pm(\l)$ on the upper half plane in $\mathbb C\setminus E$. As positive harmonic functions continuous up to the real line, they admit Poisson representations in terms of their values on the real line.
On $E$ they both vanish, on $\mathbb R\setminus E$ they coincide. Therefore, coefficients $a_\pm\ge 0$ of $\Im\lambda$ in their Poisson representations are not equal. Hence at least one of them is positive. Finally we conclude that the coefficient $a_++a_-$ of $\Im\lambda$ in the Poisson representation of the symmetric Martin function $M(\lambda)=M_+(\lambda)+M_-(\lambda)$ is positive, that is, (B) holds.
\end{proof}
\begin{remark}
Observe that actually $a_-=0$, for otherwise, by Corollary \ref{27nov6},
$$
\lim_{z=iy, y\to\infty}\frac{\Im m_-(z)}{\Im z} > 0,
$$
which is not the case. Therefore, $a_+>0$. Now we can write
$$
M_+(\l)-M_-(\l)=a_+\Im\l
$$
or, after substituting $\l=\L(z)$,
$$
\Im m_+(z)-\Im m_-(z)=a_+\Im\L(z).
$$
Hence,
$$
\dfrac{1}{\lim\limits_{z=iy, y\to\infty}\dfrac{\Im\L(z)}{\Im z}}=a_+>0.
$$

\end{remark}

\section{Proof of the main theorem (Theorem \ref{mth-15oct01})}\label{pro}

In this section we will use restatement of condition (A) in terms of the universal cover
\begin{equation}\label{17oct03}
(A)\qquad
\prod_{k\ne 0}|g(c_{k}, z_*)|>0,
\end{equation}
where $c_{k}$ are the zeros of $m'(z)$ (one on each boundary semicircle of $\cF$). This is the Blaschke condition on {\it all} the zeros of $m'(z)$ in the upper half plane (orbits of $c_k$ under the action of the group $\G$).

\subsection{Proof of the implication $(ii)\Rightarrow (iii)$}

By Proposition \ref{27dec05},
($b$) is equivalent to ($b_1$) of \eqref{27dec04} and, by Theorem \ref{ths}, ($b_1$) is equivalent to (B) of Theorem \ref{mth-15oct01}. Property ($a$) implies $(A)$, since zeros of a function of bounded characteristic satisfy the Blaschke condition.

\subsection{Proof of the implication $(iii)\Rightarrow (ii)$}\label{17oct01}
\begin{theorem}\label{17oct02}
Assume that condition (iii) holds. That is, we assume that \eqref{17oct03} holds
and that (B) of Theorem \ref{mth-15oct01} $($equivalently $(b_1)$ of\eqref{27dec04}$)$ holds.
Then $m'(z)$ is of bounded characteristic, that is, it is a ratio of two bounded analytic functions.
\end{theorem}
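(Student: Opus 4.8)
The plan is to mirror the proof of the Pommerenke Theorem~\ref{Pommerenke}, now with $m(z)$, $m'(z)$ playing the role of the Green function $g(z,z_*)$, $g'(z,z_*)$ and with $\Im m(z)/\Im z$ playing the role of the kernel $\sum_{\gamma}|\gamma'(z)|/|\gamma(z)-\overline{z_*}|^2$.

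First I reduce to the explicit series representation of Proposition~\ref{pr23oct}. Since $(B)$ holds, Theorem~\ref{ths} together with Proposition~\ref{27dec05} gives property $(b)$, hence the convergence condition \eqref{23oct1} (equivalently \eqref{12oct05}--\eqref{12oct06}) is in force and Proposition~\ref{pr23oct} applies: $m'(z)$ is given by \eqref{26sep05} and $\Im m(z)/\Im z$ by \eqref{26sep06}. In particular, the identity term of the first series in \eqref{26sep06} equals $1$, so $\Im m(z)/\Im z\ge 1$ on $\mathbb C_+$. By Lemma~\ref{L:29aug01} the same formulas hold for every subgroup $\Gamma_n$ (with $m$, $\Gamma$ replaced by $m_n$, $\Gamma_n$), and moreover $m_n\to m$, $m_n'\to m'$ locally uniformly on $\mathbb C_+$, the critical points converge, $c_k^{(n)}\to c_k$, $m_n(c_k^{(n)})\to m(c_k)$, and $g_n(c_k^{(n)},z_*)\to g(c_k,z_*)$.

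Next I introduce the auxiliary function. Let $B=\prod_k B_k$, where $B_k$ is the Blaschke product over the $\Gamma$-orbit of the zero $c_k$ of $m'$ on the $k$-th boundary semicircle of $\cF$, normalized to be positive at $z_*$; by $(A)$ in the form \eqref{17oct03} this product converges uniformly on compact subsets of $\mathbb C_+$, and its zeros in $\mathbb C_+$ coincide with the zeros of $m'$, which are simple by \eqref{11sep7}, so $B/m'$ is holomorphic in $\mathbb C_+$. Put
\[
\varphi(z)=\left|\frac{B(z)}{m'(z)}\right|\frac{\Im m(z)}{\Im z}
=\sum_{\gamma\in\Gamma}\left|\frac{B(z)\gamma'(z)}{m'(z)}\right|
+\sum_{\gamma\in\Gamma}\left|\frac{B(z)\gamma'(z)}{m'(z)\,\gamma^2(z)}\right| ,
\]
the two forms agreeing by \eqref{26sep06}. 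Each summand is the modulus of a function holomorphic in $\mathbb C_+$, so $\varphi$ is subharmonic; and, exactly as for $\ff$ in the proof of Theorem~\ref{Pommerenke} (using $|m'(\gamma z)|=|m'(z)|/|\gamma'(z)|$, $(\Im m/\Im z)(\gamma z)=(\Im m/\Im z)(z)/|\gamma'(z)|$ and the automorphy of $|B|$), $\varphi$ is $\Gamma$-automorphic. The target estimate is $\varphi(z)\le 1$ on $\mathbb C_+$. Once this is known, $\Im m(z)/\Im z\ge 1$ forces $|B(z)/m'(z)|\le 1$ on $\mathbb C_+$, so both $B$ and $B/m'$ are bounded holomorphic functions and $m'=B/(B/m')$ displays $m'$ as a ratio of two bounded analytic functions; note that the clean bound $\Im m/\Im z\ge 1$ makes the auxiliary factor $(z-\overline{z_*})^{-2}$ of Theorem~\ref{Pommerenke} superfluous here.

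It remains to prove $\varphi\le 1$, and here I pass to the finitely generated approximation exactly as in Theorem~\ref{Pommerenke}. Define $\varphi_n$ by the same formula with $\Gamma$, $m'$, $B$ replaced by $\Gamma_n$, $m_n'$, $B^{(n)}=\prod_k B_k^{(n)}$ (with $B_k^{(n)}\equiv 1$ whenever the $k$-th semicircle is not part of $\partial\cF_n$). Since $\Gamma_n$ is finitely generated, all the series defining $\varphi_n$ converge to functions continuous on $\cF_n$, and on the fundamental domain of $\Gamma_n$, up to the boundary; $\varphi_n$ is subharmonic and $\Gamma_n$-automorphic, hence $\varphi_n=\Phi_n\circ\Lambda_n$ with $\Phi_n$ subharmonic on $\Omega_n$ and continuous up to $\partial\Omega_n$, so $\max\Phi_n$ is attained on $\partial\Omega_n$. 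Pulling back, $\max\varphi_n$ is attained on the part of the boundary of the fundamental domain lying on $\mathbb R$; there, for real $x$, all terms of \eqref{26sep05} (written for $\Gamma_n$) are positive, hence $|m_n'(x)|=\sum_{\gamma\in\Gamma_n}\frac{1}{|\gamma^{21}x+\gamma^{22}|^2}+\sum_{\gamma\in\Gamma_n}\frac{1}{|\gamma^{11}x+\gamma^{12}|^2}$ and therefore $\varphi_n(x)=|B^{(n)}(x)|=1$. Thus $\varphi_n\le 1$ on $\mathbb C_+$. Fixing $z\in\mathbb C_+$ and letting $n\to\infty$, Lemma~\ref{L:29aug01} gives $m_n'(z)\to m'(z)$ and the sums over $\Gamma_n$ tend to the sums over $\Gamma$, while $|B^{(n)}(z)|=\prod_k|g_n(c_k^{(n)},z)|\to\prod_k|g(c_k,z)|=|B(z)|$; hence $\varphi(z)\le 1$. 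The one genuinely delicate point is this last limit: it is not a term-by-term passage but needs a dominated-convergence control of the tail of $\prod_k B_k^{(n)}$ at $z_*$, available precisely because of the Widom-type hypothesis $(A)$ — the very argument used in Theorem~\ref{Pommerenke}; the remaining ingredients (subharmonicity and automorphy of $\varphi_n$, the maximum principle on the finitely connected $\Omega_n$, and the boundary identity for $|m_n'(x)|$ just used) are routine once Proposition~\ref{pr23oct} provides the series representation.
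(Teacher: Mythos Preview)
Your proposal is correct and follows essentially the same approach as the paper: define the same auxiliary automorphic subharmonic function $\varphi=|B/m'|\cdot(\Im m/\Im z)$, prove the finite-group version $\varphi_n\le 1$ via the maximum principle on $\Omega_n$ together with the boundary identity $|m_n'(x)|=\sum_{\gamma\in\Gamma_n}\gamma'(x)(1+1/\gamma(x)^2)$, and then pass to the limit using Lemma~\ref{L:29aug01} and the dominated-convergence argument for $B^{(n)}\to B$ exactly as in Theorem~\ref{Pommerenke}. Your explicit remark that $\Im m/\Im z\ge 1$ (from the identity term) replaces the factor $(z-\overline{z_*})^{-2}$ of the Green-function case is precisely why the paper aims directly at $|B/m'|\le 1$.
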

\begin{proof}
Let $B_k$ be the Blaschke product over the orbit of $c_{k}$
\begin{equation*}
B_k(z)=\prod\limits_{\gamma\in\Gamma}\frac{\gamma (z)-c_{k}}{\gamma (z)-\overline{c_{k}}}\ d_\gamma,\quad z\in\mathbb C_+,
\end{equation*}
where $|d_\gamma|=1$ are chosen so that the factors in $B_k$ are positive at $z_*$.
It converges since $\Gamma$ is of convergent type.
We now consider
\begin{equation*}
B(z)= \prod\limits_{k}B_k(z).
\end{equation*}
This product converges due to the assumption \eqref{17oct03}. Moreover, it converges uniformly on compact subsets of $\mathbb C_+$.

Our goal is to prove that $\frac{B(z)}{m'(z)}$ is a bounded analytic function on $\mathbb C_+$.
More precisely, that
\begin{equation}\label{17oct06}
\left|\frac{B(z)}{m'(z)}\right|\le 1,\quad z\in\mathbb C_+.
\end{equation}
It turns out that it is easier to prove even stronger inequality
\begin{equation}\label{17oct07}
\left|\frac{B(z)}{m'(z)}\right|
\sum\limits_{\gamma\in\Gamma}
|\gamma'(z)|
\left(
1+\dfrac{1}{|\gamma (z)|^2}
\right)
\le 1,
\quad z\in\mathbb C_+.
\end{equation}
Easier because of the automorphic property of the latter function. Recall here that the series in \eqref{17oct07}
converges to a function continuous on $\mathbb C_+$, due to the assumption (B) of Theorem \ref{mth-15oct01} (equivalently ($b_1$) of \eqref{27dec04} ).
In other words we will prove that
\begin{equation}\label{17oct08}
\sum\limits_{\gamma\in\Gamma}
\left|
\dfrac{B(z)\gamma'(z)}{m'(z)}
\right|
+
\left|
\dfrac{B(z)\gamma'(z)}{m'(z)\gamma^2(z)}
\right|
\le 1,
\quad z\in\mathbb C_+.
\end{equation}
Observe that
$$
\dfrac{B(z)\gamma'(z)}{m'(z)}
\quad\text{and}\quad
\dfrac{B(z)\gamma'(z)}{m'(z)\gamma^2(z)}
$$
are holomorphic on $\mathbb C_+$. Therefore, their absolute values are
subharmonic functions on $\mathbb C_+$. Hence, the sum in \eqref{17oct08}
is a subharmonic function. Also the sum is automorphic with respect to
$\Gamma$.

We consider first the finitely generated approximation described in Section \ref{FZ}.
Recall that
$$
\L_n:\bbC_+/\G_n\simeq\O_n.
$$
Let $c_{k}^{(n)}$ be the
zero of $m'_n(z)$ on the $k$-th semicircle.
Let $B_k^{(n)}$ be the Blaschke product over the orbit of $c_{k}^{(n)}$ under $\Gamma_n$
$$
B_k^{(n)}(z)=\prod\limits_{\gamma\in\Gamma_n}\frac{\gamma(z)-c_{k}^{(n)}}{\gamma(z)-\overline{c_{k}^{(n)}}}\ d_\gamma,\quad z\in\mathbb C_+,
$$
if $k$-th semicircle is a part of the boundary of $\mathcal F_n$, and $B_k^{(n)}(z)=1$ otherwise.
We now consider
$$
B^{(n)}(z)= \prod\limits_{k}B_k^{(n)}(z).
$$
We are going to prove this approximative version of \eqref{17oct08}
\begin{equation}\label{17oct09}
\ff_n(z):=\sum\limits_{\gamma\in\Gamma_n}
\left|
\dfrac{B^{(n)}(z)\gamma'(z)}{m'_n(z)}
\right|
+
\left|
\dfrac{B^{(n)}(z)\gamma'(z)}{m'_n(z)\gamma^2(z)}
\right|
\le 1,
\quad z\in\mathbb C_+.
\end{equation}
Advantage of the function in \eqref{17oct09} over the function in \eqref{17oct08} is that the series in \eqref{17oct09}
converges in $\mathcal F_n$ and also on the boundary of $\mathcal F_n$ to a function continuous on $\mathcal F_n$ and
up to the boundary of $\mathcal F_n$ (including infinity),  since $\Gamma_n$ is finitely generated. The same is true for the fundamental domain of $\Gamma_n$, which is the union of $\mathcal F_n$ and the reflection of $\mathcal F_n$ about the $0$-th semicircle.

Similar to what we did in Section \ref{PT}, we define a subharmonic function $\fF_n(\l)$, $\l\in\O_n$, by
$$
\ff_n(z)=\fF_n(\L_n(z)).
$$
The function $\fF_n(\l)$ is continuous in $\O_n=\mathbb C\setminus E_n$ and also up to $E_n$.
By subharmonicity,  it attains its maximum on the boundary of $\O_n$. Thus, the maximum of $\ff_n(z)$ is attained on the part of the boundary of the fundamental domain that lies on the real axis. Recall that on the boundary of the fundamental domain all the series below converge to continuous functions. Therefore, for real $z$ on the boundary of the fundamental domain of $\Gamma_n$ we have,
by \eqref{26sep05}, \eqref{26sep04}, \eqref{25aug01} and \eqref{12oct04}, that
\begin{equation*}
\frac{1}{|m_n'(z)|}
\sum\limits_{\gamma\in\Gamma_n}
|B^{(n)}(z)\gamma'(z)|
+
\left|
\dfrac{B^{(n)}(z)\gamma'(z)}{\gamma^2(z)}
\right|
= 1.
\end{equation*}
Here we used the fact that $\gamma(z)$ is real for real $z$ and that $\gamma'(z)$ is positive for real $z$.
Hence, \eqref{17oct09} follows, which is the approximative version of \eqref{17oct08}.

Now we want to pass to the limit in \eqref{17oct09} for arbitrary fixed $z\in\mathbb C_+$ as $n$ goes to infinity.
By Lemma \ref{L:29aug01}, $m'_n(z)$ converges to $m'(z)$.
The sum over $\Gamma_n$
converges to the sum over $\Gamma$.
It remains to show that $|B^{(n)}(z)|$ converges to $|B(z)|$.
Note that $|B^{(n)}_k(z)|=|g_n(c_{k}^{(n)}, z)|$ converges to
$|g(c_{k}, z)|=|B_k(z)|$, by Lemmas \ref{L:28aug01} and \ref{L:29aug01}. 
Further, $k\ne 0$,
$$
|B^{(n)}_k(z_*)|=|g_n(c_{k}^{(n)}, z_*)|\ge |g(c_{k}^{(n)}, z_*)| \ge |g(c_{k}, z_*)|.
$$
By assumption \eqref{17oct03} the product
$$
\prod\limits_{k\ne 0}|g(c_{k}, z_*)|
$$
converges (that is, it is greater than $0$). Then, by the Dominated Convergence
theorem\footnote{This case reduces to the standard Dominated Convergence by applying $(-\log)$ to the products.},
$$
\lim_{n\to\infty}|B^{(n)}(z_*)|=\lim_{n\to\infty}\prod\limits_{k\ne 0}|B^{(n)}_k(z_*)|
=\prod\limits_{k\ne 0}\lim_{n\to\infty}|B^{(n)}_k(z_*)|
$$
$$
=
\prod\limits_{k\ne 0}|B_k(z_*)|=|B(z_*)|.
$$
There exists a subsequence $n_j$ such that $B^{(n_j)}(z)$
converges for all $z\in\mathbb C_+$. Let
$$
\widetilde B(z)=\lim_{j\to\infty}B^{(n_j)}(z).
$$
Fix any $z\in\mathbb C_+$.
Then by Fatou's lemma\footnote{Same explanation as in the previous footnote.},
\begin{align*}
|\widetilde B(z)|&=\lim_{j\to\infty}|B^{(n_j)}(z)|=\lim_{j\to\infty}\prod\limits_{k}|B^{(n_j)}_k(z)|
\\
&\le\prod\limits_{k}\lim_{j\to\infty}|B^{(n_j)}_k(z)|
=\prod\limits_{k}|B_k(z)|=|B(z)|.
\end{align*}
Thus
$$
|\widetilde B(z)|\le |B(z)|,\quad z\in\mathbb C_+.
$$
Since
$$
|\widetilde B(z_*)|= |B(z_*)|,
$$
the equality must hold
\begin{equation}\label{24oct02}
|\widetilde B(z)|= |B(z)|,\quad z\in\mathbb C_+.
\end{equation}
Thus we get \eqref{17oct07} and, therefore, \eqref{17oct06}.
Since \eqref{24oct02} holds for every subsequential limit $\widetilde B(z)$ of $B^{(n)}(z)$,
we get
$$
 B(z)=\lim_{n\to\infty}B^{(n)}(z).
$$
\end{proof}
\if{
\begin{remark}
Actually we proved that
\begin{equation}\label{31aug01}
\left|
\frac{B(z)}{g'(z, z_*)}
\sum\limits_{\gamma\in\Gamma}
\dfrac{\gamma'(z)}{(\gamma(z)-\overline{z_*})^2}
\right|
\le
\left|
\frac{B(z)}{g'(z, z_*)}
\right|
\sum\limits_{\gamma\in\Gamma}
\dfrac{|\gamma'(z)|}{|\gamma(z)-\overline{z_*}|^2}
\le 1,
\quad z\in\mathbb C_+.
\end{equation}
\end{remark}
}\fi
\begin{corollary}\label{17oct11}
$m'(z)$ is of bounded characteristic as the ratio of the following two bounded analytic functions
$$
B(z)\quad\text{and}\quad \frac{B(z)}{m'(z)}.
$$
\end{corollary}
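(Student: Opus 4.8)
The plan is to read the corollary off directly from the estimate established in the proof of Theorem~\ref{17oct02}. First I would recall that, by construction, $B(z)=\prod_{k}B_k(z)$ is a Blaschke product in $\mathbb C_+$: each $B_k$ is the Blaschke product over the $\Gamma$-orbit of the critical point $c_k$, convergent because $\Gamma$ is of convergent type, and the product over $k$ converges by hypothesis \eqref{17oct03}, uniformly on compact subsets of $\mathbb C_+$. Hence $B$ is a bounded analytic function on $\mathbb C_+$ with $|B(z)|\le 1$.

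Next I would invoke the inequality \eqref{17oct06}, proved in Theorem~\ref{17oct02}, namely $|B(z)/m'(z)|\le 1$ for all $z\in\mathbb C_+$, to conclude that $B/m'$ is also a bounded analytic function. The one point deserving a word of care is that $B/m'$ is genuinely holomorphic on $\mathbb C_+$ (all singularities removable): the zeros of $m'$ are exactly the $\Gamma$-orbits of the points $c_k$, one on each boundary semicircle of $\mathcal F$, and they are simple, since $m'(z)=\theta'(\Lambda(z))\Lambda'(z)$ with $\Lambda'$ nonvanishing and $\theta'$ having simple zeros by the Schwarz--Christoffel representation of Proposition~\ref{prop17} (see also the localization of the zeros of $m'$ in Lemma~\ref{L:29aug01}); on the other hand $B$ has simple zeros precisely at these same points and at no others, because $\Lambda$ is injective on $\mathcal F$ and the projections $\mu_k$ of the $c_k$ are pairwise distinct. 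Thus the zeros cancel exactly and $B/m'$ extends analytically.

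Finally, since neither factor vanishes identically, one may write
\begin{equation*}
m'(z)=\frac{B(z)}{\,B(z)/m'(z)\,},
\end{equation*}
exhibiting $m'$ as a quotient of two bounded analytic functions on $\mathbb C_+$, which is precisely the assertion that $m'$ is of bounded characteristic. I do not expect any genuine obstacle here: the entire analytic content of the corollary is the bound $|B/m'|\le 1$, which is Theorem~\ref{17oct02}, and what remains is only the bookkeeping of zero multiplicities sketched above together with the trivial observation that a Blaschke product is a bounded analytic function.
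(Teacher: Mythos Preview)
Your proposal is correct and is exactly the (implicit) argument the paper intends: the corollary is stated without proof because it is an immediate consequence of the bound $|B/m'|\le 1$ established in Theorem~\ref{17oct02}, together with the fact that $B$ is a Blaschke product. Your extra discussion of zero multiplicities is harmless but unnecessary, since the bound $|B/m'|\le 1$ alone already forces every zero of $m'$ to be a removable singularity of $B/m'$ by Riemann's theorem.
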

\begin{remark}\label{17oct12}
Since function ${B(z)}/{m'(z)}$
is bounded, it can be written as
$$
\frac{B(z)}{m'(z)}=I(z)\cdot O(z),
$$
where $I(z)$ is an inner function and $O$ is a bounded outer function.
Moreover, $I(z)$ is a singular inner function, since the left hand side does not have zeros in $\mathbb C_+$.
Therefore,
\begin{equation}\label{17oct25}
m'(z)=\frac{B(z)}{O(z)I(z)}.
\end{equation}
\end{remark}
\begin{theorem} \label{17oct14} Function
$
{B(z)}/{m'(z)}
$
is outer. That is, $I(z)=1$.
\end{theorem}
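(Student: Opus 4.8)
The plan is to repeat the proof of Theorem~\ref{02sep04} almost verbatim, replacing the Green function $g'(z,z_*)$ and the series $\sum_{\gamma}|\gamma'(z)|/|\gamma(z)-\overline{z_*}|^2$ by $m'(z)$ and by
$$
S(z):=\sum_{\gamma\in\Gamma}|\gamma'(z)|\left(1+\frac1{|\gamma(z)|^2}\right)
=\sum_{\gamma\in\Gamma}\frac1{|\gamma^{21}z+\gamma^{22}|^2}+\sum_{\gamma\in\Gamma}\frac1{|\gamma^{11}z+\gamma^{12}|^2},
$$
which by \eqref{26sep06} is nothing but $\Im m(z)/\Im z$ and which, condition~($b$) being in force, converges to a function continuous on $\mathbb C_+$ by Proposition~\ref{pr23oct}. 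Three ingredients are needed: (i) a boundary identity $|m'(x)|=S(x)$ a.e.\ on $\mathbb R$; (ii) the subharmonicity estimate $\tfrac1\pi\int_{\mathbb R}\log S(x)\,\tfrac{\Im z\,dx}{|x-z|^2}\ge\log S(z)$ for all $z\in\mathbb C_+$; and (iii) the pointwise bound $|m'(z)|\le S(z)$ on $\mathbb C_+$, which is immediate from \eqref{26sep05} and the triangle inequality.

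For (i) I would argue as in Lemma~\ref{02sep01}, but through the Riesz--Herglotz structure of $m$ rather than a Blaschke product. Since $m'$ is of bounded characteristic (Theorem~\ref{17oct02}), it has finite non-tangential boundary values a.e.; on the other hand, at every real $x$ the terms of the series \eqref{26sep05} are positive, because $\gamma'(x)>0$ and $\gamma(x)$ is real, so the partial sums increase to the boundary value of $m'$, whence $m'(x)=|m'(x)|=S(x)$ a.e. For (ii) I would copy the proof of Lemma~\ref{02sep06}: write $S(z)=\sum_j\overline{\psi_j(z)}\,\psi_j(z)$ with the $\psi_j$ running through $1/(\gamma^{21}z+\gamma^{22})$ and $1/(\gamma^{11}z+\gamma^{12})$, $\gamma\in\Gamma$, each holomorphic and zero-free on $\mathbb C_+$; the logarithm of every finite partial sum is subharmonic by the Cauchy--Schwarz inequality, these partial sums are bounded below by $1$ (the term $1$ contributed by the identity element of $\Gamma$ to the first sum), so their logarithms are nonnegative, and the Monotone Convergence Theorem gives the Poisson inequality for $S$.

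Finally I would combine these with Remark~\ref{17oct12}. By \eqref{17oct25}, $m'(z)=\dfrac1{O(z)}\cdot\dfrac{B(z)}{I(z)}$ with $O$ bounded outer, $B$ a Blaschke product and $I$ singular inner; hence $|m'(x)|=1/|O(x)|$ a.e., and since $O$ is outer,
$$
\frac1\pi\int_{\mathbb R}\log|m'(x)|\,\frac{\Im z\,dx}{|x-z|^2}=-\frac1\pi\int_{\mathbb R}\log|O(x)|\,\frac{\Im z\,dx}{|x-z|^2}=-\log|O(z)|.
$$
Using (i), then (ii), then (iii),
$$
-\log|O(z)|=\frac1\pi\int_{\mathbb R}\log S(x)\,\frac{\Im z\,dx}{|x-z|^2}\ge\log S(z)\ge\log|m'(z)|,
$$
so $S(z)\le 1/|O(z)|$, and in particular $|m'(z)|\le 1/|O(z)|$, i.e.\ $\bigl|B(z)/I(z)\bigr|=|O(z)m'(z)|\le1$ on $\mathbb C_+$. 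As $B/I$ is analytic with boundary modulus $1$ a.e.\ on $\mathbb R$, it is inner; comparing its canonical factorization with that of $B$ (same zeros, trivial singular part) forces $I=1$, exactly as at the end of the proof of Theorem~\ref{02sep04}.

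The step I expect to be the main obstacle is ingredient (i): unlike in the Pommerenke situation, $m$ is not a Blaschke product, so the Appendix corollary behind Lemma~\ref{02sep01} is not directly available, and the a.e.\ coincidence of the boundary value of $m'$ with that of the series \eqref{26sep05} must be extracted from the pure-point Riesz--Herglotz representation of $m$ supplied by condition~($b$). The rest is a routine transcription of Section~\ref{PT}.
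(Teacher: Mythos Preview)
Your proposal is correct and matches the paper's proof essentially line by line: ingredients (ii) and (iii) are exactly Lemma~\ref{17oct17} and inequality~\eqref{17oct22}, and your final combination is verbatim the paper's. For the obstacle you flag in (i), the paper's device (Lemma~\ref{17oct15}, proved as Corollary~\ref{19oct10}) is to pass to the singular inner function $w(z)=e^{im(z)}$ and its finite approximants $w_n=e^{im_n}$; since $\Im m_n\le\Im m$ by \eqref{26sep06} one has $|w_n|\ge|w|$, so Frostman's Theorem~\ref{19oct11} applies and yields $|w'(x)|=\lim_n|w_n'(x)|$, which unwinds to $m'(x)=\sum_{\gamma\in\Gamma}\gamma'(x)\bigl(1+1/\gamma(x)^2\bigr)$ a.e.
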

The following facts are used to prove Theorem \ref{17oct14}.
\begin{lemma}\label{17oct15} $($Corollary \ref{19oct10} of Appendix$)$.
Let $x\in\mathbb R$. Then
a finite nontangential limits $m(x)$ and $m'(x)$ exist, $m(x)$ is real,
if and only if
$$
\sum\limits_{\gamma\in\Gamma}\gamma'(x)+
\frac{\gamma'(x)}{\gamma^2 (x)}<\infty .
$$
In this case
\begin{equation}\label{17oct16}
m'(x)=\sum\limits_{\gamma\in\Gamma}
\gamma'(x)
\left(
1+\dfrac{1}{\gamma^2 (x)}
\right).
\end{equation}
Hence, in our case ($m$ is a pure point and $m'$ is of bounded characteristic) \eqref{17oct16} holds almost everywhere on $\mathbb R$.
\end{lemma}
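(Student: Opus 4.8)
The plan is to deduce the lemma from the general Carath\'eodory--Julia description of the nontangential boundary behaviour of a Herglotz function and of its derivative (the Appendix result cited in the statement, Corollary~\ref{19oct10}), applied to $u=m$, whose Riesz--Herglotz data is made explicit in Proposition~\ref{pr23oct}. So the first thing I would record is that, under the standing convergence hypothesis \eqref{12oct05}, the symmetric Martin function $m=m_++m_-$ is a Herglotz function (indeed $\Im m=M\circ\L\ge0$) whose representing measure is \emph{pure point}, carried by the countable mutually disjoint orbits $\{\gamma(\infty)\}_{\gamma\in\Gamma}$ and $\{\gamma(0)\}_{\gamma\in\Gamma}$, with respective masses $(\gamma^{21})^{-2}$ and $(\gamma^{22})^{-2}$.

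Next I would carry out the elementary half of the argument on the vertical segment. Fix $x\in\mathbb R$ lying off the (countable, hence null) support of the measure. By \eqref{26sep06}, for $z=x+iy$,
$$
\frac{\Im m(x+iy)}{y}
=\sum_{\gamma\in\Gamma}\frac{1}{(\gamma^{21}x+\gamma^{22})^2+(\gamma^{21}y)^2}
+\sum_{\gamma\in\Gamma}\frac{1}{(\gamma^{11}x+\gamma^{12})^2+(\gamma^{11}y)^2},
$$
and every summand on the right increases as $y\downarrow0$; hence by the Monotone Convergence Theorem
$$
\lim_{y\downarrow0}\frac{\Im m(x+iy)}{y}
=\sum_{\gamma\in\Gamma}\frac{1}{(\gamma^{21}x+\gamma^{22})^2}
+\sum_{\gamma\in\Gamma}\frac{1}{(\gamma^{11}x+\gamma^{12})^2}
=\sum_{\gamma\in\Gamma}\gamma'(x)\left(1+\frac{1}{\gamma^2(x)}\right)\in[0,+\infty],
$$
where the last identity is \eqref{26sep05} read on the real axis, using $\gamma(x)\in\mathbb R$, $\gamma'(x)=(\gamma^{21}x+\gamma^{22})^{-2}>0$ and $\gamma'(x)/\gamma^2(x)=(\gamma^{11}x+\gamma^{12})^{-2}>0$.

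Then I would invoke Corollary~\ref{19oct10}: for a Herglotz function $u$, a finite nontangential limit $u(x)$ (then automatically real) and a finite nontangential limit $u'(x)$ exist exactly when the vertical limit $\lim_{y\downarrow0}\Im u(x+iy)/y$ is finite, in which case $\Im u(z)/\Im z$ tends nontangentially to that same value and it equals $u'(x)$. For $u=m$, combined with the two displays above, this is precisely the asserted equivalence and formula~\eqref{17oct16}. The concluding ``hence'' sentence is then routine: under hypothesis (iii) the function $m'$ is of bounded characteristic (Theorem~\ref{17oct02}, Corollary~\ref{17oct11}) and is not identically $0$, so it has finite nontangential boundary values a.e.\ on $\mathbb R$; discarding also the countable support of the measure, the equivalence just proved forces $\sum_{\gamma\in\Gamma}\bigl(\gamma'(x)+\gamma'(x)/\gamma^2(x)\bigr)<\infty$ and \eqref{17oct16} at a.e.\ $x\in\mathbb R$.

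The one step carrying genuine content is the passage from the elementary vertical limit to an honest nontangential limit, with the accompanying claim that $m(x)$ is real and $m'(x)$ finite there; this is the Carath\'eodory--Julia theorem, which is exactly what the Appendix proves (and is why the lemma is phrased as a corollary of Corollary~\ref{19oct10}). Everything else above is direct substitution into Proposition~\ref{pr23oct}, \eqref{26sep05} and \eqref{26sep06}, so the proof itself should be no longer than a paragraph once the Appendix is in place.
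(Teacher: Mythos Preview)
Your argument is mathematically sound, but there is a structural confusion and the route differs from the paper's. You invoke Corollary~\ref{19oct10} as if it were a general Carath\'eodory--Julia theorem for Herglotz functions; in the paper, however, Corollary~\ref{19oct10} \emph{is} Lemma~\ref{17oct15}---the same statement about the specific function $m$, with its proof deferred to the Appendix---so citing it as a black box is circular. What your argument actually requires is the Herglotz translation of Theorem~\ref{21oct06}: for Herglotz $u$, finite nontangential limits $u(x)\in\mathbb R$ and $u'(x)$ exist iff $\lim_{y\downarrow 0}\Im u(x+iy)/y<\infty$, and then $u'(x)$ equals this limit. That is standard and follows either from Theorem~\ref{21oct06} applied to $w=(u-i)/(u+i)$ (the device of Lemma~\ref{23nov01}, used here at a finite boundary point rather than at $\infty$), or, since the representing measure of $m$ is explicit, directly from Monotone and Dominated Convergence on the integral representation. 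With that in hand, your MCT computation of the vertical limit via \eqref{26sep06} is correct and immediately gives \eqref{17oct16}.

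The paper's own proof (of Corollary~\ref{19oct10}) proceeds differently. It passes to the inner function $w=e^{im}$ and its partial-sum approximants $w_n=e^{im_n}$; by \eqref{26sep06}, $\Im m_n$ increases with $n$, so $|w_n|\ge|w|$, and Frostman's theorem (Theorem~\ref{19oct11}) gives $w'(x)=\lim w'_n(x)$, whence $m'(x)=\lim m'_n(x)=\sum_\gamma\gamma'(x)(1+\gamma(x)^{-2})$. Your route is shorter---one MCT step on \eqref{26sep06} plus one appeal to Carath\'eodory--Julia---while the paper's has the advantage of running in exact parallel with Corollary~\ref{19oct09} (the Blaschke case used for $g'$), reusing the same monotonicity-plus-Frostman mechanism. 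Your treatment of the final ``hence'' clause is correct as written.
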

\begin{lemma}\label{17oct17} For every $z\in\mathbb C_+$ the following inequality holds
\begin{equation}\label{21oct01}
\frac 1 \pi \int\limits_{\mathbb R}\log
\sum\limits_{\gamma\in\Gamma}
\left(
\gamma'(x)+
\frac{\gamma'(x)}{\gamma (x)^2}
\right)
\frac{\Im z}{|x-z|^2}dx
\ge
\log
\sum\limits_{\gamma\in\Gamma}
\left(
|\gamma'(z)|+
\frac{|\gamma'(z)|}{|\gamma (z)|^2}
\right).
\end{equation}
\end{lemma}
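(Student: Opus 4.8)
The plan is to mirror, almost word for word, the proof of Lemma~\ref{02sep06}; the single new observation needed is that the second family of summands, $\gamma'(z)/\gamma^2(z)$, is again a squared modulus of a function holomorphic in $\mathbb C_+$. Indeed, from $\gamma'(z)=(\gamma^{21}z+\gamma^{22})^{-2}$ one gets $\gamma'(z)/\gamma^2(z)=(\gamma^{11}z+\gamma^{12})^{-2}$ (cf. \eqref{26sep05}), so, setting
\[
\psi_\gamma(z)=\frac{1}{\gamma^{21}z+\gamma^{22}},\qquad \chi_\gamma(z)=\frac{1}{\gamma^{11}z+\gamma^{12}},
\]
which are holomorphic in $\mathbb C_+$ (their only poles, $\gamma^{-1}(\infty)$ and $\gamma^{-1}(0)$, are real), one has $|\gamma'(z)|=|\psi_\gamma(z)|^2$ and $|\gamma'(z)|/|\gamma(z)|^2=|\chi_\gamma(z)|^2$. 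Hence the quantity under the logarithm in \eqref{21oct01} is the single function
\[
v(z):=\sum_{\gamma\in\Gamma}\left(|\psi_\gamma(z)|^2+|\chi_\gamma(z)|^2\right),
\]
a sum of squared moduli of holomorphic functions, finite for $z\in\mathbb C_+$ under hypothesis $(B)$ of Theorem~\ref{mth-15oct01} by \eqref{26sep06}; note that on $\mathbb R$ both families of terms are positive, so $v(x)$ is indeed the integrand. Thus \eqref{21oct01} asserts $\frac1\pi\int_{\mathbb R}\log v(x)\,\frac{\Im z\,dx}{|x-z|^2}\ge\log v(z)$ for $z\in\mathbb C_+$.

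Next I would pass to finite truncations. Enumerate $\Gamma=\{\gamma_k\}_{k\ge 1}$ with $\gamma_1=1_\Gamma$ and set $v_n(z)=\sum_{k=1}^{n}\left(|\psi_{\gamma_k}(z)|^2+|\chi_{\gamma_k}(z)|^2\right)$. Running the computation from the proof of Lemma~\ref{02sep06} on the list of $2n$ holomorphic functions $\psi_{\gamma_1},\chi_{\gamma_1},\dots,\psi_{\gamma_n},\chi_{\gamma_n}$ shows that $v_n$ is subharmonic and that $\log v_n$ is subharmonic (the latter by the Cauchy--Schwarz inequality), whence the sub-mean-value inequality against the Poisson kernel of $\mathbb C_+$,
\[
\frac1\pi\int_{\mathbb R}\log v_n(x)\,\frac{\Im z\,dx}{|x-z|^2}\ \ge\ \log v_n(z),\qquad z\in\mathbb C_+.
\]
Since $v_n$ increases in $n$, so does $\log v_n$, and the term coming from $\gamma_1=1_\Gamma$ contributes $1+x^{-2}\ge 1$ on $\mathbb R$, so the integrands $\log v_n(x)$ are nonnegative. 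The Monotone Convergence Theorem then lets $n\to\infty$ on both sides and yields \eqref{21oct01}.

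I do not expect a serious obstacle: the argument is an essentially mechanical adaptation of Lemma~\ref{02sep06}. The only new feature is that $v_n$ now carries genuine poles on the real line (on the orbits of $0$ and of $\infty$), so that $\log v_n(x)=+\infty$ at those countably many points; this is harmless, as such points only enlarge the left-hand side of the sub-mean-value inequality and the integrands keep the summable lower bound $0$. The one spot deserving a glance is the validity of the sub-mean-value bound for the subharmonic, logarithmically growing rational function $\log v_n$ in $\mathbb C_+$; it is obtained exactly as in the proof of Lemma~\ref{02sep06}, if one wishes after clearing the real poles of $v_n$ (multiplying through by $|p(z)|^2$, with $p$ the product of the real linear factors $\gamma^{21}z+\gamma^{22}$ and $\gamma^{11}z+\gamma^{12}$ occurring in $v_n$) to reduce to the logarithm of a sum of squared moduli of polynomials, and then subtracting $\log|p(z)|^2$, which equals the Poisson integral of $\log|p(x)|^2$ since $p$ has only real zeros.
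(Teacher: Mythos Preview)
Your proposal is correct and follows essentially the same approach as the paper: the paper packages your pair $(\psi_\gamma,\chi_\gamma)$ into a single $2$-vector $\phi_\gamma$ and runs the identical subharmonicity/Cauchy--Schwarz computation for $u_n=\sum_{k\le n}\phi_{\gamma_k}^*\phi_{\gamma_k}$, then passes to the limit by Monotone Convergence using nonnegativity of the integrands. Your explicit choice $\gamma_1=1_\Gamma$ to secure $\log v_n(x)\ge 0$ and your remark on the real poles are a touch more careful than the paper, but the argument is the same.
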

\begin{proof}
Since
$$
\gamma'(z)=\frac{1}{(\gamma^{21}z+\gamma^{22})^2},
$$
one can write
$$
\sum\limits_{\gamma\in\Gamma}
|\gamma'(z)|
\left(
1+\dfrac{1}{|\gamma (z)|^2}
\right)
=
\sum\limits_{\gamma\in\Gamma}\phi_{\gamma}(z)^*\phi_\gamma(z),
$$
where
$$
\phi_{\gamma}(z)=
\begin{bmatrix}
\dfrac{1}{\gamma^{21}z+\gamma^{22}}
\\ \\
\dfrac{1}{\gamma^{21}z+\gamma^{22}}\cdot\dfrac{1}{\gamma(z)}
\end{bmatrix}.
$$
We consider functions
\begin{equation*}
u_n(z)
=\sum\limits_{k=1}^n
|\gamma'_k(z)|
\left(
1+\dfrac{1}{|\gamma_k (z)|^2}
\right)
=
\sum\limits_{k=1}^n\phi_{\gamma_k}(z)^*\phi_{\gamma_k}(z)
,
\quad  \Im z>0.
\end{equation*}
From here we see that $u_n$ is a subharmonic function since
\begin{equation*}
\frac{\partial^2}{\partial z \partial \overline z}u_n(z)
= \sum\limits_{k=1}^n\phi'_{\gamma_k}(z)^*\phi'_{\gamma_k}(z))\ge 0.
\end{equation*}
Also $\log u_n(z)$ is subharmonic, since
\begin{equation*}
\frac{\partial^2}{\partial z \partial \overline z}\log u_n(z)
= -\frac{1}{u_n^2(z)}\frac{\partial u_n}{\partial z}
\frac{\partial u_n}{\partial \overline z}+\frac{1}{u_n}
\frac{\partial^2 u_n}{\partial z \partial \overline z}=
\end{equation*}
$$
\frac{1}{u_n^2(z)}
\left\{
\sum\limits_{k=1}^n\phi_{\gamma_k}(z)^*\phi_{\gamma_k}(z)
\sum\limits_{k=1}^n\phi'_{\gamma_k}(z)^*\phi'_{\gamma_k}(z)
-
\sum\limits_{k=1}^n\phi_{\gamma_k}(z)^*\phi'_{\gamma_k}(z)
\sum\limits_{k=1}^n\phi'_{\gamma_k}(z)^*\phi_{\gamma_k}(z)
\right\},
$$
which is nonnegative by Cauchy-Schwarz inequality. Therefore,
$$
\frac 1 \pi \int\limits_{\mathbb R}\log
\sum\limits_{k=1}^n
\left(
{\gamma_k}'(x)+
\frac{{\gamma_k}'(x)}{{\gamma_k} (x)^2}
\right)
\frac{\Im z}{|x-z|^2}dx
\ge
\log
\sum\limits_{k=1}^n
\left(
|{\gamma_k}'(z)|+
\frac{|{\gamma_k}'(z)|}{|{\gamma_k} (z)|^2}
\right).
$$
We now pass to the limit in this inequality. Since all integrands are nonnegative, the Monotone Convergence Theorem applies and we get \eqref{21oct01}.
\end{proof}
\begin{proof}[Proof of Theorem \ref{17oct14}]
By \eqref{26sep05}, for $z\in\mathbb C_+$
we have
\begin{equation}\label{17oct22}
|m'(z)|\le\sum\limits_{\gamma\in\Gamma}|\gamma'(z)|+
\frac{|\gamma'(z)|}{|\gamma (z)|^2}.
\end{equation}
Now, by Lemmas \ref{17oct15} and \ref{17oct17},
$$
\frac 1 \pi\int\limits_{\mathbb R}\log m'(x)\frac{\Im z}{|x-z|^2}dx
=
\frac 1 \pi \int\limits_{\mathbb R}\log
\sum\limits_{\gamma\in\Gamma}
\left(
\gamma'(x)+
\frac{\gamma'(x)}{\gamma (x)^2}
\right)
\frac{\Im z}{|x-z|^2}dx
$$
\begin{equation*}
\ge
\log \sum_{\g\in\G}
\left(
|\gamma'(z)|+
\frac{|\gamma'(z)|}{|\gamma (z)|^2}
\right)
.
\end{equation*}
On the other hand (see \eqref{17oct25})
\begin{equation*}
\frac 1 \pi\int\limits_{\mathbb R}\log m'(x)\frac{\Im z}{|x-z|^2}dx
=
-\frac 1 \pi\int\limits_{\mathbb R}\log|O(x)|\frac{\Im z}{|x-z|^2}dx = -\log|O(z)|,
\end{equation*}
since $O$ is a bounded outer function. Thus,
\begin{equation}\label{17oct27}
\sum_{\g\in\G}|\gamma'(z)|+
\frac{|\gamma'(z)|}{|\gamma (z)|^2}
\le
\frac{1}{|O(z)|}
,\quad z\in \mathbb C_+.
\end{equation}
Combining \eqref{17oct27} with \eqref{17oct22} and \eqref{17oct25}, we get
$$
\left|\frac{B(z)}{O(z)I(z)}\right|=
|m'(z)|\le \frac{1}{|O(z)|},\quad z\in \mathbb C_+.
$$
That is,
$$
\left|\frac{B(z)}{I(z)}\right|
\le 1.
$$
The latter implies that $I(z)=1$.
\end{proof}

\subsection{Proof of the implication $(i)\Rightarrow (iii)$}

Let $\cH^2(\a)$ be non trivial for all $\a\in\G^*$. Then $H^2(\a)$ is non trivial for all $\a$, that is, the Widom condition holds
$$
\sum_{\mu:\nabla G(\mu,\l_*)=0} G(\mu,\l_*)<\infty.
$$
This in turn implies (A) of condition (iii) in Theorem \ref{mth-15oct01}, since if $\mu_{\l_*}$ is the critical point of $G(\l,\l_*)$ in the $k$-th gap and
$\mu$ is any point in this gap, then
$$
G(\mu,\l_*)\le G(\mu_{\l_*},\l_*).
$$
Now, since the Widom condition holds, $\G$ acts on $\bbR$ dissipatively, that is, there exists a measurable fundamental set $\bbE\subset \bbR$. Let $f$ be a non trivial function from  $\cH^2(\alpha)$.  Then
\begin{equation*}
\int\limits_\bbR |f(x)|^2dx=
\sum\limits_{\gamma\in\Gamma}\
\int\limits_\mathbb E |f(x)|^2\gamma'(x) dx
=
\int\limits_\mathbb E \sum\limits_{\gamma\in\Gamma}\ |f(x)|^2\gamma'(x) dx
,
\end{equation*}
the latter equality is due to Fubini's theorem. Therefore,
\begin{equation*}
|f(x)|^2\sum\limits_{\gamma\in\Gamma}\gamma'(x) <\infty
\end{equation*}
 almost everywhere on $\mathbb E$.
Since $f\neq 0$ (almost everywhere)
  we have
\begin{equation*}
\sum\limits_{\gamma\in\Gamma}\gamma'(x)<\infty\quad \text{for a.e.}\ x\in\bbE.
\end{equation*}
We fix one such $x$, then for $z_0=x+i$ we obtain
$$
\sum\limits_{\gamma\in\Gamma}|\gamma'(z_0)|<\infty.
$$
 By the Harnack inequality we have \eqref{23oct1}. By Proposition \ref{pr23oct},  $(b)$ holds and it is equivalent to
 condition (B) of Theorem \ref{mth-15oct01}.

\subsection{Proof of the implication $(ii)\Rightarrow (i)$}

\begin{lemma}\label{29dec01}
If (a) and (b) hold, then $H^2(\a)$ are non trivial for all $\a\in\G^*$.
\end{lemma}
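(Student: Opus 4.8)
The plan is to reduce everything to Pommerenke's Theorem~\ref{TP}: it is enough to prove that the derivative $g'(z,z_*)$ of the complex Green function of $\G$ is of bounded characteristic, because then the equivalence of (i) and (ii) in Theorem~\ref{TP} immediately gives that $H^2(\a)$ contains a non‑constant function, hence $H^2(\a)\ne\{0\}$, for every $\a\in\G^*$.

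First I would observe that (a) and (b) put us precisely under hypothesis (iii): by Theorem~\ref{ths} together with Proposition~\ref{27dec05}, condition (b) is equivalent to (B) (equivalently $(b_1)$ of \eqref{27dec04}), and condition (a) forces the zeros of $m'(z)$ --- which by Lemma~\ref{L:29aug01} are exactly the orbits of the critical points $c_k$ of the Martin function --- to satisfy the Blaschke condition, that is, \eqref{17oct03}, which is (A). Hence the apparatus of Subsection~\ref{17oct01} applies: by Theorem~\ref{17oct02}, Corollary~\ref{17oct11}, Remark~\ref{17oct12} and Theorem~\ref{17oct14} we may write $m'(z)=B(z)/O(z)$, where $B=\prod_k B_k$ is the convergent Blaschke product over the orbits of the $c_k$ and $O$ is a zero‑free bounded outer function, so that $|B(z)/m'(z)|=|O(z)|$; moreover the key inequality \eqref{17oct07} holds, and after dividing it by $|O(z)|$ it reads
\[
\sum_{\gamma\in\G}|\gamma'(z)|\;\le\;\sum_{\gamma\in\G}|\gamma'(z)|\Bigl(1+\frac{1}{|\gamma(z)|^2}\Bigr)\;\le\;\frac{1}{|O(z)|},\qquad z\in\bbC_+ .
\]

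Next I would compare $g'$ with this sum. The estimate \eqref{02sep05}, obtained from the series \eqref{16sep04} by retaining a single Blaschke factor of $g(z,z_*)$ in each term, gives $|g'(z,z_*)|\le 2\Im z_*\sum_{\gamma}|\gamma'(z)|/|\gamma(z)-\overline{z_*}|^2$; since $|\gamma(z)-\overline{z_*}|\ge\Im z_*$ for $z\in\bbC_+$ and $\gamma(z)\in\bbC_+$, this yields
\[
|g'(z,z_*)|\;\le\;\frac{2}{\Im z_*}\sum_{\gamma\in\G}|\gamma'(z)|\;\le\;\frac{2}{\Im z_*}\cdot\frac{1}{|O(z)|},\qquad z\in\bbC_+ .
\]
Now $1/O$ is holomorphic (an outer function is zero‑free in $\bbC_+$) and of bounded characteristic, being the ratio of the bounded functions $1$ and $O$; a holomorphic function dominated in modulus by $2/(\Im z_*\,|O|)$ is therefore itself of bounded characteristic. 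Thus $g'(z,z_*)$ is of bounded characteristic, and Theorem~\ref{TP} finishes the proof.

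The only delicate point is the opening reduction --- verifying that (a) and (b) jointly trigger condition (iii), so that the factorization $m'=B/O$ and the inequality \eqref{17oct07} of Subsection~\ref{17oct01} are legitimately available; once that is granted the rest is just the two displayed chains of estimates. It is worth stressing that both hypotheses are genuinely used: (b) is exactly what makes the series $\sum_{\gamma}|\gamma'(z)|$ converge (Proposition~\ref{pr23oct}) and what supplies (B), while (a) is what produces the bounded outer majorant $1/O$ through (A).
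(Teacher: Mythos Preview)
Your proof is correct and follows the same overall strategy as the paper: show that $g'(z,z_*)$ is of bounded characteristic and then invoke Pommerenke's Theorem~\ref{TP}. The difference is only in how the domination of $|g'|$ is obtained. The paper works on the boundary: it uses Lemma~\ref{17oct15} to get $\log$-integrability of $m'(x)$, deduces \eqref{9aug3} for $\rho_i(x)=\sum_\gamma \gamma'(x)/(1+\gamma(x)^2)$, and then combines Lemma~\ref{02sep06} with \eqref{02sep05} to produce a harmonic majorant for $\log^+|g'(z,i)|$. You instead stay entirely in the interior: you quote the already-proved inequality \eqref{17oct07} (equivalently \eqref{17oct27}) from Subsection~\ref{17oct01}, use the elementary bound $|\gamma(z)-\overline{z_*}|\ge \Im z_*$, and conclude $|g'(z,z_*)\,O(z)|\le 2/\Im z_*$, so that $g'=(g'O)/O$ is a ratio of bounded analytic functions. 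Your route is a bit more economical in that it recycles \eqref{17oct27} verbatim rather than re-deriving its $\rho_i$-analogue via Lemma~\ref{02sep06}; the paper's route, on the other hand, makes the connection with the density $\rho_i$ explicit, which feeds directly into Proposition~\ref{pr25}.
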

\begin{proof}
By 
Lemma \ref{17oct15}, we have that under assumptions (a) and (b)
$$
m'(x)=\sum\limits_{\gamma\in\Gamma}
\gamma'(x)
\left(
1+\dfrac{1}{\gamma^2 (x)}
\right)
$$
almost everywhere on $\mathbb R$ and that
\begin{equation*}
0<\int\limits_{\mathbb R}\frac{\log m'(x)}{1+x^2} dx<\infty.
\end{equation*}
Then
\begin{equation*}
\rho(x)=\sum_{\g\in\G}{\g'(x)}.
\end{equation*}
also converges almost everywhere on $\mathbb R$ and
\begin{equation}\label{9aug2}
0<\int\limits_{\mathbb R}\frac{\log \rho(x) }{1+x^2}dx<\infty.
\end{equation}
Consider the following function on $\mathbb R$ (compare to \eqref{16sep02}, \eqref{16sep03}, also to \eqref{30aug05})
$$
\rho_{i}(x)=\sum_{\g\in\G}\frac{\g'(x)}{1+\g(x)^2}.
$$
Since
$$
\frac 1{1+x^2}\le \rho_{i}(x)\le \rho(x),
$$
we have
\begin{equation}\label{9aug3}
-\infty<\int\frac{\log \rho_{i}(x) dx}{1+x^2}<\infty .
\end{equation}
Combining inequality \eqref{21oct02} of Lemma \ref{02sep06} with inequality \eqref{02sep05}, we conclude that
$\log^+|g'(z,i)|$ has a harmonic majorant in the upper half plane. This means that
$g'(z, i)$ is of bounded type on the upper half plane. Therefore, by Theorem \ref{TP},
all $H^2(\a)$ are non-trivial.
\end{proof}
\noindent
Inequalities \eqref{9aug2} and \eqref{9aug3} allow to define
an outer function $\phi(z)$ by
\begin{equation}\label{9aug4}
|\phi(x)|^2=\frac{\rho_{i}(x)}{\rho(x)}\le 1.
\end{equation}
We denote by $\a_\phi$ the character associated to this function.
\begin{proposition}\label{pr25}
If (a) and (b) hold, then $\cH^2(\a)=\phi H^2(\a_{\phi}^{-1}\a)$.
\end{proposition}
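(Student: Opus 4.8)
The plan is to show that multiplication by $\phi$ is an isometric bijection of $H^2(\a_\phi^{-1}\a)$ onto $\cH^2(\a)$, so the asserted equality of sets holds. Under the hypotheses (a) and (b) the Widom condition holds — it is verified inside the proof of Lemma~\ref{29dec01} — hence $\G$ acts dissipatively on $\bbR$, a measurable fundamental set $\bbE\subset\bbR$ exists, and $\rho(x)=\sum_{\g\in\G}\g'(x)$ and $\rho_i(x)=\sum_{\g\in\G}\g'(x)/(1+\g(x)^2)$ converge for a.e.\ $x$ by \eqref{9aug2} and \eqref{9aug3}. For any $h$ that is character automorphic with a unimodular character, $|h(\g(x))|=|h(x)|$ a.e., and unfolding $\bbR=\bigcup_{\g\in\G}\g(\bbE)$ exactly as in the proof of $(i)\Rightarrow(iii)$ (Tonelli's theorem applies, all integrands being nonnegative) gives
\begin{equation*}
\int_{\bbR}|h(x)|^2\,dx=\int_{\bbE}|h(x)|^2\rho(x)\,dx,\qquad
\int_{\bbR}|h(x)|^2\,dm(x)=\int_{\bbE}|h(x)|^2\rho_i(x)\,dx .
\end{equation*}
By its definition \eqref{9aug4}, $\phi$ satisfies $|\phi(x)|^2\rho(x)=\rho_i(x)$ a.e.; in particular $|\phi|\le 1$, so $\phi$ is a bounded outer function, and $|\phi|^2$ is $\G$-invariant because $\rho(\g(x))\g'(x)=\rho(x)$ and $\rho_i(\g(x))\g'(x)=\rho_i(x)$, which is exactly what makes the character $\a_\phi$ well defined.

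First I would prove $\cH^2(\a)\subseteq\phi H^2(\a_\phi^{-1}\a)$. Given $f\in\cH^2(\a)$, set $g:=f/\phi$. Since $\phi$ is bounded and outer, $g$ is again in the Smirnov class, and $g(\g(z))=(\a_\phi^{-1}\a)(\g)\,g(z)$. Using the two unfolding identities and $\rho_i/|\phi|^2=\rho$,
\begin{equation*}
\int_{\bbR}|g(x)|^2\,dm(x)=\int_{\bbE}\frac{|f(x)|^2}{|\phi(x)|^2}\rho_i(x)\,dx=\int_{\bbE}|f(x)|^2\rho(x)\,dx=\|f\|_{\cH^2}^2<\infty,
\end{equation*}
and a Smirnov class function with $L^2(dm)$ boundary values belongs to $H^2$; hence $g\in H^2(\a_\phi^{-1}\a)$ and $f=\phi g$. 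For the reverse inclusion, take $g\in H^2(\a_\phi^{-1}\a)$ and set $f:=\phi g$; as a product of a bounded outer function and a Smirnov class function, $f$ is in the Smirnov class with $f(\g(z))=\a(\g)f(z)$, and
\begin{equation*}
\int_{\bbR}|f(x)|^2\,dx=\int_{\bbE}|\phi(x)|^2|g(x)|^2\rho(x)\,dx=\int_{\bbE}|g(x)|^2\rho_i(x)\,dx=\|g\|_{H^2}^2<\infty,
\end{equation*}
so $f\in\cH^2(\a)$. The two inclusions give $\cH^2(\a)=\phi H^2(\a_\phi^{-1}\a)$, and the same computations show $\|\phi g\|_{\cH^2}=\|g\|_{H^2}$, i.e.\ multiplication by $\phi$ is a unitary identification of the two spaces.

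All the individual steps are routine and I do not expect a genuine obstacle; the only place demanding care is the Smirnov-class bookkeeping used above — that the Smirnov class is closed under multiplication and contains all outer functions and all of $H^2$, so that $f/\phi=f\cdot(1/\phi)$ and $\phi g$ stay in the Smirnov class and the finiteness of their $L^2$ norms really does place them in the relevant Hardy space rather than in a larger Nevanlinna class — together with the standing fact that $\bbE$ may be chosen measurable and that $\rho,\rho_i$ are a.e.\ finite on it, which is precisely where (a) and (b) enter, through the Widom condition and through \eqref{9aug2}, \eqref{9aug3} of Lemma~\ref{29dec01}.
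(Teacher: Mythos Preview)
Your proof is correct and follows essentially the same route as the paper: divide by $\phi$ (respectively multiply by $\phi$), stay in the Smirnov class, unfold both norms over the fundamental set $\bbE$ using $\rho$ and $\rho_i$, use $|\phi|^2=\rho_i/\rho$, and invoke the Smirnov maximum principle. The paper is terser and only spells out one inclusion, but the argument is identical; your extra remark that the map is an isometry is a harmless bonus.
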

\begin{proof}
We first show that $\cH^2(\a)\subseteq\phi H^2(\a_{\phi}^{-1}\a)$.
If $f\in \cH^2(\a)$, then $h=f/\phi$ is of Smirnov class. Recall that $\bbE$ is the fundamental measurable set for the action of $\G$ on $\bbR$.
In view of \eqref{9aug4}, we have
 $$
 \int_\bbR |h|^2 \frac{dx}{1+x^2}=\int_{\bbE}|h(x)|^2\rho_{i}(x)dx=\int_{\bbE}|f(x)|^2\rho(x)dx=\int_\bbR |f|^2 dx.
 $$
 Then, by the Smirnov maximum principle, $h\in H^2$, and, therefore, $h\in H^2(\a_\phi^{-1}\a)$. The converse inclusion is proved the same way.
\end{proof}
\begin{corollary}
If (a) and (b) hold, then $\cH^2(\a)$ are non trivial for all $\a\in\G^*$.
\end{corollary}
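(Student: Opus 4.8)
The plan is to obtain the corollary as an immediate consequence of Proposition~\ref{pr25} and Lemma~\ref{29dec01}. Fix an arbitrary $\a\in\G^*$ and set $\b:=\a_\phi^{-1}\a\in\G^*$. Since (a) and (b) are in force, Lemma~\ref{29dec01} applies to the character $\b$ and tells us that $H^2(\b)\neq\{0\}$; pick $h\in H^2(\b)$ with $h\not\equiv 0$ (if $\b$ is the trivial character one may simply take $h\equiv 1$, which lies in $H^2$ because $\int_{\bbR}dm(x)<\infty$).

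Next I would quote Proposition~\ref{pr25}, which under hypotheses (a) and (b) gives the identity $\cH^2(\a)=\phi\,H^2(\a_\phi^{-1}\a)=\phi\,H^2(\b)$. Hence $\phi h\in\cH^2(\a)$; in fact the computation in the proof of Proposition~\ref{pr25} shows $\int_{\bbR}|\phi h|^2\,dx=\int_{\bbR}|h|^2\,dm(x)<\infty$, so $\phi h$ genuinely belongs to $\cH^2$. Finally, $\phi h$ is not the zero vector: $\phi$ is an outer function and therefore $\phi(x)\neq 0$ for almost every $x\in\bbR$, while $h\not\equiv 0$, so $\phi h\not\equiv 0$. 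Thus $\cH^2(\a)\neq\{0\}$, and since $\a$ was arbitrary the corollary follows.

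I do not expect any serious obstacle here: the whole content has already been carried by Lemma~\ref{29dec01} (under (a)+(b) the bound on $\log\rho$ together with Theorem~\ref{TP} forces all $H^2(\a)$ to be non trivial, i.e. the Widom condition holds) and by Proposition~\ref{pr25} (the change of density $|\phi|^2=\rho_i/\rho$ intertwines the $H^2$-scale with the $\cH^2$-scale while shifting the character by $\a_\phi$). The only point to keep straight is the bookkeeping of characters — one starts from $H^2(\a_\phi^{-1}\a)$ precisely so that multiplication by $\phi$, which carries the character $\a_\phi$, lands in $\cH^2(\a)$ — and the elementary fact that a nonzero outer function cannot annihilate a nonzero Smirnov-class function.
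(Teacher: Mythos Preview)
Your proposal is correct and follows exactly the paper's own route: the paper's proof is the single sentence ``This is a straightforward combination of Lemma~\ref{29dec01} and Proposition~\ref{pr25},'' and you have simply spelled out that combination in detail (picking a nonzero $h\in H^2(\a_\phi^{-1}\a)$ via the lemma, then transporting it to $\cH^2(\a)$ via $\phi$ using the proposition). The additional remarks you make about the norm identity and about $\phi h\not\equiv 0$ are accurate and harmless elaborations.
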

\begin{proof}
This is a straightforward combination of Lemma \ref{29dec01} and
Proposition \ref{pr25}.
\end{proof}
\begin{remark}
We point out that an analogue of the space $\cH^2(\a)$ still can be defined
 for Widom domains (see the definition below)
even if condition (b) is violated. Indeed, the density outer function (compare to \eqref{9aug4})
\begin{equation*}
|\Phi(\l)|^2=\frac{\t_{\l_*}'(\l)}{\t'(\l)},\quad \l\in E,
\end{equation*}
where $\t_{\l_*}$ and $\t$ are defined in \eqref{6aug1} and \eqref{11sep7},
is always well defined in Widom domains  due to Theorem D \cite{SY}. This suggests the following
\begin{definition}\label{defoct23} Let $\Omega$ be of Widom type. Let $\pi_1(\O)\simeq\G$ be the fundamental group of this domain.
For a character $\a\in\pi_{1}(\O)^*$
we say that a function $F$ belongs to $\cH^2_{\O}(\a)$ if it is a character-automorphic  multivalued function in the domain, i.e.,
$$
F(\tilde\g(\l))=\a(\tilde\g) F(\l), \quad \tilde \g\in\pi_1(\O),
$$
and $|F(\l)/\Phi(\l)|^2$ possesses a harmonic majorant in $\Omega$.
\end{definition}

\end{remark}

\section{Appendix: Carath\'eodory and Frostman theorems}
Theorems of Carath\'eodory and Frostmant that are used in the proofs of Pommerenke theorem
(Theorem \ref{Pommerenke})  and in the most important part (Theorem \ref{17oct02}) of our main theorem  depend on the following
theorem due to Carath\'eodory and Julia \cite{Car}, for a modern exposition see, e. g., \cite{BoKh} and further references there.
\begin{theorem}[Carath\'eodory--Julia, \cite{Car}]\label{03sep01}
Let function $w$ be analytic in the unit disk and bounded in modulus by $1$.
Let $t_0$ be a point on the unit circle. The following are equivalent:
\begin{eqnarray}
&&(1) \quad d_1:={\displaystyle\liminf_{z\to
t_0}\frac{1-|w(z)|^2}{1-|z|^2}}<\infty\quad (|z|<1, z {\text\ approaches\ } t_0
{\text\ in\ an\ arbitrary\ way });\nonumber \\
&&(2) \quad
d_2:={\displaystyle\lim_{z\to t_0}
\frac{1-|w(z)|^2}{1-|z|^2}}<\infty
\quad (z {\text\ approaches\ } t_0
{\text\ nontangentially });\nonumber\\
&&(3)\quad \mbox{Finite nontangential limits} \; \;  \nonumber\\
&&\qquad\qquad w(t_0):={\displaystyle\lim_{z\to
t_0}w(z)} \; \; \mbox{and} \; \;
d_3:={\displaystyle\lim_{z\to
t_0}\frac{1-w(z)\overline{w(t_0)}}{1-z\bar{t}_0}}\nonumber\\
&&\qquad\ \mbox{exist},\ |w(t_0)|=1.\nonumber\\
&&(4)\quad \mbox{Finite nontangential limits} \; \;  \nonumber\\
&&\qquad\qquad w(t_0):={\displaystyle\lim_{z\to
t_0}w(z)} \; \; \mbox{and} \; \;
w'(t_0)={\displaystyle\lim_{z\to
t_0}\frac{w(z)-w(t_0)}{z-t_0}}\nonumber\\
&&\qquad\ \mbox{exist}, |w(t_0)|=1.\ w'(t_0)\ \text{is called the angular derivative at}\ t_0.\nonumber\\
&&(5)\quad \mbox{Finite nontangential limits} \; \; \nonumber\\
&&\qquad\qquad w(t_0):={\displaystyle\lim_{z\to
t_0}w(z)} \; \; \mbox{and} \; \; w'_0:={\displaystyle\lim_{z\to
t_0}w^\prime(z)}\nonumber\\
&&\qquad\ \mbox{exist},\ |w(t_0)|=1.\nonumber\\
&&(6)\quad \mbox{There exist a constant $w_0$, $|w_0|=1$ and a constant $d\ge 0$
} \; \; \nonumber\\
&&\qquad \mbox{\ such that the boundary Schwarz-Pick inequality holds}\nonumber\\
&&\qquad\qquad
\left|
\frac{w(z)-w_0}{z-t_0}
\right|^2\le d\cdot \frac{1-|w(z)|^2}{1-|z|^2},\quad |z|<1;
\label{28oct01}\\
&&\qquad \mbox{inequality \eqref{28oct01} implies that the following nontangential limit} \; \; \nonumber\\
&&\qquad\qquad w(t_0):={\displaystyle\lim_{z\to
t_0}w(z)} \; \; \mbox{exists\ and} \; \; w(t_0)=w_0;\nonumber\\
&&\qquad \mbox{\ we denote the smallest constant $d$ that works for \eqref{28oct01}  by $d_4$.}\nonumber
\end{eqnarray}
When these conditions hold, we have $w'_0=w'(t_0)$ and
$$d_1=d_2=d_3=d_4=t_0\frac{w'(t_0)}{w(t_0)}=|w'(t_0)|.$$
This number is equal to $0$ if and only if $w$ is a unimodular constant.
\end{theorem}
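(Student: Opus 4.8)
The plan is to reduce the whole statement to one hard implication, $(1)\Rightarrow(6)$ (Julia's lemma), together with soft facts about functions of positive real part. Throughout I would assume $w$ is not a unimodular constant (otherwise $1-|w|^2\equiv0$ and every assertion holds trivially with all the quantities equal to $0$), so by the open mapping theorem $w(\bbD)\subseteq\bbD$; and by multiplying $z$ and $w$ by suitable unimodular constants I would normalise $t_0=1$, and — in any implication whose hypothesis has already produced a boundary value $w_0$ with $|w_0|=1$ — also $w_0=1$. The central objects are the Cayley transforms $u(z)=\frac{1+w(z)}{1-w(z)}$ and $v(z)=\frac{1+z}{1-z}$, both holomorphic with positive real part; I would write the Herglotz representation $u=ic+\int_{\partial\bbD}\frac{\zeta+z}{\zeta-z}\,d\mu(\zeta)$, note that the representing measure of $v$ is $\delta_1$, set $\alpha:=\mu(\{1\})\in[0,\infty)$, and record the elementary identities $1-|w|^2=|1-w|^2\,\Re u$ and $1-|z|^2=|1-z|^2\,\Re v$.

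First I would prove $(1)\Rightarrow(6)$. Pick $z_n\to1$ in $\bbD$ with $\tfrac{1-|w(z_n)|^2}{1-|z_n|^2}\to d_1$; then $|w(z_n)|\to1$, and after passing to a subsequence $w(z_n)\to w_0$ with $|w_0|=1$. The Schwarz--Pick inequality applied to the pair $z,z_n$ is the same as
$$
\frac{1-|w(z)|^2}{|1-\overline{w(z_n)}\,w(z)|^2}\ \ge\ \frac{1-|z_n|^2}{1-|w(z_n)|^2}\cdot\frac{1-|z|^2}{|1-\overline{z_n}z|^2};
$$
letting $n\to\infty$ and using $|1-\overline{w_0}w(z)|=|w_0-w(z)|$ on $\partial\bbD$, this becomes exactly \eqref{28oct01} with $d=d_1$. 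Hence the nontangential limit $w(1)=w_0$ exists and $d_4\le d_1$. Now the key algebraic point: in \eqref{28oct01} the factor $|1-w|^2/|1-z|^2$ cancels on both sides, so after normalising $w_0=1$, \eqref{28oct01} is \emph{equivalent} to $\Re v\le d\,\Re u$ on $\bbD$, i.e.\ to the assertion that $du-v$ has nonnegative real part. By uniqueness of the Herglotz representation its representing measure equals $d\mu-\delta_1$, which must therefore be $\ge0$; in particular $d\alpha\ge1$, so $\alpha>0$ and the smallest admissible $d$ is $d_4=1/\alpha$.

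Next I would extract the boundary asymptotics. Splitting off the atom, $\mu=\alpha\delta_1+\mu_1$ with $\mu_1(\{1\})=0$, and using the Stolz-angle estimate $|1-z|\le C(1-|z|)$ together with dominated convergence, one gets, as $z\to1$ nontangentially,
$$
(1-z)u(z)\to2\alpha,\qquad (1-z)^2u'(z)\to2\alpha,\qquad \frac{\Re u(z)}{\Re v(z)}\to\alpha .
$$
From $1-w=2/(u+1)$ and $w'=2u'/(u+1)^2$ these give $\tfrac{1-w(z)}{1-z}\to1/\alpha$ and $w'(z)\to1/\alpha$ nontangentially; hence the angular derivative exists with $w'(1)=w'_0=1/\alpha>0$, which is $(3),(4),(5)$ and $w'_0=w'(t_0)$. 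Finally $\tfrac{1-|w|^2}{1-|z|^2}=\big|\tfrac{1-w}{1-z}\big|^2\tfrac{\Re u}{\Re v}\to1/\alpha$ gives $d_2=1/\alpha$, and $d_3=\lim\tfrac{1-w(z)\overline{w(1)}}{1-z}=1/\alpha$; since $d_1\le d_2$ always while $d_4\le d_1$ from Step~1, all four $d_i$ coincide with $1/\alpha=|w'(t_0)|=t_0w'(t_0)/w(t_0)$ in the normalised coordinates. To close the cycle: $(2)\Rightarrow(1)$ is trivial; for $(3)\Rightarrow(1)$ and $(4)\Rightarrow(1)$, finiteness of $d_3$ (resp.\ of $w'(1)$) forces $\alpha>0$, since $\tfrac{1-w(z)}{1-z}=\tfrac{2}{(1-z)u(z)+(1-z)}$ and $(1-z)u(z)\to2\alpha$, and then $d=1/\alpha$ makes $d\mu-\delta_1\ge0$, i.e.\ \eqref{28oct01}, i.e.\ (6); and $(5)\Rightarrow(4)$ follows by integrating $w'$ along a segment inside a Stolz angle (a Lindel\"of-type argument). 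The vanishing case is precisely the excluded one, $\alpha=+\infty$, $w$ a unimodular constant.

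I expect the one genuinely delicate step to be $(1)\Rightarrow(6)$: the hypothesis supplies only a sequence realising the \emph{unrestricted} $\liminf$, so one must first produce a boundary value $w_0$ along a subsequence and then pass to the limit in the Schwarz--Pick inequality, the crux being the identity $|1-\overline{w_0}w|=|w_0-w|$ on $\partial\bbD$ that converts the pseudohyperbolic estimate into the horodisc (boundary Schwarz--Pick) form \eqref{28oct01}. Everything afterwards is routine: the reformulation of \eqref{28oct01} as positivity of $du-v$, uniqueness of Herglotz measures, and the behaviour of a function of positive real part near an atom of its representing measure (the Stolz-angle and dominated-convergence estimates).
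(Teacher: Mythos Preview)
The paper does not actually prove Theorem~\ref{03sep01}; it is quoted from Carath\'eodory's book \cite{Car} (with a pointer to \cite{BoKh} for a modern treatment) and then used as a black box in the proofs of Theorems~\ref{03sep08} and~\ref{19oct01}. So there is no in-paper argument to compare against.

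Your proof is correct and is in fact one of the standard routes to the Carath\'eodory--Julia theorem: Julia's inequality $(1)\Rightarrow(6)$ via a limit in the two-point Schwarz--Pick inequality, followed by the Cayley transform to a function of positive real part and analysis of the atom of its Herglotz measure at $t_0$. The key identifications $d_4=1/\alpha$ (from the equivalence of \eqref{28oct01} with $\Re v\le d\,\Re u$ and positivity of $d\mu-\delta_1$) and the nontangential asymptotics $(1-z)u(z)\to 2\alpha$, $(1-z)^2u'(z)\to 2\alpha$, $\Re u/\Re v\to\alpha$ are all correctly derived, and the cycle closes as you describe. Two cosmetic remarks: the identity $|1-\overline{w_0}w(z)|=|w_0-w(z)|$ holds for all $z$ once $|w_0|=1$, not just ``on $\partial\bbD$''; and in the final sentence the unimodular-constant case corresponds to $u\equiv\infty$ (Cayley transform undefined) rather than to ``$\alpha=+\infty$'', but since you excluded that case at the outset and handled it separately, the logic is unaffected.
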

\begin{theorem}[Carath\'eodory, \cite{Car}] \label{03sep08}
Let $w$, $w_n$ be analytic functions bounded in modulus by $1$ on the unit disk.
Assume that $w_n(z)$ converges to $w(z)$ for every $|z|<1$.
Let $|t_0|=1$.
Assume that nontangential boundary values $w_n(t_0)$,
$w'_n(t_0)$ exist and that $|w_n(t_0)|=1$, $w'_n(t_0)$
are finite.
We assume that $$\underline{\lim}\ |w'_n(t_0)|<\infty.$$
Then
the nontangential boundary values $w(t_0)$, $w'(t_0)$
exist, $|w(t_0)|=1$ and
\begin{equation}\label{28oct03}
|w'(t_0)|\le\underline{\lim}\ |w'_n(t_0)|.
\end{equation}
\end{theorem}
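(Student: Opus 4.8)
The plan is to transfer the boundary Schwarz--Pick inequality of Theorem \ref{03sep01} from the functions $w_n$ to $w$ by a routine limiting argument, and then to invoke that same theorem once more, now applied to $w$, in order to extract the angular derivative. First I would note that, being a pointwise limit of a uniformly bounded family of analytic functions, $w$ is itself analytic on the disk with $|w|\le 1$ (in fact the convergence is locally uniform, by Montel's theorem together with Vitali's theorem), so that all parts of Theorem \ref{03sep01} are applicable to $w$.

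Set $L:=\underline{\lim}\,|w'_n(t_0)|$, which is finite by hypothesis and nonnegative. Passing to a subsequence I may assume $|w'_n(t_0)|\to L$, and then, since every $w_n(t_0)$ lies on the unit circle, passing to a further subsequence I may also assume $w_n(t_0)\to w_0$ with $|w_0|=1$. For each $n$, the existence of a finite angular derivative $w'_n(t_0)$ together with $|w_n(t_0)|=1$ is condition $(4)$ of Theorem \ref{03sep01}, so condition $(6)$ holds for $w_n$ with smallest admissible constant $d_4=|w'_n(t_0)|$; that is,
\begin{equation*}
\left|\frac{w_n(z)-w_n(t_0)}{z-t_0}\right|^2\le |w'_n(t_0)|\,\frac{1-|w_n(z)|^2}{1-|z|^2},\qquad |z|<1.
\end{equation*}

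Next I would fix $z$ with $|z|<1$ and let $n\to\infty$ along the chosen subsequence. Using $w_n(z)\to w(z)$ (hence $|w_n(z)|^2\to|w(z)|^2$), $w_n(t_0)\to w_0$ and $|w'_n(t_0)|\to L$, the inequality passes to the limit and yields
\begin{equation*}
\left|\frac{w(z)-w_0}{z-t_0}\right|^2\le L\,\frac{1-|w(z)|^2}{1-|z|^2},\qquad |z|<1,
\end{equation*}
which is exactly inequality \eqref{28oct01} for $w$, with the admissible constant $d=L$ and boundary point $w_0$. By Theorem \ref{03sep01}$(6)$ this forces the nontangential limit $w(t_0)=\lim_{z\to t_0}w(z)$ to exist and to equal $w_0$, with $|w(t_0)|=1$, and it identifies the smallest admissible constant as $d_4=|w'(t_0)|$; since $d=L$ is admissible, $|w'(t_0)|\le L=\underline{\lim}\,|w'_n(t_0)|$, which is \eqref{28oct03}.

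I do not expect a genuine obstacle here. The only point requiring care is the two-stage extraction of subsequences (first to realize the liminf of the derivatives, then to pin down the boundary values on the circle), after which the resulting estimate is automatically against the liminf over the full original sequence. The real content of the statement is carried entirely by Theorem \ref{03sep01}$(6)$, which manufactures the angular derivative of $w$ out of the single inequality that survives the limit; everything else is elementary.
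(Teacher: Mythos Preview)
Your proof is correct and follows essentially the same route as the paper: extract a subsequence realizing the liminf of $|w'_n(t_0)|$, then a further subsequence along which $w_n(t_0)$ converges on the unit circle, pass to the limit in the boundary Schwarz--Pick inequality \eqref{28oct01}, and read off the conclusion from Theorem~\ref{03sep01}(6). The only addition you make is the remark that $w$ is itself analytic and bounded by~$1$, which the paper leaves implicit.
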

\begin{proof}
Let $|w'_{n_k}(t_0)|$ converge to $\underline{\lim}\ |w'_n(t_0)|$.
By \eqref{28oct01}, we have
$$
\left|
\frac{w_{n_k}(z)-w_{n_k}(t_0)}{z-t_0}
\right|^2\le |w'_{n_k}(t_0)|\cdot \frac{1-|w_{n_k}(z)|^2}{1-|z|^2},\quad |z|<1.
$$
$w_{n_k}(t_0)$ is a sequence of complex numbers of modulus one. Therefore, there exists a convergent subsequence $w_{n_{k_j}}(t_0)$. We denote the limit by $w_0$, $|w_0|=1$. Since $w_{n_{k_j}}(z)$ converge to $w(z)$ for every $|z|<1$, we get (by passing to the limit as $j\to\infty$)
$$
\left|
\frac{w(z)-w_0}{z-t_0}
\right|^2\le \underline{\lim}\ |w'_{n}(t_0)|\cdot \frac{1-|w(z)|^2}{1-|z|^2},\quad |z|<1.
$$
From here we see that $w_0=w(t_0)$
and we get
$$
\left|
\frac{w(z)-w(t_0)}{z-t_0}
\right|^2\le \underline{\lim}\ |w'_{n}(t_0)|\cdot \frac{1-|w(z)|^2}{1-|z|^2},\quad |z|<1.
$$
By Theorem \ref{03sep01}, the latter inequality implies that $w'(t_0)$ exists and that it is finite.
Since the smallest constant that works for this inequality is $|w'(t_0)|$, \eqref{28oct03} follows.

\end{proof}
\begin{theorem}[Frostman, \cite{Fro}]\label{19oct01}
In addition to assumptions of Theorem \ref{03sep08},
assume that $|w_n(z)|\ge |w(z)|$ for every $z$, $|z|<1$.
Then
$$
w(t_0)=\lim w_n(t_0)\quad \text{and}\quad w'(t_0)=\lim w'_n(t_0).
$$
\end{theorem}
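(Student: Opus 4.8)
The plan is to upgrade the one-sided estimate $|w'(t_0)|\le\underline{\lim}\ |w_n'(t_0)|$ supplied by Theorem~\ref{03sep08} into a genuine limit, and, separately, to deduce the convergence of the boundary values $w_n(t_0)\to w(t_0)$; the additional hypothesis $|w_n(z)|\ge|w(z)|$ will enter exactly once, but decisively. Throughout I would work along the radius $z=rt_0$, $r\to1^-$, which is a nontangential approach to $t_0$, so that all the equivalent formulations of Theorem~\ref{03sep01} are at my disposal both for $w$ — by Theorem~\ref{03sep08} the limit $w(t_0)$ exists with $|w(t_0)|=1$ and $w'(t_0)$ is finite — and for each $w_n$, whose angular derivative at $t_0$ is assumed finite. (The degenerate case in which $w$ is a unimodular constant, which then forces every $w_n$ to be one, is trivial and may be set aside at once.)

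First, by Theorem~\ref{03sep01} applied to $w$ and to each $w_n$ at $t_0$, I would record that the radial limits
\begin{equation*}
\lim_{r\to1^-}\frac{1-|w(rt_0)|^2}{1-r^2}=|w'(t_0)|
\qquad\text{and}\qquad
\lim_{r\to1^-}\frac{1-|w_n(rt_0)|^2}{1-r^2}=|w_n'(t_0)|
\end{equation*}
exist, with the displayed values. The key step is then immediate: for every $r\in(0,1)$ the inequality $|w_n(rt_0)|\ge|w(rt_0)|$ gives $1-|w_n(rt_0)|^2\le 1-|w(rt_0)|^2$, hence $\frac{1-|w_n(rt_0)|^2}{1-r^2}\le\frac{1-|w(rt_0)|^2}{1-r^2}$, and letting $r\to1^-$ (both limits exist by the previous display) yields $|w_n'(t_0)|\le|w'(t_0)|$ for every $n$. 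Combined with $|w'(t_0)|\le\underline{\lim}\ |w_n'(t_0)|$ from Theorem~\ref{03sep08}, this gives $\lim_{n\to\infty}|w_n'(t_0)|=|w'(t_0)|$.

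Next I would pass to the boundary values. Applying the boundary Schwarz--Pick inequality \eqref{28oct01} to $w_n$ at $z=rt_0$ (with $w_0=w_n(t_0)$ and $d=|w_n'(t_0)|$), and then using both $|w_n'(t_0)|\le|w'(t_0)|$ and $|w_n(rt_0)|\ge|w(rt_0)|$, I would obtain
\begin{equation*}
|w_n(rt_0)-w_n(t_0)|^2\le|w_n'(t_0)|\,\frac{1-r}{1+r}\bigl(1-|w_n(rt_0)|^2\bigr)\le|w'(t_0)|\,\frac{1-r}{1+r}\bigl(1-|w(rt_0)|^2\bigr).
\end{equation*}
The right-hand side does not depend on $n$ and tends to $0$ as $r\to1^-$, since $1-|w(rt_0)|^2\to 1-|w(t_0)|^2=0$; hence $w_n(rt_0)-w_n(t_0)\to 0$ as $r\to1^-$, uniformly in $n$. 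Since for each fixed $r<1$ one has $w_n(rt_0)\to w(rt_0)$ by hypothesis and $w(rt_0)\to w(t_0)$ as $r\to1^-$, an $\epsilon/3$-argument — first choose $r$ close to $1$, then $n$ large — gives $w_n(t_0)\to w(t_0)$. Finally, Theorem~\ref{03sep01} provides the identity $t_0\,w'(t_0)/w(t_0)=|w'(t_0)|$, i.e. $w'(t_0)=\overline{t_0}\,w(t_0)\,|w'(t_0)|$ (recall $|t_0|=1$), valid for $w$ and for every $w_n$; combining it with the two convergences just established yields $w_n'(t_0)=\overline{t_0}\,w_n(t_0)\,|w_n'(t_0)|\to\overline{t_0}\,w(t_0)\,|w'(t_0)|=w'(t_0)$, as required.

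I expect the main obstacle to be the inequality $|w_n'(t_0)|\le|w'(t_0)|$ of the second paragraph. Carath\'eodory's Theorem~\ref{03sep08} controls $|w'(t_0)|$ only from below by the angular derivatives of the approximants, and it is precisely Frostman's hypothesis $|w_n|\ge|w|$ — via the radial characterization of the angular derivative — that supplies the matching upper bound; without a uniform-in-$n$ bound on $|w_n'(t_0)|$ the estimate of the third paragraph, and hence the convergence $w_n(t_0)\to w(t_0)$, would be unavailable.
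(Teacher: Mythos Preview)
Your proof is correct and follows the same skeleton as the paper's: the inequality $|w_n'(t_0)|\le|w'(t_0)|$ is obtained exactly as in the paper from $|w_n|\ge|w|$ via the radial quotient $\dfrac{1-|\cdot|^2}{1-|z|^2}$, then sandwiched against the Carath\'eodory lower bound from Theorem~\ref{03sep08} to get $\lim|w_n'(t_0)|=|w'(t_0)|$, and the passage from $|w_n'(t_0)|\to|w'(t_0)|$ and $w_n(t_0)\to w(t_0)$ to $w_n'(t_0)\to w'(t_0)$ uses the same identity $t_0\,w'(t_0)/w(t_0)=|w'(t_0)|$.

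The one genuine, if minor, divergence is in how you establish $w_n(t_0)\to w(t_0)$. The paper argues by revisiting the proof of Theorem~\ref{03sep08}: since now the full sequence $|w_n'(t_0)|$ converges, no initial subsequence is needed there, so every subsequential limit of the (unit-modulus, hence precompact) sequence $w_n(t_0)$ must equal $w(t_0)$. You instead give a self-contained $\epsilon/3$ argument, using the boundary Schwarz--Pick inequality \eqref{28oct01} together with the uniform-in-$n$ bound $|w_n'(t_0)|\le|w'(t_0)|$ and $1-|w_n|^2\le1-|w|^2$ to force $w_n(rt_0)-w_n(t_0)\to0$ uniformly in $n$. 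Your route is slightly longer but more explicit and does not require the reader to re-open the proof of Theorem~\ref{03sep08}; the paper's route is shorter but leans on that earlier proof.
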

\begin{proof}
By assumption,
$$
\frac{1-|w_n(z)|^2}{1-|z|^2}\le \frac{1-|w(z)|^2}{1-|z|^2}.
$$
Therefore,
$$
\lim_{z\to t_0} \frac{1-|w_n(z)|^2}{1-|z|^2}\le
\lim_{z\to t_0} \frac{1-|w(z)|^2}{1-|z|^2}.
$$
That is, in view of Theorem \ref{03sep01},
$$
|w'_n(t_0)|\le |w'(t_0)|.
$$
Hence, we get
$$
\overline{\lim} |w'_n(t_0)|\le |w'(t_0)|.
$$
Combining this with Theorem \ref{03sep08}, we get
$$
|w'(t_0)|=\lim |w'_n(t_0)|.
$$
The first assertion of the theorem follows from the observation that now one does not need to choose a subsequence at the beginning of the proof of Theorem \ref{03sep08}.
This implies that every subsequential limit of $w_n(t_0)$ is $w(t_0)$. After that, the second assertion is a consequence of the relation
$$
|w'(t_0)|=t_0\frac{w'(t_0)}{w(t_0)}.
$$
\end{proof}
By a simple  substitution
$$
z:=\frac{z-i}{z+i},
$$
that maps upper half plane onto the unit disk,
Theorems \ref{03sep01} and \ref{19oct01} can be restated for functions on the upper half plane.
\begin{theorem}[Carath\'eodory--Julia]\label{21oct06}
Let $w$ be analytic on the upper half plane and bounded in modulus by $1$.
Let $x\in\mathbb R$ be a point on the real axis. The following are equivalent:
\begin{eqnarray}
&&(1) \quad d_1:={\displaystyle\liminf_{z\to
x}\frac{1-|w(z)|^2}{2\Im z}}<\infty\quad (\Im z>0,\ z {\text\ approaches\ } x
{\text\ in\ an\ arbitrary\ way }); \nonumber\\
&&(2) \quad
d_2:={\displaystyle\lim_{z\to x}
\frac{1-|w(z)|^2}{2\Im z}}<\infty
\quad (\Im z>0,\ z {\text\ approaches\ } x
{\text\ nontangentially });\nonumber\\
&&(3)\quad \mbox{Finite nontangential limits} \; \;  \nonumber\\
&&\qquad\qquad w(x):={\displaystyle\lim_{z\to
x}w(z)} \; \; \mbox{and} \; \;
d_3:={\displaystyle\lim_{z\to
x}\frac{1-w(z)\overline{w(x)}}{i(x-z)}}\nonumber\\
&&\qquad\ \mbox{exist}, |w(x)|=1.\nonumber\\
&&(4)\quad \mbox{Finite nontangential limits} \; \;  \nonumber\\
&&\qquad\qquad w(x):={\displaystyle\lim_{z\to
x}w(z)} \; \; \mbox{and} \; \;
w'(x)={\displaystyle\lim_{z\to
x}\frac{w(z)-w(x)}{z-x}}\nonumber\\
&&\qquad\ \mbox{exist}, |w(x)|=1.\nonumber\\
&&(5)\quad \mbox{Finite nontangential limits} \; \; \nonumber\\
&&\qquad\qquad w(x):={\displaystyle\lim_{z\to
x}w(z)} \; \; \mbox{and} \; \; w'_0:={\displaystyle\lim_{z\to
x}w^\prime(z)}\nonumber\\
&&\qquad\ \mbox{exist}, |w(x)|=1.
w'(x)\ \text{is caled the angular derivative at}\ x.\nonumber\\
&&(6)\quad \mbox{There exist a constant $w_0$, $|w_0|=1$ and a constant $d\ge 0$
} \; \; \nonumber\nonumber\\
&&\qquad \mbox{\ such that the boundary Schwarz-Pick inequality holds}\nonumber\nonumber\\
&&\qquad\qquad
\left|
\frac{w(z)-w_0}{z-x}
\right|^2\le d\cdot \frac{1-|w(z)|^2}{2\Im z},\quad \Im z>0;
\label{28oct04}\\
&&\qquad \mbox{inequality \eqref{28oct04} implies that the following nontangential limit} \; \; \nonumber\nonumber\\
&&\qquad\qquad w(x):={\displaystyle\lim_{z\to
x}w(z)} \; \; \mbox{exists\ and} \; \; w(x)=w_0;\nonumber\nonumber\\
&&\qquad \mbox{\ we denote the smallest constant that works for \eqref{28oct04}  by $d_4$.}\nonumber
\end{eqnarray}
When these conditions hold, we have $w'_0=w'(x)$ and
$$|w'(x)|=\frac{1}{i}\frac{w'(x)}{w(x)}=d_1=d_2=d_3=d_4.$$
\end{theorem}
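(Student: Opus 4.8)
The plan is to obtain Theorem~\ref{21oct06} from its disk version, Theorem~\ref{03sep01}, by transporting everything through the Cayley transform. Put $C(z)=\dfrac{z-i}{z+i}$, a conformal bijection of $\bbC_+$ onto $\bbD$ with inverse $C^{-1}(\zeta)=i\dfrac{1+\zeta}{1-\zeta}$; it extends to a homeomorphism carrying $\bbR$ onto the unit circle, so $t_0:=C(x)=\dfrac{x-i}{x+i}$ satisfies $|t_0|=1$. Given $w$ analytic and bounded in modulus by $1$ on $\bbC_+$, set $\widetilde w:=w\circ C^{-1}$, which is analytic and bounded by $1$ on $\bbD$. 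First I would note that, $C$ being a Möbius map conformal at the finite point $x$, nontangential approach of $z$ to $x$ in $\bbC_+$ corresponds exactly to nontangential approach of $\zeta=C(z)$ to $t_0$ in $\bbD$; hence every nontangential limit in the statement translates faithfully, and likewise for the arbitrary approach in condition (1).

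Second, I would record the elementary identities matching the six conditions. For $z\in\bbC_+$ and $\zeta=C(z)$,
$$1-|\zeta|^2=\frac{|z+i|^2-|z-i|^2}{|z+i|^2}=\frac{4\Im z}{|z+i|^2},\qquad |\widetilde w(\zeta)|=|w(z)|,$$
so $\dfrac{1-|\widetilde w(\zeta)|^2}{1-|\zeta|^2}=\dfrac{|z+i|^2}{2}\cdot\dfrac{1-|w(z)|^2}{2\Im z}$. Since $z\to x$ forces $|z+i|^2\to x^2+1\in(0,\infty)$, finiteness of $d_1$ (resp.\ $d_2$) for $w$ at $x$ is equivalent to finiteness of the corresponding disk quantity for $\widetilde w$ at $t_0$, the limits differing only by the nonzero factor $\tfrac12(x^2+1)$. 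For conditions (3)--(5) I would use $|w(x)|=|\widetilde w(t_0)|$ together with the chain rule $\widetilde w'(\zeta)=w'(C^{-1}(\zeta))\,(C^{-1})'(\zeta)$ and $(C^{-1})'(\zeta)=\dfrac{2i}{(1-\zeta)^2}$, which at $\zeta=t_0$ equals $-\tfrac{i}{2}(x+i)^2$, finite and nonzero; thus existence and finiteness of the angular derivative and of $\lim w'$ are preserved in both directions. For (6) I would substitute $z=C^{-1}(\zeta)$ into the boundary Schwarz--Pick inequality \eqref{28oct01} and use $z-x=\dfrac{2i(\zeta-t_0)}{(1-\zeta)(1-t_0)}$ to turn it into \eqref{28oct04}, with $w_0$ unchanged and the smallest admissible constant multiplied by the same nonzero scalar.

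Finally, with all six conditions translated, Theorem~\ref{03sep01} applied to $\widetilde w$ at $t_0$ gives $\widetilde w'_0=\widetilde w'(t_0)$ and $\widetilde d_1=\widetilde d_2=\widetilde d_3=\widetilde d_4=|\widetilde w'(t_0)|$. Unwinding through $C$ we get $w'_0=w'(x)$; and since the limit defining $d_1$ for $w$ at $x$ equals $\tfrac{2}{x^2+1}\,|\widetilde w'(t_0)|=\tfrac{2}{x^2+1}\cdot\tfrac{x^2+1}{2}\,|w'(x)|=|w'(x)|$ — the denominator $2\Im z$, rather than $\tfrac{4\Im z}{|z+i|^2}$, being precisely the normalization that produces this — the same value is forced on $d_2,d_3,d_4$; the relation $|w'(x)|=\tfrac1i\dfrac{w'(x)}{w(x)}$ is the image of $\widetilde d_3=t_0\dfrac{\widetilde w'(t_0)}{\widetilde w(t_0)}$ under the transform. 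I expect no genuine analytic obstacle: the only point demanding care is the bookkeeping of the several finite, nonvanishing Möbius prefactors, so that finite matches finite in each of (1)--(6) and so that the precise constants $d_1=d_2=d_3=d_4=|w'(x)|$ come out exactly right rather than off by a scalar.
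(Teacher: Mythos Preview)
Your proposal is correct and follows exactly the approach the paper indicates: the paper states only that Theorem~\ref{21oct06} is obtained from Theorem~\ref{03sep01} ``by a simple substitution $z:=\frac{z-i}{z+i}$'' and gives no further details, so your write-up in fact supplies the bookkeeping the paper omits.
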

The next theorem is a version of Theorem \ref{19oct01} for the upper half plane.
\begin{theorem}[Frostman, \cite{Fro}]\label{19oct11}
Let $w$, $w_n$ be analytic functions on the upper half plane bounded in modulus by $1$.
Assume that $w_n$ converge to $w$ for every $z\in\mathbb C_+$ and that
$|w_n(z)|\ge |w(z)|$ for every $z\in\mathbb C_+$.
Let $x\in\mathbb R$.
Let $w_n(x)$, and $w'_n(x)$ be the nontangential boundary values,
$|w_n(x)|=1$, $w'_n(x)$ is finite.
Assume that $$\underline{\lim}\ |w'_n(x)|<\infty.$$
Then nontangential boundary values $w(x)$ and $w'(x)$ exist,
$|w(x)|=1$, $w'(x)$ is finite and
$$
w(x)=\lim w_n(x),\quad w'(x)=\lim w'_n(x).
$$
\end{theorem}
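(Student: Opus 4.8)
The plan is to reduce the assertion to its disk counterpart, Theorem~\ref{19oct01}, by the very substitution already used in the excerpt to pass from Theorem~\ref{03sep01} to Theorem~\ref{21oct06}. Put
\[
\f(z)=\frac{z-i}{z+i},\qquad \psi(\z)=i\,\frac{1+\z}{1-\z},
\]
so that $\f:\mathbb C_+\to\bbD$ is a conformal bijection with inverse $\psi$, and set $t_0=\f(x)$. Since $x\in\bbR$ is finite, $t_0\in\bbT$ and $t_0\ne 1$; in particular $\f$ is analytic near $x$ with $\f'(x)\ne 0$, equivalently $\psi$ is analytic near $t_0$ with $\psi'(t_0)$ finite and nonzero. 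Define $\widetilde w(\z)=w(\psi(\z))$ and $\widetilde w_n(\z)=w_n(\psi(\z))$, which are analytic on $\bbD$ and bounded in modulus by $1$.

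First I would check that $\widetilde w,\widetilde w_n$ meet the hypotheses of Theorem~\ref{19oct01} at $t_0$. Pointwise convergence $\widetilde w_n\to\widetilde w$ on $\bbD$ and the domination $|\widetilde w_n|\ge|\widetilde w|$ on $\bbD$ are immediate from the corresponding properties of $w_n,w$ on $\mathbb C_+$, because $\psi$ maps $\bbD$ onto $\mathbb C_+$. As $\f$ is conformal at $x$ with nonzero derivative, it carries Stolz angles at $x$ to Stolz angles at $t_0$ and back; hence nontangential approach $z\to x$ in $\mathbb C_+$ corresponds to nontangential approach $\z\to t_0$ in $\bbD$. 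Consequently the nontangential value $\widetilde w_n(t_0)$ exists and equals $w_n(x)$, so $|\widetilde w_n(t_0)|=1$, and by the chain rule the angular derivative exists with $\widetilde w_n'(t_0)=w_n'(x)\,\psi'(t_0)$, which is finite. Since $\psi'(t_0)$ is a fixed nonzero constant, $\underline{\lim}\,|\widetilde w_n'(t_0)|=|\psi'(t_0)|\cdot\underline{\lim}\,|w_n'(x)|<\infty$.

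Then I would invoke Theorem~\ref{19oct01}: the nontangential limits $\widetilde w(t_0)$, $\widetilde w'(t_0)$ exist, $|\widetilde w(t_0)|=1$, $\widetilde w'(t_0)$ is finite, and
\[
\widetilde w(t_0)=\lim_n\widetilde w_n(t_0),\qquad \widetilde w'(t_0)=\lim_n\widetilde w_n'(t_0).
\]
Pulling back through $z=\psi(\z)$, the nontangential limit $w(x)=\widetilde w(t_0)$ exists, $|w(x)|=1$, and $w'(x)=\widetilde w'(t_0)/\psi'(t_0)$ exists and is finite; the two displayed equalities turn into $w(x)=\lim_n w_n(x)$ and, after dividing by the constant $\psi'(t_0)$, $w'(x)=\lim_n w_n'(x)$. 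That is the claim.

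The one point that genuinely needs justification — and the step I expect to be the main (if modest) obstacle — is the behaviour of nontangential limits and of the angular derivative under the Cayley map: one must note that a conformal map with finite nonzero derivative at a boundary point sends nontangential approach regions to nontangential approach regions, and that the classical angular derivative obeys the ordinary chain rule. Everything else is bookkeeping, and no analytic input is needed beyond the already established disk version (Theorem~\ref{19oct01}).
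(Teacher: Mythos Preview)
Your proposal is correct and is exactly the paper's own argument: the paper does not give a separate proof of Theorem~\ref{19oct11} but simply states that it follows from the disk version, Theorem~\ref{19oct01}, ``by a simple substitution $z:=\frac{z-i}{z+i}$.'' Your write-up spells out the transfer carefully, including the point about nontangential regions and the chain rule for the angular derivative, which the paper leaves implicit.
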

\begin{corollary}[Frostman, \cite{Fro}]\label{19oct09}
Let $w$ be a Blaschke product on the upper half plane
$$
w(z)=\prod\limits_k B_k(z).
$$
Let $x\in\mathbb R$.
Then $w(x)$ and $w'(x)$ exist with $|w(x)|=1$, $w'(x)$ finite
if and only if
$$
\sum\limits_k |B'_k(x)|<\infty.
$$
In this case
$$
|w'(x)|=\sum\limits_k |B'_k(x)|.
$$
\end{corollary}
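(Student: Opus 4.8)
The plan is to derive the corollary from the upper half-plane Frostman theorem (Theorem~\ref{19oct11}) and the Carath\'eodory--Julia characterization (Theorem~\ref{21oct06}), taking the partial products of $w$ as the approximating sequence. First I would set $w_n(z)=\prod_{k=1}^n B_k(z)$. Each $w_n$ is a finite Blaschke product, hence a rational function with all zeros in $\mathbb C_+$ and all poles in $\mathbb C_-$; in particular it is analytic in a neighbourhood of every point of $\mathbb R$, with $|w_n(x)|=1$ and $w_n'(x)$ finite for $x\in\mathbb R$. Moreover $w_n(z)\to w(z)$ for every $z\in\mathbb C_+$, and since each $|B_k|\le 1$ we have $|w_n(z)|\ge|w(z)|$ on $\mathbb C_+$. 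Thus $w$ and $w_n$ fit the hypotheses of Theorems~\ref{19oct11} and~\ref{21oct06}.

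The one computation to carry out is the monotone identity
\begin{equation*}
|w_n'(x)|=\sum_{k=1}^n|B_k'(x)|,\qquad x\in\mathbb R.
\end{equation*}
It follows from term-by-term logarithmic differentiation, $\dfrac{w_n'}{w_n}=\sum_{k=1}^n\dfrac{B_k'}{B_k}$, together with the last relation in Theorem~\ref{21oct06} applied to $w_n$ and to each single factor $B_k$ (all finite Blaschke products with unimodular boundary value at $x$):
\begin{equation*}
|w_n'(x)|=\frac1i\,\frac{w_n'(x)}{w_n(x)}=\sum_{k=1}^n\frac1i\,\frac{B_k'(x)}{B_k(x)}=\sum_{k=1}^n|B_k'(x)|.
\end{equation*}
In particular $|w_n'(x)|$ increases to $\sum_k|B_k'(x)|\in[0,\infty]$.

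For the ``if'' direction, assuming $\sum_k|B_k'(x)|<\infty$ I would note $\liminf_n|w_n'(x)|=\sum_k|B_k'(x)|<\infty$, so Theorem~\ref{19oct11} gives that $w(x)$ and $w'(x)$ exist nontangentially with $|w(x)|=1$, $w'(x)$ finite, and $w'(x)=\lim_n w_n'(x)$; hence $|w'(x)|=\lim_n|w_n'(x)|=\sum_k|B_k'(x)|$. For the ``only if'' direction, assume $w(x),w'(x)$ exist nontangentially with $|w(x)|=1$ and $w'(x)$ finite. By the equivalence of conditions $(4)$ and $(2)$ in Theorem~\ref{21oct06}, $\lim_{z\to x}\frac{1-|w(z)|^2}{2\Im z}=|w'(x)|<\infty$. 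Since $|w_n(z)|\ge|w(z)|$ we have $\frac{1-|w_n(z)|^2}{2\Im z}\le\frac{1-|w(z)|^2}{2\Im z}$ on $\mathbb C_+$, so $\liminf_{z\to x}\frac{1-|w_n(z)|^2}{2\Im z}\le|w'(x)|$; but for the finite Blaschke product $w_n$ all conditions of Theorem~\ref{21oct06} hold, and the quantity on the left is exactly $|w_n'(x)|$. Therefore $\sum_{k=1}^n|B_k'(x)|=|w_n'(x)|\le|w'(x)|$ for all $n$, whence $\sum_k|B_k'(x)|\le|w'(x)|<\infty$; combined with the first direction this also re-proves $|w'(x)|=\sum_k|B_k'(x)|$.

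I do not expect a genuine obstacle here: the content is essentially bookkeeping on top of the two quoted theorems. The only points that need a little care are verifying that the partial products really are finite Blaschke products analytic across $\mathbb R$ with $|w_n(x)|=1$ at the prescribed real $x$ (true because each zero $z_k$ lies in the \emph{open} upper half-plane), and the legitimacy of the term-by-term logarithmic differentiation; once the monotone identity $|w_n'(x)|=\sum_{k\le n}|B_k'(x)|$ is established, the corollary is immediate.
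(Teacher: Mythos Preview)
Your proposal is correct and follows essentially the same route as the paper: partial products $w_n$, the identity $|w_n'(x)|=\sum_{k\le n}|B_k'(x)|$ via logarithmic differentiation and Theorem~\ref{21oct06}, then Theorem~\ref{19oct11} for both directions. The paper's ``only if'' step is phrased as ``like in Theorem~\ref{19oct01}'' rather than spelling out the comparison $\frac{1-|w_n|^2}{2\Im z}\le\frac{1-|w|^2}{2\Im z}$ as you do, but the content is identical.
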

\begin{proof}
Let $w_n$ be a finite Blaschke product
$$
w_n(z)=\prod\limits_{k=1}^n B_k(z).
$$
Then
$$
\frac{w'_n(z)}{w_n(z)}=\sum\limits_{k=1}^n \frac{B'_k(z)}{B_k(z)}.
$$
Observe that (compare to Theorem \ref{21oct06})
$$
|B_k(x)|=1,\ \text{and}\ \ \frac{1}{i}\cdot\frac{B'_k(x)}{B_k(x)}= |B'_k(x)|.
$$
Same is true for $w_n$ at point $x$. Therefore, we get
$$
|w'_n(x)|=\sum\limits_{k=1}^n |B'_k(x)|.
$$
Since $w_n$ is a divisor of $w$ the following inequality holds
$$
|w_n(z)|\ge|w(z)|
$$
for every $z\in\mathbb C_+$. Also $w_n(z)$ converge to $w(z)$ for every $z\in\mathbb C_+$.
If $|w(x)|=1$, $w'(x)$ exists and it is finite,
then (like in Theorem \ref{19oct01})
$$
|w_n'(x)|\le|w'(x)|.
$$
Therefore, for every $n$
$$
\infty > |w'(x)|\ge |w'_n(x)|=\sum\limits_{k=1}^n |B'_k(x)|
$$
and
$$
\sum\limits_{k=1}^\infty |B'_k(x)|<\infty.
$$

Conversely, if the latter sum converges,
then $|w_n'(x)|$ are bounded and we are in the situation of Theorem \ref{19oct11}.
\end{proof}
\begin{corollary}\label{19oct10}
Let $m(z)$ be the Martin function with a pure point measure, defined as in \eqref{26sep04}
\begin{equation*}
m(z)=\sum\limits_{\gamma\in\Gamma}
\left(\gamma(z)-\frac{1}{\gamma(z)}\right)
-
\Re\left(
\gamma(i)
-\frac{1}{\gamma(i)}\right).
\end{equation*}
Recall that by \eqref{26sep05}
\begin{equation*}
m'(z)=\sum\limits_{\gamma\in\Gamma}\gamma'(z)+
\frac{\gamma'(z)}{\gamma^2 (z)}
\end{equation*}
Let $x\in\mathbb R$. Then
a finite nontangential limits $m(x)$ and $m'(x)$ exist, $m(x)$ is real,
if and only if
$$
\sum\limits_{\gamma\in\Gamma}\gamma'(x)+
\frac{\gamma'(x)}{\gamma^2 (x)}<\infty .
$$
In this case
\begin{equation*}
m'(x)=\sum\limits_{\gamma\in\Gamma}\gamma'(x)+
\frac{\gamma'(x)}{\gamma^2 (x)}.
\end{equation*}
\end{corollary}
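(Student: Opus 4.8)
The plan is to reduce the statement to the Carath\'eodory--Julia theorem on the upper half plane (Theorem~\ref{21oct06}), applied to the function $w(z)=\dfrac{m(z)-i}{m(z)+i}$, which is analytic and bounded in modulus by $1$ on $\bbC_+$, together with an elementary monotone convergence computation of the radial boundary values of $\Im m/\Im z$. This is the same mechanism used for Corollary~\ref{19oct09}, except that $m$ has positive imaginary part rather than being a Blaschke product, so the dichotomy must be applied through $w$.

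First I would record the algebra linking $m$ and $w$: $m=i\dfrac{1+w}{1-w}$, $m+i=\dfrac{2i}{1-w}$, $m'=\dfrac{2iw'}{(1-w)^2}$, and
\[
\frac{1-|w(z)|^2}{2\Im z}=\frac{2\Im m(z)}{\Im z\,|m(z)+i|^2},\qquad z\in\bbC_+ .
\]
Thus finite nontangential limits $w(x),w'(x)$ with $|w(x)|=1$ exist if and only if finite nontangential limits $m(x)$ (real) and $m'(x)$ do (note $w(x)\ne1$ is automatic once $m(x)$ is a finite limit), and then $|m'(x)|=\tfrac12|m(x)+i|^2\,|w'(x)|$. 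Second, from \eqref{26sep06} each summand of $\dfrac{\Im m(x+iy)}{y}$ increases, as $y\downarrow0$, to $\gamma'(x)$, respectively $\gamma'(x)/\gamma^2(x)$ (or to $+\infty$ if $x$ is the relevant pole), so by monotone convergence
\[
R(x):=\lim_{y\downarrow0}\frac{\Im m(x+iy)}{y}=\sum_{\gamma\in\Gamma}\Bigl(\gamma'(x)+\frac{\gamma'(x)}{\gamma^2(x)}\Bigr)\in(0,+\infty]
\]
for every $x\in\bbR$. If $x$ lies in the orbit of $0$ or of $\infty$, then $R(x)=+\infty$, the representing measure $\mu$ of $m$ has an atom at $x$, hence $m$ has a pole at $x$ and no finite limit $m(x)$, so the equivalence holds trivially; henceforth assume $x$ is not in these orbits.

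For the equivalence proper: if $R(x)<\infty$, then $|m+i|^2\ge(\Im m+1)^2\ge1$ gives $\dfrac{1-|w(x+iy)|^2}{2y}\le\dfrac{2\Im m(x+iy)}{y}\to2R(x)$, so $\liminf_{z\to x}\dfrac{1-|w(z)|^2}{2\Im z}<\infty$; Theorem~\ref{21oct06} then supplies finite nontangential limits $w(x),w'(x)$ with $|w(x)|=1$, and (once $w(x)\ne1$ is justified, see below) finite $m(x),m'(x)$. Conversely, if $m(x),m'(x)$ are finite and $m(x)$ real, then $w(x),w'(x)$ are finite with $|w(x)|=1,\ w(x)\ne1$, so $d_2:=\lim_{z\to x}\dfrac{1-|w(z)|^2}{2\Im z}<\infty$ by Theorem~\ref{21oct06}; evaluating $d_2$ radially (using $m(x+iy)\to m(x)$) gives $d_2=2R(x)/|m(x)+i|^2$, whence $R(x)<\infty$. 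For the value of $m'(x)$: since $m$ is Herglotz and $m(x)$ is real, the angular derivative satisfies $\Re m'(x)=\lim_{y\downarrow0}\Im m(x+iy)/y=R(x)$, while $|m'(x)|=\tfrac12|m(x)+i|^2\,d_2=R(x)$; therefore $m'(x)=R(x)=\sum_{\gamma\in\Gamma}\bigl(\gamma'(x)+\gamma'(x)/\gamma^2(x)\bigr)$.

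The step I expect to be the main obstacle is excluding $w(x)=1$ in the first implication, i.e.\ ruling out that $m$ blows up nontangentially at $x$ even though $R(x)<\infty$ and $x$ carries no atom of $\mu$. The point is that $w(x)=1$ together with a finite, necessarily nonzero (as $m$ is non-constant) angular derivative $w'(x)$ would force $m(z)\sim C/(z-x)$ with $C\ne0$ near $x$, i.e.\ a genuine pole; but the Stieltjes-type identity $\lim_{z\to x}(z-x)m(z)=-\mu(\{x\})=0$, valid because $\mu$ has no mass at $x$ (its atoms lying on the orbits of $0$ and $\infty$ by \eqref{26sep04}), excludes exactly this behaviour.
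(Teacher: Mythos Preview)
Your argument is correct and complete, including the handling of the case $w(x)=1$ via the absence of an atom of the representing measure at $x$. It is, however, a genuinely different route from the paper's. The paper works with the inner function $w(z)=e^{im(z)}$ rather than the Cayley transform; then $w'/w=im'$ gives $|w'(x)|=|m'(x)|$ directly with no exceptional value to exclude, and the two implications are obtained by applying Frostman's theorem (Theorem~\ref{19oct11}) to the monotone family $w_n=e^{im_n}$ built from finite partial sums $m_n$ of \eqref{26sep04}. Your approach trades this for a direct application of Carath\'eodory--Julia (Theorem~\ref{21oct06}) to $w=(m-i)/(m+i)$ together with monotone convergence on the series \eqref{26sep06} for $\Im m/\Im z$; this bypasses Frostman's theorem entirely and, via the Cayley transform, makes the existence of a \emph{finite real} nontangential limit $m(x)$ immediate once $w(x)\ne 1$ is secured---a point the paper's exponential parametrization leaves slightly less explicit. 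The cost you pay is the extra step excluding $w(x)=1$, which you dispatch correctly using $\lim_{z\to x}(z-x)m(z)=-\mu(\{x\})=0$.
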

\begin{proof}
Consider the following inner function
$$
w(z)=e^{i m(z)}.
$$
Observe that
$$
\frac{w'(z)}{w(z)}=i m'(z).
$$
Consider
\begin{equation*}
m_n(z)=\sum\limits_{k=1}^n
\left(\gamma_k(z)-\frac{1}{\gamma_k(z)}\right)
-
\Re\left(
\gamma_k(i)
-\frac{1}{\gamma_k(i)}\right)
\end{equation*}
and the corresponding inner function
$$
w_n(z)=e^{i m_n(z)}.
$$
Then
$$
\frac{w'_n(z)}{w_n(z)}=i m'_n(z).
$$
In view of formula \eqref{26sep06}, $\Im m_n(z)$ increases in $n$ for every $\Im z>0$. Therefore,
$|w_n(z)|$ decreases in $n$. If finite nontangential limits $m(x)$ and $m'(x)$ exist,
$m(x)$, then finite nontangential limits $w(x)$ and $w'(x)$ exist, $|w(x)|=1$.Therefore,
we are in the situation of Theorem \ref{19oct11}. Hence,
$$
m'(x)=\frac{1}{i}\frac{w'(x)}{w(x)}=|w'(x)|=\lim |w'_n(x)|=\lim\frac{1}{i}\frac{w_n'(x)}{w_n(x)}
$$
$$
=\lim m'_n(x)=\lim\sum\limits_{k=1}^n\gamma_k'(x)+
\frac{\gamma_k'(x)}{\gamma_k^2 (x)}
=\sum\limits_{\gamma\in\Gamma}\gamma'(x)+
\frac{\gamma'(x)}{\gamma^2 (x)}.
$$
Conversely, if
$$
\sum\limits_{\gamma\in\Gamma}\gamma'(x)+
\frac{\gamma'(x)}{\gamma^2 (x)}<\infty ,
$$
then $|w'_n(x)|=m'_n(x)$ are bounded and we are again in the situation of Theorem \ref{19oct11}.
\end{proof}

\bigskip

A. Kheifets, Department of Mathematical Sciences, University of Massachusetts Lowell, One University Ave.,
Lowell, MA 01854,USA

\emph{E-mail address:} {Alexander\underline{ }Kheifets@uml.edu}

\medskip

P. Yuditskii, Abteilung f\"ur Dynamische Systeme und Approximationstheorie, Institut f\"ur Analysis, Johannes Kepler Universit\"at Linz, A-4040 Linz, Austria

\emph{E-mail address:} {Petro.Yudytskiy@jku.at}


\begin{thebibliography}{99}

\bibitem{AV}
D. Alpay, V. Vinnikov, \emph{Indefinite Hardy spaces on finite bordered Riemann surfaces}, J. Funct. Anal. 172 (2000), no. 1, 221--248.

\bibitem{BV}
J. Ball, V. Vinnikov, \emph{Zero-pole interpolation for matrix meromorphic functions on a compact Riemann surface and a matrix Fay trisecant identity}, Amer. J. Math. 121 (1999), no. 4, 841--888.

\bibitem{BoKh}
V. Bolotnikov, A. Kheifets,  \emph{A higher order analogue of the Caratheodory-Julia theorem}, Journal of Functional Analysis, 237, no. 1 (2006), 350 - 371

\bibitem{BS}{A. Borichev, M. Sodin, }\textit{Krein's entire functions and Bernstein approximation problem},
Illinois Journal of Mathematics,  45, no. 1 (2001), 167--185.

\bibitem{Car}{C. Carath\' eodory, }\textit{Theory of Functions of a Complex Variable},
Engl. Translation, Chelsea Publishing Company, NY, 1960.

\bibitem{EYu}{A. Eremenko, P. Yuditskii, }  {\em Comb functions.} Recent advances in orthogonal polynomials, special functions, and their applications, Contemp. Math.,
{\bf 578} (2012), 99--118.

\bibitem{Fro}
O. Frostman, \emph{Sur les produits de Blaschke}, Kungl. Fysiografiska S\"alskapets I Lund F\"orhandlingar,
12, no.15 (1943), 169 - 182.

%
\bibitem{Hasu} M. Hasumi,
\emph{Hardy Classes on Infintely Connected Riemann Surfaces},
LNM \textbf{1027}, Springer, New York, Berlin, 1983.

\bibitem{Kh}
A. Kheifets, \emph{Abstract interpolation problem and some applications}, Lecture notes given in framework of Holomorphic Spaces semester, fall 1995, in: Holomorphic Spaces (S. Axler, J. McCarthy, D. Sarason editors), MSRI Publications, 33 (1998) 351-381, Berkeley, California.

\bibitem{Koo}
P.~Koosis, \emph{The logarithmic integral. {I}}, Cambridge Studies in Advanced
  Mathematics, vol.~12, Cambridge University Press, Cambridge, 1998, Corrected
  reprint of the 1988 original.



\bibitem{Nik}
N.~K. Nikolskii, \emph{Treatise on the shift operator}, A Series of
  Comprehensive Studies in Mathematics, Spriger-Verlag, Berlin Heidelberg New
  York Tokyo, 1986.

\bibitem{Lev}
B. Ya. Levin, \textit{Majorants in classes of subharmonic functions, II, The relation between majorants and conformal mapping, III, The classification of the closed sets on $\bbR$ and the representation of the majorants}, Teor. Funktsii Funktsional. Anal. i Prilozhen. 52 (1989), 3--33; English translation: J. Soviet Math. 52 (1990), 3351--3372.

\bibitem{LKMV}
M. S. Liv\v sic, N. Kravitsky, A. S. Markus, V. Vinnikov,
\textit{Theory of Commuting Nonselfadjoint Operators}, Mathematics and Its Applications, 332, Kluwer Academic, Dordrecht (1995)

\bibitem{Mar}
V.~Marchenko, \emph{Sturm-Liouville operators and applications}, Birkh\"auser Verlag, Basel, 1986.

\bibitem{PF}
B. S. Pavlov, S. I. Fedorov, \textit{Shift group and harmonic analysis on a Riemann surface of genus one}, Algebra i Analiz, 1:2 (1989), 132--168; Leningrad Math. J., 1:2 (1990), 447--490


\bibitem{Pom} Ch. Pommerenke, \textit{On the Green's function of Fuchsian groups,}
Ann. Acad. Sci. Fenn., 2 (1976), 409-427.



\bibitem{SY} M.~Sodin and P.~Yuditskii,
{\it Almost periodic Jacobi matrices with homogeneous spectrum, infinite-dimensional Jacobi
inversion, and Hardy spaces of character-automorphic functions},
J.\ Geom.\ Anal. {\bf 7} (1997), 387--435.



\bibitem{VYis}
A. Volberg and P. Yuditskii, \textit{
On the inverse scattering problem for Jacobi matrices with the spectrum on an interval, a finite system of Intervals or a Cantor set of positive length},
Communications in Mathematical Physics, {\bf 226} (2002), 567--605.

\bibitem{VY}
A. Volberg and P. Yuditskii, \textit{Kotani-Last problem and Hardy spaces on surfaces of Widom type}. Invent. Math., 197 (2014), No. 3, 683-740.

\bibitem{VYM}
A. Volberg and P. Yuditskii, \textit{Mean type of functions of bounded characteristic and Martin functions in Denjoy domains}, Adv. in Math.,  290 (2016), 860--887.

%
\bibitem{Widom} H.~Widom,
{\it Extremal polynomials associated with a system of curves in the complex plane},
Adv. in Math., {\bf 3} (1969), 127--232.

%
\bibitem{Widom71} H.~Widom,
{\it $H^p$ sections of vector bundles over Riemann surfaces},
Ann. of Math., {\bf 94} (1971), 304--324.



\bibitem{You}
J. You,
\textit{Quantitative almost reducibility and its applications}, Proc. Int. Cong. of Math. - 2018, Rio de Janeiro, Vol. 2, 2107--2128.


\end{thebibliography}
 \end{document}